\newcommand{\id}{\operatorname{id}}
\newcommand{\supp}{\operatorname{supp}}
\newcommand{\cA}{{\mathcal A}}
\newcommand{\cB}{{\mathcal B}}
\newcommand{\cF}{{\mathcal F}}
\newcommand{\cG}{{\mathcal G}}
\newcommand{\cH}{{\mathcal H}}
\newcommand{\cK}{{\mathcal K}}
\newcommand{\cO}{{\mathcal O}}
\newcommand{\cP}{{\mathcal P}}
\newcommand{\cM}{{\mathcal M}}
\newcommand{\cMG}{\cM^{\scriptscriptstyle G}}
\newcommand{\cS}{{\mathcal S}}
\newcommand{\LL}{{\mathscr L}}
\newcommand{\BB}{{\mathscr B}}
\newcommand{\R}{{\mathbbm R}}
\newcommand{\N}{{\mathbbm N}}
\newcommand{\Z}{{\mathbbm Z}}
\newcommand{\T}{{\mathbbm T}}
\newcommand{\inn}{\operatorname{int}}
\newcommand{\card}{\operatorname{card}}
\newcommand{\hx}{{\hat{x}}}
\newcommand{\hy}{{\hat{y}}}
\newcommand{\hX}{{\hat{X}}}
\newcommand{\hXprime}{{\widehat{X'}}}
\newcommand{\hT}{\hat{T}}
\newcommand{\hTprime}{\widehat{T'}}
\newcommand{\nuW}{\nu_{\scriptscriptstyle W}}
\newcommand{\mystrut}{}
\newcommand{\nuWbar}{\nu_{\scriptscriptstyle \overline{W\mystrut}}}
\newcommand{\nuWprime}{\nu_{\scriptscriptstyle W'}}
\newcommand{\nuWreg}{\nu_{\scriptscriptstyle W_{reg}}}
\newcommand{\nuWG}{\nu_{\scriptscriptstyle W}^{\scriptscriptstyle G}}
\newcommand{\nuWGbar}{\nu_{\scriptscriptstyle \overline{W\mystrut}}^{\scriptscriptstyle G}}
\newcommand{\nuWGprime}{\nu_{\scriptscriptstyle W'}^{\scriptscriptstyle G}}
\newcommand{\nuWGzero}{\nu_{\scriptscriptstyle W_0}^{\scriptscriptstyle G}}
\newcommand{\GnuW}{{\cG\nuW}}
\newcommand{\MW}{\cM_{\scriptscriptstyle W}}
\newcommand{\MWbar}{\cM_{\scriptscriptstyle \overline{W\mystrut}}}
\newcommand{\MWGbar}{\cM_{\scriptscriptstyle \overline{W\mystrut}}^{\scriptscriptstyle G}}
\newcommand{\MWreg}{\cM_{\scriptscriptstyle W_{reg}}}
\newcommand{\MWprime}{\cM_{\scriptscriptstyle W'}}
\newcommand{\MWzero}{\cM_{\scriptscriptstyle W_0}}
\newcommand{\GMW}{\cG\cM_{\scriptscriptstyle W}}
\newcommand{\MWG}{\cM^{\scriptscriptstyle G}_{\scriptscriptstyle W}}
\newcommand{\MWGprime}{\cM^{\scriptscriptstyle G}_{\scriptscriptstyle W'}}
\newcommand{\MWGzero}{\cM^{\scriptscriptstyle G}_{\scriptscriptstyle W_0}}
\newcommand{\PG}{P^{\scriptscriptstyle G}}
\newcommand{\piG}{\pi^{\scriptscriptstyle G}}
\newcommand{\piH}{\pi^{\scriptscriptstyle H}}
\newcommand{\piHprime}{\pi^{\scriptscriptstyle H'}}
\newcommand{\piGH}{\pi^{\scriptscriptstyle G\times H}}
\newcommand{\pihX}{\pi^{\scriptscriptstyle \hX}}
\newcommand{\CW}{C_{\scriptscriptstyle W}}
\newcommand{\CWbar}{C_{\scriptscriptstyle \overline{W\mystrut}}}
\newcommand{\CWzero}{C_{\scriptscriptstyle W_0}}
\newcommand{\0}{\underline{0}}
\newcommand{\QM}{Q_{\scriptscriptstyle W}}
\newcommand{\QMreg}{Q_{\scriptscriptstyle W_{reg}}}
\newcommand{\QMG}{Q_{\scriptscriptstyle W}^{\scriptscriptstyle G}}
\newcommand{\QMGbar}{Q_{\scriptscriptstyle \overline{W}}^{\scriptscriptstyle G}}
\newcommand{\QMGprime}{Q_{\scriptscriptstyle W'}^{\scriptscriptstyle G}}
\newcommand{\oneone}{1-1\xspace}
\newcommand{\dL}{\mathrm{dens}(\LL)}
\newcommand{\nuG}{\nu^{\scriptscriptstyle G}}
\newcommand{\KW}{\cK_{\scriptscriptstyle W}}
\newcommand{\SH}{\cS_{\scriptscriptstyle H}}
\newcommand{\ocl}[1]{\overline{\cO(#1)}}
\newcommand{\WP}{W_P}
\newcommand{\Mmin}{\cM_{\scriptstyle min}}
\newcommand{\MminG}{\cM^{\scriptscriptstyle G}_{\scriptstyle min}}
\newtheorem{theoremI}{Theorem}
\newtheorem{theoremII}{Theorem}
\newtheorem{theoremIprime}{Theorem}
\newtheorem{theoremIIprime}{Theorem}
\newtheorem{fact}{Fact}
\newtheorem {definition}{Definition}[section] 
\newtheorem {lemma}[definition]{Lemma}
\newtheorem {bemerkung}[definition]{Remark}
\newtheorem{proposition}[definition]{Proposition}
\newtheorem {corollary}[definition]{Corollary}
\newtheorem{beispiel}[definition]{Example}
\newtheorem{frage}[definition]{Problem}
\newenvironment{remark} {\begin{bemerkung} \normalfont }{\end{bemerkung}}
\begin{document}

\title{Periods and factors of weak model sets}
\author{Gerhard Keller and Christoph Richard\;
\thanks{These notes profited enormously from the stimulating research atmosphere at the workshop ``Combining Aperiodic Order with Structural Disorder'' at the Lorentz Center, Leiden, 2016.}}
\affil{Department Mathematik, Universit\"at Erlangen-N\"urnberg}
\date{\today}


\maketitle


\begin{abstract}
There is a renewed interest in weak model sets due to their connection to $\cB$-free systems \cite{BKKL2015}, which emerged from Sarnak's program on the M\"obius disjointness conjecture. Here we continue our recent investigation \cite{KR2015} of the extended hull $\MWG$, a dynamical system naturally associated to a weak model set in an abelian group $G$ with relatively compact window $W$. For windows having a nowhere dense boundary (this includes compact windows), we identify the maximal equicontinuous factor of $\MWG$ and give a sufficient condition when $\MWG$ is an almost \oneone extension of its maximal equicontinuous factor. 
If the window is  measurable with positive Haar measure and is almost compact, then the system $\MWG$ equipped with its Mirsky measure is isomorphic to its Kronecker factor. For general nontrivial ergodic probability measures on $\MWG$, we provide a kind of lower bound for the Kronecker factor.
All relevant factor systems are natural $G$-actions on quotient subgroups of the torus underlying the weak model set. These are obtained by factoring out suitable window periods. Our results are specialised to  the usual hull of the weak model set, and they are also interpreted for $\cB$-free systems. 
\end{abstract}

\renewcommand{\thefootnote}{\fnsymbol{footnote}} 
\footnotetext{\emph{Key words:} cut-and-project scheme, (weak) model set, torus parametrisation, Mirsky measure, $\BB$-free systems}
\footnotetext{\emph{MSC 2010 classification:} 52C23, 37A25, 37B10, 37B50 }     
\renewcommand{\thefootnote}{\arabic{footnote}} 

\section{Introduction}

Fix two  locally compact second countable abelian groups $G$ and $H$. Typically, $G=\Z^d$ or $\R^d$, whereas $H$ will often be a more general group. Take a cocompact lattice $\LL\subseteq G\times H$ in generic position, i.e., 
$\LL$ projects injectively to $G$ and densely to $H$. Consider a  relatively compact and measurable subset $W$ of $H$ which is called the window. A weak model set $\Lambda\subset G$ is obtained by projecting all lattice points inside the strip $G\times W$ to $G$. 
The resulting set $\Lambda=\piG(\LL\cap(G\times W))$ is also called ``cut-and-project set'', and $H$ is  called ``internal space''.  Any weak model set is  uniformly discrete. Model sets additionally require $\inn(W)\ne\emptyset$, resulting in a relatively dense point set. They have been introduced by Meyer \cite{Meyer70,Meyer72} within a harmonic analysis context and, surprisingly, turned out later to describe physical quasicrystals. By now there is an abundant literature on model sets, see e.g.~the list of references in \cite{BaakeGrimm13}. Weak model sets have been initially studied by Schreiber \cite{Schreiber71,Schreiber73}. Their name was coined by Moody \cite{Moody2002}, see \cite{HuckRichard15} for further background.

Dynamical systems techniques have 
turned out to be a powerful tool to analyse model sets \cite{FHK2002, Schlottmann00, Robinson2007, BLM07, LenzRichard07}. Here one considers the hull of a model set, i.e., its translation orbit closure with respect to a Hausdorff-type metric, and one seeks to infer properties of the model set from its hull. For example, pure point diffraction spectrum of a model set can be inferred from (and is in fact equivalent to) pure point dynamical spectrum of its hull, equipped with its pattern frequency measure, see e.g.~\cite[Thm.~7]{BaakeLenz2004}. For general so-called regular model sets, pure point dynamical spectrum of their hull was shown by Schlottmann \cite{Schlottmann00}\,\footnote{An earlier diffraction result by Hof \cite{Hof1} for regular model sets having Euclidean internal space relied on harmonic analysis techniques. This has been extended beyond the Euclidean setting only recently \cite{RS17}.}. It is the aim of this article to perform a dynamical analysis for general weak model sets having a compact or close-to-compact window, thereby refining recent results \cite{BaakeHuck2015, KR2015}.

A natural dynamical question concerns the relation of the model set hull to its maximal equicontinuous factor and to its Kronecker factor, equipped with the  pattern frequency measure. Since model sets inherently display a high degree of regularity, one is tempted to expect almost isomorphisms to these factors, under mild assumptions on the window. One might also expect that these factors are isomorphic to the ``torus'' $\hX=(G\times H)/\LL$.   In this context, previous topological standard assumptions on the window were compactness, topological regularity $W=\overline{\inn(W)}$  and aperiodicity, i.e., $h+W=W$ implies $h=0$, compare the discussion in \cite[Sec.~4.3]{KR2015}. Note that, in Euclidean internal space, any window is aperiodic. Given these assumptions, the hull is an almost one-to-one extension of its maximal equicontinuous factor. This was shown in \cite{Robinson2007} for so-called non-singular model sets. For measure-theoretic results, a previous additional assumption was almost vanishing boundary of the window, resulting in a uniquely ergodic hull. In that case, isomorphism to the Kronecker factor has been shown in \cite{Schlottmann00, Robinson2007}. In all cases, the relevant factor is indeed the underlying torus $\hX$, and a factor map is provided by the so-called torus parametrisation \cite{BHP97, BLM07}, i.e., a natural continuous map which assigns to any element of the hull its torus coordinate.

However, classic model sets such as the Fibonacci chain ($G=\R$ and $H=\mathbb R$, see \cite[Ex.~7.3]{BaakeGrimm13}) and their generalizations, the Sturmian chains, and also the discrete counterparts of these sets, namely Fibonacci and Sturmian sequences ($G=\Z$ and $H=\T$, see \cite[Ch.~6]{PytheasFogg})
are not subsumed by the above results, as their windows are half-open intervals. Compact internal spaces different from $\T^d$ arise for other model sets which are subsets of lattices. Early examples are squarefree integers and visible lattice points \cite{BMP00}. These are not subsumed by the above results either, as their compact aperiodic windows have no interior points. This results in point sets having arbitrarily large holes.
Other non-standard examples are non-regular Toeplitz sequences, having their odometer as internal space \cite{BJL15}, and certain model sets having a window with fat boundary in Euclidean internal space \cite{JLO18}.

The hull of squarefree integers is also of interest in number theory, as it is a factor of the M\"obius function flow. It can thus be used to understand elementary properties of the M\"obius function. This was made explicit in Sarnak's influential  article \cite{Sarnak2011}. The same idea applies to more general 
$\cB$-free systems \cite{BKKL2015}, which were analysed from a dynamical perspective using arguments of arithmetic nature. On the other hand, $\cB$-free systems are weak model sets with compact windows that may or may not have interior points, see subsection \ref{subsec:Bfree}. Thus one might suspect that results about $\cB$-free systems admit extensions to a suitable class of weak model sets and that, vice versa, a weak model set analysis could shed some additional light on $\cB$-free systems from a geometric perspective.

Let us mention two recent approaches along these lines. In \cite{BaakeHuck2015},  squarefree integer hull results from \cite{Sarnak2011} were formulated and proved for visible lattice points. This was extended to a larger class of weak model sets in \cite{BaakeHuckStrungaru15}. Given a condition called maximal density, pure point diffractivity was shown for such weak model sets. As maximal density is satisfied for model sets with windows having an almost vanishing boundary, this generalises previous results about pure point diffraction. This was then used to infer pure point dynamical spectrum for the hull equipped with the pattern frequency measure. The proof used approximation by regular model sets, a technique inspired by \cite{HuckRichard15}.

In \cite{KR2015}, the torus parametrisation was systematically re-investigated for weak model sets having compact windows. Results for the hull were deduced
from a larger dynamical system $\MWG$, which one may call the \textit{extended hull}. It is the translation orbit closure of the set of model sets arising from any window translate.
Whereas this approach was already used in \cite{Robinson2007}, properties of the extended hull were inferred from a related extended hull $\MW$, where model sets are viewed as lattice subsets in $G\times H$. Under transparent assumptions on the window, the above almost isomorphisms were quite easily deduced. Properties of $\MWG$ could then systematically be deduced from those of $\MW$ via a continuous factor map $\piG_*$, which describes the projection to $G$. If the above topological standard assumptions on the window are satisfied, then the map $\piG_*$ is indeed a homeomorphism \cite[Prop.~4.8]{KR2015}, so no information is lost. 
Measure-theoretic results were obtained for the Mirsky measure $\QMG$ on $\MWG$. This measure is the lift of the Haar measure $m_\hX$ on $\hX$ to $\MWG$ and agrees with the pattern frequency measure \cite[Rem.~3.12]{KR2015}. 
As the extended hull $(\MWG, \QMG)$ is a measure-theoretic factor of the torus, pure point dynamical spectrum follows immediately \cite[Thm.~2]{KR2015}\,\footnote{The corresponding factor map $\nuWG$ was already systematically used for weighted model sets in \cite{LenzRichard07}, where it is continuous.}. These results were then transferred to the usual hull of a model set. For measure-theoretic results the above maximal density condition turned out to be a sufficient condition ensuring isomorphism to the extended hull \cite[Thm.~5]{KR2015}.

In this article, the nature of $\piG_*$ is studied more systematically. This leads to a rather complete analysis of the maximal equicontinuous and Kronecker factors. For an initial discussion, let us restrict to compact windows. One of our results states that  $\piG_*$ is a homeomorphism whenever $\inn(W)$ is aperiodic. Hence for topological results the periods of $\inn(W)$ appear to play a central role. Indeed we obtain almost isomorphism to the maximal equicontinuous factor of $\MWG$ by factoring from the group $\hX$ the periods of $\inn(W)$. These results transfer easily to the usual hull, see Theorem~\ref{theo:inn-periodic} and also Theorem~\ref{theo:inn-periodicprime} for non-compact windows. If the window has no interior point, the maximal equicontinuous factor is trivial.
For measure-theoretic results, we first restrict to the Mirsky measure $\QMG$ on $\MWG$. Here a central role is played by the so-called Haar periods of $W$, i.e., by those $h\in H$ satisfying $m_H((h+W)\triangle W)=0$. Indeed the dynamical system $(\MWG,\QMG)$ is isomorphic to its Kronecker factor, which is obtained from the torus $\hX$ by factoring out the Haar periods of $W$, compare Theorem~\ref{theo:main-2-periodic}.  Given the maximal density condition, these results can be transferred to the usual hull. For any ergodic invariant probability measure $\PG$ which is not supported on the zero configuration, we
provide a kind of lower bound for the Kronecker factor of $(\MWG,\PG)$ in Theorem~\ref{theo:main-periodic}.

Let us interpret the above results in terms of diffraction of a weak model set of maximal density. 
The Bragg peaks of the weak model set generate a countable discrete group \cite[Thm.~9]{BaakeLenz2004}, whose dual is isomorphic to the Kronecker factor of its hull equipped with the Mirsky 
measure, see \cite[Sec.~7]{Lenz2009} for details. This Kronecker factor is determined in Theorem~\ref{theo:main-2-periodic}.
The amplitudes of the Bragg peaks appear as  squared norms of corresponding eigenfunctions \cite[Cor.~2]{Lenz2009}.
The maximal equicontinuous factor of the hull, which is determined in Theorem~\ref{theo:inn-aperiodic}, can be identified with the subgroup of eigenvalues which have continuous eigenfunctions. Apparently, the nature of the eigenfunctions is related to the different types of window periods.  The amplitude of a Bragg peak can be computed as a certain average over the corresponding eigenvalue \cite[Thm.~5(b)]{Lenz2009}. It would be interesting to analyse how continuity -- or weakened versions thereof as in \cite{Keller16} -- affect the type of convergence of these averages, compare \cite[Thm.~5(c)]{Lenz2009}.

Our paper is organised as follows. After the setting has been explained in Section~\ref{sec:setting}, we give formally precise statements of our results in Section~\ref{ch:results}. There we also discuss applications of the results to $\cB$-free systems. A more detailed discussion about non-trivial topological behaviour in $\cB$-free systems appears in  \cite{KKL2016}. Section~\ref{sec:proof-1} studies the question of reconstructing from a given weak model set a suitable window. This leads to a proof of Theorem~\ref{theo:inn-aperiodic}, assuming that $\inn(W)$ is aperiodic. Proofs of measure-theoretic statements for aperiodic windows are then provided in Section~\ref{sec:proof-2-3}. As a preparation for the proofs of the remaining statements, period groups and quotient cut-and-project schemes are studied in Section~\ref{sec:periods}. The following section contains the proofs of Theorems~\ref{theo:inn-periodic}, \ref{theo:main-2-periodic} and \ref{theo:main-periodic}. The final section discusses relatively compact windows whose associated dynamical systems behave very similarly to the ones with compact windows.

\section{The setting}\label{sec:setting}

The following point of view on extended weak model sets was developed in \cite{KR2015}.

\subsection{Assumptions and notations}\label{assnot}
Certain spaces and mappings are needed for the construction of weak model sets. 
As in \cite{KR2015} we make the
following general assumptions.
\begin{enumerate}[(1)]
\item $G$ and $H$ are \emph{locally compact second countable abelian groups} with Haar measures $m_G$ and $m_H$. Then the product group $G\times H$ is locally compact second countable abelian as well, and we choose $m_{G\times H}=m_G\times m_H$ as Haar measure on $G\times H$. 
\item $\LL\subseteq G\times H$ is a \emph{cocompact lattice}, i.e., a discrete subgroup whose quotient space $(G\times H)/\LL$ is compact. Thus $(G\times H)/\LL$ is a compact second countable abelian group.
Denote by $\piG:G\times H\to G$ and $\piH:G\times H\to H$ 
the canonical projections. We assume that 
$\piG|_\LL$ is \oneone and that
$\piH(\LL)$ is dense in $H$.\footnote{Denseness of $\piH(\LL)$ can be assumed without loss of generality by passing from $H$ to the closure of $\piH(\LL)$. In that case $m_H$ must be replaced by $m_{\,\overline{\piH(\LL)}}$.} 
\item $G$ acts on $G\times H$ by translation: $T_gx=(g,0)+x$.
\item Let $\hX:=(G\times H)/\LL$.  As we assumed that $\hX$ is compact, there is a measurable relatively compact fundamental domain $X\subseteq G\times H$ such that $x\mapsto x+\LL$ is a bijection between $X$ and $\hX$. Elements of $G\times H$ (and hence also of $X$) are denoted as $x=(x_G,x_H)$, elements of $\hX$ as 
$\hx$ or as $x+\LL=(x_G,x_H)+\LL$, when a representative $x$ of $\hx$ is to be stressed. We normalise the Haar measure $m_\hX$ on $\hX$ such that $m_\hX(\hX)=1$. Thus $m_\hX$ is a probability measure. 

\item The \emph{window} $W$ is a measurable relatively compact subset of $H$. 
For our topological dynamical results, we first assume that the window $W$ is indeed compact and discuss extensions of the results to certain non-compact windows in Section~\ref{sec:rcwin}. 
Our purely measure-theoretic results are first stated and proved for compact windows as well, but they extend easily to windows which agree modulo Haar measure zero with a compact one. Some further measure-theoretic results, which have an additional topological aspect, are only proved for compact windows.
\end{enumerate}

\subsection{Consequences of the assumptions}\label{en:ass}
We list a few facts from topology and measure theory that follow from the above assumptions.  We will call any neighborhood of the neutral element in an abelian topological group a \textit{zero neighborhood}.

\begin{enumerate}[(1)]
\item Being locally compact second countable abelian groups, $G$, $H$ and $G\times H$  are  metrisable with a translation invariant metric with respect to which they are complete metric spaces. In particular they have the Baire property.
As such groups are $\sigma$-compact, $m_G$, $m_H$ and $m_{G\times H}$ are $\sigma$-finite.
\item As $G\times H$ is $\sigma$-compact, the lattice $\LL\subseteq G\times H$ is at most countable.  Note that $G\times H$ can be partitioned by shifted copies of the relatively compact fundamental domain $X$. This means that $\LL$ has a positive finite point density $\dL=1/m_{G\times H}(X)$. We thus have $m_\hX(\hat A)=\dL\cdot m_{G\times H}(X\cap (\pihX)^{-1}(\hat A))$ for any measurable $\hat A\subseteq \hX$, where $\pihX: G\times H\to \hX$ denotes the quotient map. As a factor map between topological groups, $\pihX$ is continuous and open.
\item\label{item2.2:4} 
The action $\hT_g:\hx\mapsto(g,0)+\hx$ of $G$ on $\hX$ is minimal. 
This implies that $\hX$ with its natural action is uniquely ergodic, see e.g.~\cite[Prop.~1]{Moody2002}.

\item Denote by $\cM$ and $\cMG$ the spaces of all locally finite measures on the Borel subsets of $G\times H$ and $G$, respectively. They are endowed with the topology of vague convergence and hence compact spaces. 
As $G$ and $G\times H$ are complete metric spaces, this topology is Polish, see \cite[Thm.~A.2.3]{Kallenberg2001}. 
\end{enumerate}

\subsection{The objects of interest}
The pair $(\LL,W)$ assigns to any point $\hx\in \hX$ a discrete point set in $G\times H$. 
We will identify such point sets $P$ with the measure $\sum_{y\in P}\delta_y\,\in\cM$
and call these objects \emph{configurations}. More precisely:
\begin{enumerate}[(1)]
\item For $\hx=x+\LL\in\hX$ define the configuration
\begin{equation}\label{eq:nuW-def}
\nuW(\hx):=\sum_{y\in (x+\LL)\cap(G\times W)}\delta_y\ .
\end{equation}
It is important to understand $\nuW$ as a map from $\hX$ to $\cM$. If $W$ is compact, the map $\nuW$ is upper semicontinuous \cite[Prop.~3.3]{KR2015}.
The canonical projection $\piG:G\times H\to G$ projects measures $\nu\in\cM$ to measures $\piG_*\nu$ on $G$ defined by $\piG_*\nu(A):=\nu((\piG)^{-1}(A))$. We abbreviate 
\begin{equation}\label{eq:nuWG-def}
\nuWG:=\piG_*\circ\nuW:\hX\to\cMG \ .
\end{equation}
\item\label{item:spaces}
 Denote by $\MW$ the vague closure of $\nuW(\hX)$ in $\cM$, and by $\MWG$ the vague closure of $\nuWG(\hX)$ in $\cMG$.
The group $G$ acts continuously by translations on all these spaces:
$(S_g\nu)(A):=\nu(T_g^{-1}A)=\nu(T_{-g}A)$.
Here we used the same notation  $S_g$ for translations on $\MW$ and $\MWG$, as the meaning will always be clear from the context. 
\item As $\nuW(\hx)(T_{-g}A)=(S_g\nuW(\hx))(A)=\nuW(\hT_g\hx)(A)$, it is obvious that all $\nuW(\hx)$ are uniformly translation bounded, and it follows from \cite[Thm.~2]{BaakeLenz2004} that all four spaces from item (\ref{item:spaces}) are compact.
\item $\QM:=m_\hX\circ {\nuW}^{-1}$ and 
$\QMG:=m_\hX\circ (\nuWG)^{-1}$ are called Mirsky measures on 
$\MW$ and $\MWG$, respectively. Note that $\QMG=\QM\circ (\piG_*)^{-1}$.
\footnote{These measures were denoted $Q_{\cM}$ resp. $Q_{\cM^G}$ in \cite{KR2015}.}
\end{enumerate}

\subsection{Previous results}

For compact windows, Mirsky measures on $\MW$ or on $\MWG$ were studied in quite some detail in \cite{KR2015}. The following property is immediate from measurability of the map $\nuW:\hX\to\MW$ and from the definition of the Mirsky measure $\QM$ on $\MW$.

\medskip

\begin{proposition}\label{prop:MWfactor}
$(\MW,\QM,S)$ is a measure-theoretic factor of $(\hX,m_\hX,\hT)$. \qed
\end{proposition}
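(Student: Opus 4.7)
The plan is to exhibit $\nuW\colon \hX \to \MW$ itself as the factor map. Three things need verification: (i) $\nuW$ is Borel measurable; (ii) $\nuW$ intertwines the $G$-actions, $\nuW \circ \hT_g = S_g \circ \nuW$ for every $g \in G$; and (iii) $\nuW$ pushes $m_\hX$ forward to $\QM$. Item (iii) is built into the very definition $\QM := m_\hX \circ \nuW^{-1}$ and reduces to (i), which ensures the composition makes sense.

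For the equivariance in (ii), I would just expand definitions as already sketched in item~2.3(3). For any Borel $A \subseteq G\times H$,
\begin{equation*}
(S_g\nuW(\hx))(A) \;=\; \nuW(\hx)(T_{-g}A) \;=\; \nuW(\hT_g\hx)(A),
\end{equation*}
since $T_{-g}$ acts trivially on the $H$-coordinate and therefore preserves the strip $G\times W$, while shifting a representative $x$ of $\hx$ by $(g,0)$ in \eqref{eq:nuW-def} produces precisely $\nuW(\hT_g\hx)$.

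The remaining task is measurability. When $W$ is compact this is essentially free: $\nuW$ is upper semicontinuous as a map $\hX\to\cM$ by \cite[Prop.~3.3]{KR2015}, and upper semicontinuous maps between Polish spaces are Borel; the range lies in $\MW$ by definition of $\MW$ as the vague closure of $\nuW(\hX)$. For a merely measurable $W$ I would reduce to the evaluation maps that generate the vague topology on $\cM$: it suffices to check that $\hx\mapsto \int f\,d\nuW(\hx) = \sum_{y\in(x+\LL)\cap(G\times W)} f(y)$ is Borel for every $f \in C_c(G\times H)$. Uniform translation boundedness (item~2.3(3)) bounds the number of relevant summands in terms of the support of $f$, and each summand is measurable in $\hx$ by a standard argument using a relatively compact fundamental domain $X$ for $\hX$ together with measurability of $W$.

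The only genuine obstacle is the measurability in (i) for a general measurable window; once it is in place, (ii) and (iii) are immediate consequences of the definitions, which is why the paper flags this proposition as ``immediate''.
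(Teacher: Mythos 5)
Your proposal is correct and follows exactly the route the paper intends: the paper declares the result immediate from measurability of $\nuW$ and the definition $\QM=m_\hX\circ\nuW^{-1}$, and you have simply made explicit the three ingredients (measurability, the equivariance identity already recorded in item~2.3(3), and the push-forward) that this one-line justification compresses. Your correct observation that measurability of $\nuW$ for a merely measurable window is the only point requiring real work matches the paper, which defers precisely that point to \cite[Prop.~3.3]{KR2015} and \cite[Rem.~3.16]{KR2015}.
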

In this article, we aim at statements concerning general invariant probability measures on $\MW$ or on $\MWG$. This is achieved using a partial inverse of $\nuW$: Denote by $\0\in\cM$  the zero measure (``empty configuration''). We have $\0\in\MW$ if and only if $\inn(W)=\emptyset$ by \cite[Prop.~3.3]{KR2015}.
Recall from \cite[Lem.~5.4]{KR2015} that, for each $\nu\in\MW\setminus\{\0\}$, there is a unique
$\hat\pi(\nu)\in\hX$, its ``torus parameter'', such that $\supp(\nu)\subseteq\supp(\nuW(\hat\pi(\nu)))$.
This yields a continuous map $\hat\pi: \MW\setminus\{\0\}\to\hX$, and we have $\hat\pi \circ\nuW=id_\hX$ whenever this composition is well defined,  compare \cite[Lem.~5.6]{KR2015}.

\medskip

 The following observation is a measure-theoretic analogue to Theorem 1a in \cite{KR2015}. Its proof, which is already implicit in the proof of \cite[Thm.~2a]{KR2015}, will be given in Subsection~\ref{subsec:mtr}.
\begin{proposition}\label{prop:factor}
Assume that $P$ is any $S$-invariant probability measure on $\MW$ satisfying $P(\MW\setminus \{\0\})=1$. Then $m_H(W)>0$, and  $(\MW,P,S)$ is a measure-theoretic extension of $(\hX,m_\hX,\hT)$. 
\end{proposition}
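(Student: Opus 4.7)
My plan is to construct a measure-preserving equivariant factor map from $(\MW, P, S)$ onto $(\hX, m_\hX, \hT)$ using the torus parametrisation $\hat\pi$, and then derive $m_H(W)>0$ from the fact that this push-forward has full support.

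First I would verify that $\hat\pi:\MW\setminus\{\0\}\to\hX$ is $G$-equivariant, i.e.\ $\hat\pi\circ S_g=\hT_g\circ\hat\pi$ on its domain. The identity $S_g\nuW(\hx)=\nuW(\hT_g\hx)$ recorded just after \eqref{eq:nuW-def} yields
$$\supp(S_g\nu)=T_g\supp(\nu)\subseteq T_g\supp(\nuW(\hat\pi(\nu)))=\supp(\nuW(\hT_g\hat\pi(\nu))),$$
so the uniqueness characterisation of the torus parameter (\cite[Lem.~5.4]{KR2015}) forces $\hat\pi(S_g\nu)=\hT_g\hat\pi(\nu)$. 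Since $P(\{\0\})=0$ and $\hat\pi$ is continuous on $\MW\setminus\{\0\}$, the push-forward $P':=\hat\pi_*P$ is a well-defined Borel probability measure on $\hX$; equivariance together with $S$-invariance of $P$ makes $P'$ a $\hT$-invariant probability measure. By unique ergodicity of the translation action on $\hX$ (Subsection~\ref{en:ass}, item~(\ref{item2.2:4})), $P'=m_\hX$. Thus $\hat\pi$ realises $(\MW,P,S)$ as a measure-theoretic extension of $(\hX,m_\hX,\hT)$.

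To establish $m_H(W)>0$, note that the defining property $\supp(\nu)\subseteq\supp(\nuW(\hat\pi(\nu)))$ gives $\nu\ne\0\Rightarrow\nuW(\hat\pi(\nu))\ne\0$, hence
$$1=P(\MW\setminus\{\0\})\le P\bigl(\hat\pi^{-1}\{\hat x:\nuW(\hat x)\ne\0\}\bigr)=m_\hX\bigl(\{\hat x:\nuW(\hat x)\ne\0\}\bigr).$$
On the other hand, a Campbell-type computation based on the fact that $\{X+\ell:\ell\in\LL\}$ tiles $G\times H$ gives, for every compact $K\subseteq G$,
$$\int_\hX\nuW(\hat x)(K\times W)\,dm_\hX(\hat x)=\dL\cdot m_G(K)\cdot m_H(W).$$
If $m_H(W)=0$, the right-hand side vanishes, so $\nuW(\hat x)(K\times W)=0$ for $m_\hX$-almost every $\hat x$. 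Exhausting $G$ by an increasing sequence of compacta and using $\supp(\nuW(\hat x))\subseteq G\times W$ yields $\nuW(\hat x)=\0$ for $m_\hX$-almost every $\hat x$, contradicting the preceding displayed inequality.

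The only step needing genuine care is the equivariance of $\hat\pi$, but the support-based uniqueness built into the torus parametrisation makes it essentially automatic; the rest is a clean combination of unique ergodicity on $\hX$ with a standard lattice intensity calculation.
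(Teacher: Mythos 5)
Your proof is correct and follows essentially the same route as the paper: the factor map is $\hat\pi$, the pushforward $\hat\pi_*P$ is identified with $m_\hX$ by unique ergodicity, and $m_H(W)>0$ is deduced from the fact that $m_H(W)=0$ would force $\nuW(\hx)=\0$ for $m_\hX$-a.e.\ $\hx$, contradicting $m_\hX\{\hx:\nuW(\hx)\neq\0\}=1$. The only differences are that you spell out the $G$-equivariance of $\hat\pi$ explicitly and replace the paper's citation of \cite[Prop.~3.6b]{KR2015} by a self-contained (and correct) Campbell-type first-moment computation over the fundamental domain.
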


Specialising to the Mirsky measure, we can combine the above two propositions and recover the following result. For the convenience of the reader, its proof will be given in Subsection~\ref{subsec:mtr}.

\begin{proposition}[Theorem 2a in \cite{KR2015}]\label{prop:mirsky}
Assume that $m_H(W)>0$. Then $(\MW,\QM,S)$ is measure-theoretically isomorphic to $(\hX,m_\hX,\hT)$.
\end{proposition}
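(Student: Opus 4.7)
The plan is to combine Propositions~\ref{prop:MWfactor} and \ref{prop:factor} by using $\hat\pi$ as an essential inverse of $\nuW$. Proposition~\ref{prop:MWfactor} already provides one factor map $\nuW: (\hX,m_\hX,\hT)\to(\MW,\QM,S)$; to apply Proposition~\ref{prop:factor} in the reverse direction, I first have to verify the hypothesis $\QM(\MW\setminus\{\0\})=1$, i.e.\ $\QM(\{\0\})=0$.

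To prove $\QM(\{\0\})=0$ under the assumption $m_H(W)>0$, consider the preimage $Z:=\nuW^{-1}(\{\0\})=\{\hx\in\hX:(x+\LL)\cap(G\times W)=\emptyset\}$. Since $\nuW\circ\hT_g=S_g\circ\nuW$ and $S_g\0=\0$, the set $Z$ is $\hT$-invariant. By item~\ref{item2.2:4} of Subsection~\ref{en:ass} the action $\hT$ on $(\hX,m_\hX)$ is uniquely ergodic and hence $m_\hX$ is ergodic, so $m_\hX(Z)\in\{0,1\}$. To exclude $m_\hX(Z)=1$, I would use the standard Siegel-type mean value identity for lattice sums: for any bounded Borel $A\subseteq G$,
\[
\int_\hX \nuW(\hx)(A\times H)\,dm_\hX(\hx)
=\dL\cdot m_G(A)\cdot m_H(W),
\]
which follows from unfolding via the fundamental domain $X$ and the normalisation of $m_\hX$ recorded in Subsections \ref{assnot}--\ref{en:ass}. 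Choosing $A$ bounded with $m_G(A)>0$ makes the integral strictly positive, so $\{\hx:\nuW(\hx)\ne\0\}$ has positive $m_\hX$-measure, forcing $m_\hX(Z)=0$ and $\QM(\{\0\})=0$.

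With $\QM(\MW\setminus\{\0\})=1$ in hand, Proposition~\ref{prop:factor} (applied to $P=\QM$) supplies a factor map $\hat\pi:(\MW,\QM,S)\to(\hX,m_\hX,\hT)$. Moreover $\hat\pi\circ\nuW=\id_\hX$ holds on the full-measure set $\hX\setminus Z$, so in particular $\nuW$ is essentially injective. Thus $\nuW$ is a measurable, measure-preserving, essentially injective map between standard Borel probability spaces (recall item (4) of Subsection~\ref{en:ass}), with essential inverse $\hat\pi$. It is therefore a measure-theoretic isomorphism, and since both maps already intertwine the $\hT$- and $S$-actions, the isomorphism is one of measure-preserving dynamical systems.

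I expect the main obstacle to be the step $\QM(\{\0\})=0$: the ergodicity half is immediate, but the quantitative Siegel-type identity must either be cited from \cite{KR2015} or derived here by unfolding the Haar integral over $\hX$ against the counting measure on $\LL$. Once this density formula is available, the remaining assembly (essential injectivity from $\hat\pi\circ\nuW=\id$, plus standard-Borel measurable-bijection yields isomorphism) is routine.
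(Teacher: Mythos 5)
Your proof is correct and follows essentially the same route as the paper: establish $\QM(\{\0\})=0$ and then combine Propositions~\ref{prop:MWfactor} and~\ref{prop:factor}, with $\nuW$ and $\hat\pi$ serving as mutually (essentially) inverse factor maps. The only difference is that the paper obtains $m_\hX(\nuW^{-1}\{\0\})=0$ by citing \cite[Prop.~3.6b]{KR2015}, whereas you rederive it from ergodicity of $m_\hX$ together with the unfolded mean-value identity; both that derivation and your final assembly via essential injectivity are sound.
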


The projection $\piG:G\times H\to G$ induces a continuous factor map
$\piG_*:(\MW,S)\to(\MWG,S)$, which is the object of interest in this article. 
In order to understand to which extent statements as in the above propositions carry over to the system $(\MWG,S)$, one has to understand the degree of (non)invertibility of $\piG_*$.

Recall that a window  $W\subseteq H$ is \textit{(topologically) aperiodic}  or \textit{irredundant}, if   $h+W=W$ implies $h=0$.  In particular, any aperiodic window is nonempty. A window  $W\subseteq H$ is called \emph{topologically regular} if $W=\overline{\inn(W)}$.  Note that if $W\neq\emptyset$ is topologically regular, then $m_H(W)>0$.
The following fact is the essence of the results in \cite[Sec.~4.3]{KR2015}:

\begin{fact}\label{coro:top-regular-aperiodic}
$\piG_*:\MW\to\MWG$ is a homeomorphism, whenever the window $W$ is aperiodic and topologically regular.
\end{fact}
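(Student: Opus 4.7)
The plan is as follows. The map $\piG_*\colon \MW \to \MWG$ is continuous by construction and surjective: since $\MW$ is compact, $\piG_*(\MW)$ is closed in $\cMG$ and contains $\nuWG(\hX) = \piG_*(\nuW(\hX))$, hence also its closure $\MWG$. As $\MW$ is compact and $\MWG$ is Hausdorff, a continuous bijection between them is automatically a homeomorphism, so the whole task reduces to injectivity. Note that aperiodicity forces $W \ne \emptyset$, and topological regularity then gives $\inn(W) \ne \emptyset$; by \cite[Prop.~3.3]{KR2015} we have $\0 \notin \MW$, so every $\nu \in \MW$ admits a torus parameter $\hat\pi(\nu) \in \hX$ with $\supp \nu \subseteq \supp \nuW(\hat\pi(\nu))$.

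Suppose $\piG_* \nu_1 = \piG_* \nu_2$, and let $\hat x_i = \hat\pi(\nu_i)$ be represented by $x_i \in G \times H$. For each $y_1 \in \supp \nu_1$ there is a unique $y_2 \in \supp \nu_2$ with $\piG(y_1) = \piG(y_2)$, and $y_1 - y_2 \in (x_1 - x_2) + \LL$ has vanishing $G$-component. Since $\piG|_\LL$ is one-to-one, the element $\ell' \in \LL$ with $\piG(\ell') = -\piG(x_1 - x_2)$ is uniquely determined, so $y_1 - y_2 = (0, h^*)$ where $h^* := \piH(x_1 - x_2 + \ell')$ is the same for all matched pairs. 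Hence $\supp \nu_1 = \supp \nu_2 + (0, h^*)$, and the $H$-coordinates of $\supp \nu_2$ lie in $W \cap (W - h^*)$. Once $h^* = 0$ is established, $x_1 - x_2 \in \LL$ forces $\hat x_1 = \hat x_2$; both supports then sit in a single lattice coset on which $\piG$ is one-to-one, so matching $G$-projections imply $\nu_1 = \nu_2$.

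To derive $h^* = 0$, I would first prove the set equation $W = W + h^*$ and invoke aperiodicity. The key geometric fact is that points of $\supp \nuW(\hat x)$ lying in the open set $G \times \inn(W)$ are stable under vague limits: combining the upper semicontinuity of $\nuW$ with the observation that, for $\hat x_n \to \hat x_2$, any $y \in (x_2 + \LL) \cap (G \times \inn(W))$ is approximated by points $y_n \in (x_n + \LL) \cap (G \times \inn(W))$ which survive in the vague limit, one obtains $\supp \nu_2 \cap (G \times \inn(W)) = (x_2 + \LL) \cap (G \times \inn(W))$. Density of $\piH(\LL)$ in $H$ makes this $H$-projection dense in $\inn(W)$, and the inclusion $\piH(\supp \nu_2) \subseteq W - h^*$ then yields $\inn(W) \subseteq W - h^*$; topological regularity upgrades this to $W = \overline{\inn(W)} \subseteq W - h^*$, i.e., $W + h^* \subseteq W$. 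Exchanging the roles of $\nu_1$ and $\nu_2$ replaces $h^*$ by $-h^*$ in the whole argument and gives the reverse inclusion, producing $W = W + h^*$.

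The main obstacle I expect is exactly this last upgrade: a generic element of $\MW$ is only a vague limit of full model sets and may a priori fail to recover $W$ near its boundary. The topological-regularity hypothesis $W = \overline{\inn(W)}$ is precisely what bridges that gap, by allowing the interior, where $\nuW$ behaves continuously, to determine all of $W$; aperiodicity then converts the resulting symmetry $W = W + h^*$ into $h^* = 0$, completing the proof of injectivity.
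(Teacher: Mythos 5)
Your proof is correct and follows essentially the same route as the paper's argument for the stronger Theorem~A1: reduce to injectivity via compactness, show (as in Lemma~\ref{lemma:shifted-versions}) that configurations with equal $G$-projections differ by a shift $(0,h^*)$, prove that $h^*$ is a period, and invoke aperiodicity. The only cosmetic difference is that you recover the period relation by a direct vague-limit argument on interior lattice points and then upgrade $\inn(W)\subseteq W-h^*$ to $W+h^*=W$ via topological regularity, where the paper routes the same idea through the map $\SH$ and the unique minimal subset (Lemmas~\ref{lemma:SH-inv} and~\ref{lemma:shifted-versions}).
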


\noindent This fact is substantially extended in Theorems~\ref{theo:inn-aperiodic},~\ref{theo:inn-periodic},~\ref{theo:inn-aperiodicprime} and~\ref{theo:inn-periodicprime}.

\medskip

We now turn to measures on $\MWG$.
For the Mirsky measure $\QMG$ on $\MWG$,
Proposition~\ref{prop:MWfactor} immediately implies that  $(\MWG,\QMG,S)$ is a measure-theoretic factor of $(\hX,m_\hX,\hT)$. This implies pure point dynamical spectrum for the former system and, together with a condition called maximal density,  pure point dynamical spectrum transfers to the usual hull \cite[Cor.~6]{KR2015}, compare Remark~\ref{rem:hulls} and the result  \cite[Cor.~17]{BaakeHuckStrungaru15}. But there is no statement analogous to Proposition~\ref{prop:factor}. 
From Fact~\ref{coro:top-regular-aperiodic} and Proposition~\ref{prop:mirsky}, however, we get the following result for the Mirsky measure $\QMG$ on $\MWG$.
\begin{fact}\label{cor:miriso}
Suppose  that $W$ is aperiodic and topologically regular.
Then  $(\MWG,\QMG,S)$ is measure-theoretically isomorphic to $(\hX,m_\hX,\hT)$. \qed
\end{fact}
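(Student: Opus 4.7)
The plan is to combine the two preceding results directly. First I would note that topological regularity together with aperiodicity forces $W$ to be nonempty (aperiodicity rules out $W=\emptyset$, since otherwise every $h\in H$ would be a period) and, being topologically regular and nonempty, $W=\overline{\inn(W)}$ with $\inn(W)\ne\emptyset$; hence $m_H(W)\ge m_H(\inn(W))>0$ because $\inn(W)$ is a nonempty open subset of the locally compact group $H$. This puts us in the setting of Proposition~\ref{prop:mirsky}.

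Next I would invoke Proposition~\ref{prop:mirsky} to obtain that $(\MW,\QM,S)$ is measure-theoretically isomorphic to $(\hX,m_\hX,\hT)$. Independently, Fact~\ref{coro:top-regular-aperiodic} yields that $\piG_*:\MW\to\MWG$ is a homeomorphism under the standing hypotheses on $W$. Since $\piG_*$ intertwines the two $G$-actions by construction (it is the continuous factor map induced by the canonical projection $\piG$ and commutes with the translation actions $S_g$), and since by definition $\QMG=\QM\circ(\piG_*)^{-1}$, the homeomorphism $\piG_*$ is simultaneously a measure-preserving $G$-equivariant bijection between $(\MW,\QM,S)$ and $(\MWG,\QMG,S)$. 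In particular the two systems are measure-theoretically isomorphic.

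Composing the isomorphism $\hX\to\MW$ provided by Proposition~\ref{prop:mirsky} with $\piG_*:\MW\to\MWG$ then gives the desired measure-theoretic isomorphism between $(\hX,m_\hX,\hT)$ and $(\MWG,\QMG,S)$. I do not expect any genuine obstacle here: the statement is a formal consequence of the two previously recorded results, the only minor point being the verification that $m_H(W)>0$ so that Proposition~\ref{prop:mirsky} applies, which is immediate from topological regularity and aperiodicity as above.
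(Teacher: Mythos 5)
Your proposal is correct and follows exactly the route the paper intends: the fact is stated with a \qed precisely because it is the formal composition of Fact~\ref{coro:top-regular-aperiodic} (the homeomorphism $\piG_*$, which is $S$-equivariant and pushes $\QM$ to $\QMG$ by definition) with Proposition~\ref{prop:mirsky}, after observing that a nonempty topologically regular window has $m_H(W)>0$ and that aperiodicity forces $W\neq\emptyset$. No further comment is needed.
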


\noindent Theorems~\ref{theo:main-2} and~\ref{theo:main-2-periodic} extend this fact.
The following is an immediate consequence of Fact~\ref{coro:top-regular-aperiodic} and Proposition~\ref{prop:factor}. It will be extended in Theorems~\ref{theo:main} and~\ref{theo:main-periodic}.
\begin{fact}\label{cor:mtext}
Suppose  that $W$ is aperiodic and topologically regular.
If $\PG$ is an $S$-invariant probability measure on $\MWG$, then  $(\MWG,\PG,S)$ is a measure-theoretic extension of $(\hX,m_\hX,\hT)$. \qed
\end{fact}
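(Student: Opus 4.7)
The plan is to leverage Fact~\ref{coro:top-regular-aperiodic}, which upgrades $\piG_*$ from a continuous factor map to a homeomorphism $\phi := \piG_* : \MW \to \MWG$ under the assumption that $W$ is aperiodic and topologically regular. Because $\phi$ intertwines the $G$-actions $S$ on $\MW$ and $\MWG$ (this is already part of the factor map structure recorded after Proposition~\ref{prop:mirsky}), pulling $\PG$ back through $\phi$ yields an honest $S$-invariant Borel probability measure $P := \PG \circ \phi$ on $\MW$. Concretely, $P(A) := \PG(\phi(A))$ for Borel $A \subseteq \MW$, and $S$-invariance of $P$ follows from $\phi \circ S_g = S_g \circ \phi$ together with $S$-invariance of $\PG$.

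The next step is to verify that $P$ satisfies the hypothesis of Proposition~\ref{prop:factor}, namely $P(\MW\setminus\{\0\})=1$. Since $W$ is topologically regular and nonempty (aperiodicity forces $W\neq\emptyset$), we have $\inn(W)=\inn(\overline{\inn(W)})\supseteq\inn(W)\neq\emptyset$; by the criterion recalled after equation~\eqref{eq:nuWG-def} (\cite[Prop.~3.3]{KR2015}), this is equivalent to $\0\notin\MW$. Therefore $P(\MW\setminus\{\0\}) = P(\MW) = 1$ trivially, and Proposition~\ref{prop:factor} provides a measurable, $G$-equivariant map $\psi : \MW \to \hX$ with $\psi_* P = m_\hX$.

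To finish, I would compose this with the measurable inverse of the homeomorphism: the map $\psi \circ \phi^{-1} : \MWG \to \hX$ is measurable, $G$-equivariant, and satisfies
\begin{equation*}
(\psi\circ \phi^{-1})_* \PG \;=\; \psi_*\bigl(\phi^{-1}_* \PG\bigr) \;=\; \psi_* P \;=\; m_\hX.
\end{equation*}
This exhibits $(\MWG,\PG,S)$ as a measure-theoretic extension of $(\hX,m_\hX,\hT)$, as claimed. There is no substantial obstacle here: the entire argument is a transport-of-structure along the homeomorphism $\phi$ furnished by Fact~\ref{coro:top-regular-aperiodic}, so the only thing worth noting is the observation that aperiodicity together with topological regularity rules out the empty configuration from $\MW$, which is precisely what allows Proposition~\ref{prop:factor} to apply to an arbitrary invariant probability measure rather than only to those charging $\MW\setminus\{\0\}$.
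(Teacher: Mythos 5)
Your proposal is correct and takes essentially the same route the paper intends, namely deriving the fact immediately from Fact~\ref{coro:top-regular-aperiodic} and Proposition~\ref{prop:factor}: transport $\PG$ through the homeomorphism $\piG_*$ to an $S$-invariant measure on $\MW$, observe that topological regularity together with aperiodicity forces $\inn(W)\neq\emptyset$ and hence $\0\notin\MW$, so the hypothesis $P(\MW\setminus\{\0\})=1$ holds automatically, and then compose the factor map from Proposition~\ref{prop:factor} with $(\piG_*)^{-1}$. The only cosmetic remark is that the displayed chain $\inn(W)=\inn(\overline{\inn(W)})\supseteq\inn(W)\neq\emptyset$ is awkwardly phrased; the clean argument is simply that $\inn(W)=\emptyset$ would give $W=\overline{\inn(W)}=\emptyset$, contradicting aperiodicity.
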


\section{Main results}\label{ch:results}

\subsection{Topological results}
In this subsection we assume that the window $W$ is compact. Extensions of the results to certain non-compact windows are discussed in Section~\ref{sec:rcwin}.
Our first main result strengthens Fact~\ref{coro:top-regular-aperiodic} considerably.

\begin{theoremI}\label{theo:inn-aperiodic}
Assume that $W$ is compact and that $\inn(W)$ is aperiodic (so in particular non-empty). 
\begin{compactenum}[a)]
\item The topological dynamical systems $(\MW,S)$ and $(\MWG,S)$ are isomorphic, and both are almost \oneone extensions of their maximal equicontinuous factor $(\hX,\hT)$.
\item Denote by $\Gamma:\MWG\to\hX$ the factor map from a).
If $M$ is a non-empty, closed $S$-invariant subset of $\MWG$, then $(M,S)$ is an
almost \oneone extension of its maximal equicontinuous factor $(\hX,\hT)$ with factor map 
$\Gamma|_{M}$.
\end{compactenum}
\end{theoremI}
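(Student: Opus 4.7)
The plan is to construct a continuous factor map $\Gamma:\MWG\to\hX$ via the torus-parameter map $\hat\pi:\MW\to\hX$, prove it is almost \oneone by exhibiting a dense $G_\delta$ of singleton fibres, and then deduce the maximal-equicontinuous-factor claim from standard almost-automorphic theory. Since aperiodicity of $\inn(W)$ forces $\inn(W)\neq\emptyset$, \cite[Prop.~3.3]{KR2015} gives $\0\notin\MW$, and hence $\hat\pi$ is defined and continuous on all of $\MW$ with $\hat\pi\circ\nuW=\id_\hX$; in particular $\hat\pi$ is already a continuous factor map from $(\MW,S)$ onto $(\hX,\hT)$.

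The first main step strengthens Fact~\ref{coro:top-regular-aperiodic}: I would show that $\piG_*:\MW\to\MWG$ is a homeomorphism under the sole hypothesis that $\inn(W)$ is aperiodic. Since both spaces are compact Hausdorff, injectivity suffices. Given $\nu\in\MW$, once its torus parameter $\hat\pi(\nu)$ is known, $\nu$ is uniquely recovered from $\piG_*\nu$, because $\piG|_\LL$ is \oneone and hence the $G$-support of $\piG_*\nu$ lifts uniquely into the coset $\hat\pi(\nu)+\LL$. The substantive task is to show that distinct torus parameters cannot produce the same $G$-projection: if $\hx\neq\hx'$ and $\nuWG(\hx)=\nuWG(\hx')$, then the shift $h\in H$ by which $\hx'$ differs from $\hx$ in the $H$-direction induces a nontrivial set-theoretic relation between translates of $\inn(W)$; after intersecting along the orbit of $h$ one extracts a nonzero period of $\inn(W)$, contradicting the hypothesis. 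Setting $\Gamma:=\hat\pi\circ(\piG_*)^{-1}$ then gives the required continuous factor map in both a) and b).

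For the almost \oneone property I work with the continuity set $Y\subseteq\hX$ of $\nuW$. Upper semicontinuity of $\nuW$ (valid because $W$ is compact, \cite[Prop.~3.3]{KR2015}) implies $Y$ is a $G_\delta$; it is $\hT$-invariant by equivariance of $\nuW$ and nonempty because it contains every $\hx$ whose coset $\hx+\LL$ avoids $G\times\partial W$, a condition violated only on a countable union of translates of $\partial W$ in the $H$-direction. Minimality of $(\hX,\hT)$ (item~\ref{item2.2:4}) then forces $Y$ to be dense in $\hX$. For $\hx\in Y$, every $\nu\in\hat\pi^{-1}(\hx)$ arises as a vague limit $\nuW(\hx_n)\to\nu$; continuity of $\hat\pi$ gives $\hx_n=\hat\pi(\nuW(\hx_n))\to\hat\pi(\nu)=\hx$, and continuity of $\nuW$ at $\hx$ then forces $\nu=\nuW(\hx)$. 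Hence $\hat\pi^{-1}(\hx)$ is a singleton on the dense $G_\delta$ set $Y$, proving almost \oneoneness. Since $(\hX,\hT)$ is equicontinuous, and any equicontinuous factor of an almost \oneone extension must itself factor through the base (the separating continuous eigenfunctions pull back to factor the singleton-fibre set), $(\hX,\hT)$ is the maximal equicontinuous factor of $(\MW,S)\cong(\MWG,S)$.

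Part~b) follows by restriction. The image $\Gamma(M)$ is a closed $\hT$-invariant subset of $\hX$, hence equals $\hX$ by minimality, so $\Gamma|_M$ is surjective. For $\hx\in Y$ the fibre $(\Gamma|_M)^{-1}(\hx)\subseteq\Gamma^{-1}(\hx)$ is already a singleton in $\MWG$, so $\Gamma|_M$ is almost \oneone with base $(\hX,\hT)$, and the same factorisation argument as in a) identifies $(\hX,\hT)$ as the maximal equicontinuous factor of $(M,S)$. The most delicate step throughout is the injectivity of $\piG_*$ under the weakened hypothesis: one must ensure that the period-extraction argument operates only on the interior $\inn(W)$, so that boundary contributions of $W$ -- which can cause $\nuW$ to be genuinely multi-valued in the vague limit -- do not interfere, and this is precisely where aperiodicity of $\inn(W)$, as opposed to aperiodicity of $W$ itself, is essential.
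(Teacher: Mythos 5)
Your overall strategy coincides with the paper's: reduce part a) to showing that $\piG_*:\MW\to\MWG$ is injective (hence a homeomorphism of compact spaces), obtain the almost \oneone structure from the dense $G_\delta$ set $\CW$ of continuity points of $\nuW$ with singleton fibres, and get b) by restriction. The portions that re-derive \cite[Thm.~1a]{KR2015} (denseness of $\CW$ via Baire and minimality, singleton fibres over $\CW$, identification of the maximal equicontinuous factor, surjectivity of $\Gamma|_M$) are sound, and your part b) is essentially the paper's argument.

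The gap is in the injectivity of $\piG_*$, which is the only genuinely new content of the theorem. You argue only for configurations of the form $\nuWG(\hx)$, $\hx\in\hX$; there a period of $\inn(W)$ can indeed be extracted using denseness of $\piH(\LL)$. But $\MW$ is the vague \emph{closure} of $\nuW(\hX)$, and injectivity of a continuous map on a dense subset does not imply injectivity on the closure. For a general $\nu\in\MW$ one only knows $\supp(\nu)\subseteq\supp(\nuW(\hat\pi(\nu)))$, so from $\piG_*\nu=\piG_*\nu'$ one gets $\nu'=\sigma_d\nu$ and merely $\overline{\piH(\supp\nu)}\subseteq W\cap(W-d)$, which yields no period of $\inn(W)$ unless one knows that $\overline{\piH(\supp\nu)}$ is large -- specifically that it contains $\overline{\inn(W)}$. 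This is exactly Lemma~\ref{lemma:SH-inv}c),d): $\overline{\inn(W)}\subseteq\SH(\nu)$ and $\inn(\SH(\nu))=\inn(W)$ for \emph{every} $\nu\in\MW$, proved by observing that the unique minimal subset $\Mmin=\overline{\nuW(\CW)}$ lies in every orbit closure $\ocl{\nu}$, that $\SH\equiv\overline{\inn(W)}$ on $\Mmin$, and that $\SH$ is monotone along orbit closures (lower semicontinuity, Lemma~\ref{lem:lsc}). You flag the boundary issue in your final sentence but do not supply this mechanism; without it the step $d+\inn(W)=\inn(W)$ is unjustified for limit configurations, and the proof of a) -- and with it b) -- is incomplete.
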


\begin{remark}\label{remark:aperiodic}
\begin{compactenum}[a)]
\item The above result extends \cite[Cor.~1]{KR2015}, as aperiodicity of $\overline{\inn(W)}$ implies aperiodicity of $\inn(W)$ by Lemma~\ref{lem:aper}a).  
In particular, the result applies to $M=\MminG$ the unique minimal subset of $\MWG$, see Remark~\ref{remark:MminG}, and to $M=\MWG(\hx):=\overline{\{S_g\nuWG(\hx):g\in G\}}$ the so-called \textit{hull} of $\nuWG(\hx)\in \MWG$.

\item
Recall that $\inn(W)\ne\emptyset$ is aperiodic whenever the only  compact subgroup of $H$ is the trivial one, compare the proof of \cite[Prop.~4.8]{KR2015}. This holds in particular for $H=\R^d$.

\item Aperiodicity of $\inn(W)$ implies aperiodicity of $W$. Examples of aperiodic $W$ with periodic interior are presented in \cite{KKL2016}.

\item For a topologically regular window, aperiodicity of $W$ and $\inn(W)$ are equivalent by Lemma~\ref{lem:aper}a). This gives the setting of an earlier result of Robinson \cite[Th.~5.19, Cor.~5.20]{Robinson2007}, see also \cite[Prop.~7.3]{FHK2002} in the Euclidean situation. 

\end{compactenum}

\end{remark}

If $\inn(W)$ has non-trivial periods, we still can determine the maximal equicontinuous factor of $(\MWG,S)$. Given a subset $A\subseteq H$, we call
\begin{equation*}
H_A:=\left\{h\in H: h+A=A\right\}
\end{equation*}
the \emph{period group} of $A$. The set $A\subseteq H$ is \textit{(topologically) aperiodic}, if $H_A=\{0\}$.
The following result extends Theorem~\ref{theo:inn-aperiodic}.  We denote by $\cH_A:=\{0\}\times H_A\subseteq G\times H$ the canonical embedding of $H_A$ into $G\times H$. Note that $\cH_{\inn(W)}$ is compact as $H_{\inn(W)}=H_{\overline{\inn(W)}}$ is a compact subgroup of $H$, compare  Lemma~\ref{lem:aper}c).

\begin{theoremII}\label{theo:inn-periodic}
Assume that $W$ is compact and that $\inn(W)\neq\emptyset$.
Let $\hXprime=\hX/\pihX(\cH_{\inn(W)})$ with induced $G$-action $\hTprime$, 
and let $M$ be any non-empty, closed $S$-invariant subset of $\MWG$ (thus including the case $M=\MWG$).
\begin{compactenum}[a)]
\item $(\hXprime,\hTprime)$ is the maximal equicontinuous factor of
the topological dynamical system $(M,S)$.
\item If $H_{\inn(W)}=H_W$, then $(M,S)$ is an almost \oneone extension of $(\hXprime,\hTprime)$.
\end{compactenum}
\end{theoremII}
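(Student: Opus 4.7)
The plan is to reduce both parts of Theorem~\ref{theo:inn-periodic} to the aperiodic case (Theorem~\ref{theo:inn-aperiodic}) by passing to a quotient cut-and-project scheme that collapses $K := H_{\inn(W)}$. By Lemma~\ref{lem:aper}c), $K$ is a compact subgroup of $H$, so $H' := H/K$ is still a locally compact second countable abelian group and the quotient map $q: H \to H'$ is continuous, open and surjective. Set $\LL' := (\id_G \times q)(\LL) \subseteq G \times H'$; since $\cH_K \cap \LL = \{0\}$ (as $\piG|_\LL$ is injective and $\cH_K$ has trivial $G$-component), the induced map $\LL \to \LL'$ is a group isomorphism, $\LL'$ is a cocompact lattice projecting injectively to $G$ and densely to $H'$, and $(G \times H')/\LL' = \hXprime$ with induced $G$-action $\hTprime$. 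The systematic construction of such quotient schemes is carried out in Section~\ref{sec:periods}.

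For part (b), the hypothesis $H_{\inn(W)} = H_W$ forces $W$ itself to be $K$-invariant, so $W' := q(W)$ is a compact subset of $H'$ with $q^{-1}(W') = W$. Consequently $\nuWG(\hat{x})$ depends only on the coset $\hat{x} + \pihX(\cH_K) \in \hXprime$, yielding a canonical $G$-equivariant identification of $\MWG$ with the quotient extended hull $\MWGprime$. Since $H_{W'} = H_W/K = \{0\}$, the window $W'$ is aperiodic, and $\inn(W') \supseteq q(\inn(W)) \neq \emptyset$ is aperiodic as well (any period of $\inn(W')$ lifts into $H_{\inn(W)} = K$). Theorem~\ref{theo:inn-aperiodic}b) applied to the quotient scheme then yields the almost \oneone extension property for every non-empty closed $S$-invariant $M \subseteq \MWG$.

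For part (a), the window $W$ need not be $K$-invariant, so one works instead with $W'' := q(\overline{\inn(W)})$, whose preimage $q^{-1}(W'') = \overline{\inn(W)}$ is $K$-invariant (using $K = H_{\overline{\inn(W)}}$, Lemma~\ref{lem:aper}c). Verifying $\overline{\inn(W'')} = W''$ and $H_{W''} = \{0\}$ shows that the quotient scheme $(\LL', W'')$ satisfies the hypotheses of Theorem~\ref{theo:inn-aperiodic}, which identifies $\hXprime$ as the maximal equicontinuous factor of $\cM^G_{W''}$. To transfer the existence of an equicontinuous factor map back to $\MWG$, one uses the torus parametrisation $\hat\pi: \MW \setminus \{\0\} \to \hX$ and defines $\alpha(\nu) := \nu^G_{W''}(\hat\pi(\nu) + \pihX(\cH_K))$ on $M \setminus \{\0\}$; composing $\alpha$ with Theorem~\ref{theo:inn-aperiodic}'s factor map yields an equicontinuous factor map $M \to \hXprime$.

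The principal obstacle is the \emph{maximality} assertion in part (a): showing $\hXprime$ is the largest equicontinuous factor, not merely one. The plan is to argue via the regionally proximal relation: for any two $\hat{x}_1, \hat{x}_2 \in \hX$ with $\hat{x}_1 - \hat{x}_2 \in \pihX(\cH_K)$, the configurations $\nuWG(\hat{x}_1)$ and $\nuWG(\hat{x}_2)$ should be regionally proximal in $(\MWG, S)$, forcing any equicontinuous factor of $(M, S)$ to identify such pairs and hence to descend to $\hXprime$. The $K$-invariance of $\inn(W)$ provides $G$-translates bringing the configurations vaguely close off of $\piG(\LL \cap (G \times \partial W))$; the remaining boundary discrepancies must then be shown to vanish in the vague topology, and this is where the most delicate analysis of the window's boundary behaviour will be required.
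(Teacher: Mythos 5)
Your part b) is sound and in fact coincides with the remark the paper makes after its own proof: when $H_{\inn(W)}=H_W$ one passes to the quotient scheme with the aperiodic window $\varphi_W(W)$, uses $\MWG=\MWGprime$ (Proposition~\ref{prop:same-images}) and applies Theorem~\ref{theo:inn-aperiodic}. Your construction of the factor map in part a) is also essentially the paper's Lemma~\ref{lem:factor}, but note that well-definedness on the fibres of $\piG_*$ is not automatic: it rests on Lemma~\ref{lemma:shifted-versions}, which shows that two preimages of the same $\nuG$ differ by $\sigma_d$ with $d\in H_{\inn(W)}$, so that their torus parameters differ by an element of $\pihX(\cH_{\inn(W)})$; continuity of the induced map on $M$ also needs an argument (the paper uses that $\piG_*$-preimages of closed sets are compact), since your $\alpha$ involves the discontinuous map $\nu^{\scriptscriptstyle G}_{\scriptscriptstyle W''}$.

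The genuine gap is the maximality assertion in a), which you yourself flag as unresolved. The regional-proximality plan is insufficient as stated, for two reasons. First, it only treats pairs $(\nuWG(\hx_1),\nuWG(\hx_2))$ with $\hx_1-\hx_2\in\pihX(\cH_{\inn(W)})$. An arbitrary non-empty closed invariant $M$ need not contain both members of such a pair ($\CW$ is invariant under translation by $\cH_W$, but not in general by $\cH_{\inn(W)}$ when these groups differ, and $\nuWG(\hx+(0,h))$ for $h\in H_{\inn(W)}\setminus H_W$ can acquire extra boundary points); and even for $M=\MWG$ the fibres of the factor map contain configurations that are not of the form $\nuWG(\hx)$, so placing only these special pairs into the regionally proximal relation does not show that the whole fibre relation lies in the equicontinuous structure relation. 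Second, the proximality estimate itself (``boundary discrepancies vanish in the vague topology'') is left open and is exactly the hard analytic point. The paper avoids all of this by reducing to the unique minimal subset: every such $M$ contains $\MminG=\overline{\nuWG(\CW)}$; putting $W_0=\overline{\inn(W)}$ one has $\CW\subseteq\CWzero$ and $\nuWGzero|_{\CW}=\nuWG|_{\CW}$ because $\partial W_0\subseteq\partial W$, whence $\MminG=\overline{\nuWGzero(\CWzero)}$ is simultaneously the unique minimal subset of $\MWGzero$. Since $W_0$ is topologically regular, its image in the quotient scheme is aperiodic with aperiodic interior, so Theorem~\ref{theo:inn-aperiodic} together with Proposition~\ref{prop:same-images} makes $(\MminG,S)$ an almost \oneone extension of $(\hXprime,\hTprime)$, which is therefore its maximal equicontinuous factor; any equicontinuous factor of $(M,S)$ restricts to one of $(\MminG,S)$ and hence factors through $(\hXprime,\hTprime)$. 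You should replace the regional-proximality step by this reduction to $\MminG$.
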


\begin{remark}
\begin{compactenum}[a)]
\item We do not know whether $H_{\inn(W)}=H_W$ is also a necessary condition in part b) of the theorem. The condition is satisfied for topologically regular windows.

\item 
If $\inn(W)=\emptyset$, then the maximal equicontinuous factor of $(M,S)$ is trivial, 
see Remark~\ref{remark:MminG}. However statements analogous to the above two theorems hold for maximal equicontinuous generic factors \cite{Keller16}.
\end{compactenum}
\end{remark}

\subsection{Measure-theoretic results}

In this subsection we assume that $W\subseteq H$ is relatively compact and measurable, but not necessarily compact.
If $\inn(W)=\emptyset$, one should not expect that $\piG_*$ is a homeomorphism.
However, the measure-theoretic statements of Facts~\ref{cor:miriso} and~\ref{cor:mtext} can still be generalized substantially to windows where  $\piG_*:\MW\to\MWG$ is not necessarily a homeomorphism, but still \oneone on a sufficiently large subset of $\MW$. This is achieved by replacing topological aperiodicity through a stronger measure-theoretic version. In the following definition, $\triangle$ denotes the symmetric set difference.

\begin{definition}[Haar aperiodicity]
A measurable set $A\subseteq H$ is 
\emph{Haar aperiodic}, if $m_H((h+A)\triangle A)=0$ implies $h=0$.
\end{definition}

\begin{remark}
Any Haar aperiodic set $A$ satisfies $m_H(A)>0$ and is, in particular, nonempty. Haar aperiodicity implies topological aperiodicity, but the converse holds only for a more restricted class of windows, see Remark~\ref{remark:Haar-and-other-periods}.
\end{remark}

\begin{definition}
A measurable subset $A$ of $H$ is \emph{compact modulo $0$}, if there is a compact
set $K\subseteq H$ such that $m_H(A\triangle K)=0$.
\end{definition}

We have the following results for the Mirsky measure $\QMG$  on $\MWG$ , which generalise Fact~\ref{cor:miriso}.  They are proved in Section~\ref{sec:proof-2-3}. 

\begin{theoremI}\label{theo:main-2}
Suppose that  $W$ is compact modulo $0$ and Haar aperiodic. Then $(\MWG,\QMG,S)$ is measure-theoretically isomorphic to $(\hX,m_\hX, \hT)$.
\end{theoremI}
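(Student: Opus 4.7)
The plan is to combine Proposition~\ref{prop:mirsky}, which gives $(\MW,\QM,S)\cong(\hX,m_\hX,\hT)$, with a proof that the continuous factor $\piG_*\colon(\MW,\QM)\to(\MWG,\QMG)$ is essentially injective; equivalently, that $\nuWG=\piG_*\circ\nuW\colon\hX\to\MWG$ is $m_\hX$-a.e.\ injective. First I would reduce to compact $W$: choosing compact $K\subseteq H$ with $m_H(W\triangle K)=0$ and applying Siegel's unfolding identity
\[
\int_{\hX}\sum_{\ell\in\LL}f(x+\ell)\,dm_\hX(\hx)=\dL\int_{G\times H}f\,dm_{G\times H}
\]
with $f=\mathbf{1}_{B\times(W\triangle K)}$ for $B$ ranging over a countable exhaustion of $G$ forces $\nuW(\hx)=\nu_K(\hx)$ for $m_\hX$-a.e.\ $\hx$. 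Since Haar aperiodicity of $W$ and of $K$ coincide, WLOG $W$ is compact.

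Since $(\hX,m_\hX,\hT)$ is an ergodic rotation on a compact abelian group, every measure-theoretic factor is a rotation on a quotient by a closed subgroup. Thus $(\MWG,\QMG,S)\cong(\hX/\mathcal{N},m_{\hX/\mathcal{N}},\hT)$, where
\[
\mathcal{N}=\bigl\{\hat n\in\hX:\nuWG(\hx+\hat n)=\nuWG(\hx)\text{ for }m_\hX\text{-a.e.\ }\hx\bigr\}
\]
is closed; the theorem reduces to showing $\mathcal{N}=\{0\}$. Fix $\hat n\in\mathcal{N}$ with representative $(n_G,n_H)\in G\times H$. A second application of Siegel's formula with $f=\mathbf{1}_{B\times W}$ and $m_H(W)>0$ shows that $\{\hx:\nuWG(\hx)\ne\0\}$ has positive, and hence by $\hT$-invariance and ergodicity full, $m_\hX$-measure. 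For such $\hx$ the supports of $\nuWG(\hx)$ and $\nuWG(\hx+\hat n)$ lie in the $\piG(\LL)$-cosets $x_G+\piG(\LL)$ and $x_G+n_G+\piG(\LL)$ in $G$; equality of nonempty configurations forces $n_G\in\piG(\LL)$, and adjusting the representative of $\hat n$ by an element of $\LL$ reduces to $n_G=0$.

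It remains to show that $\pihX(0,n_H)\in\mathcal{N}$ implies $n_H$ is a Haar period of $W$. Using injectivity of $\piG|_\LL$, the condition $\nuWG(\hx+\pihX(0,n_H))=\nuWG(\hx)$ a.e.\ translates to: for $m_\hX$-a.e.\ $\hx$, no $\ell\in\LL$ satisfies $x_H+\piH(\ell)\in D:=W\triangle(W-n_H)$. A third application of Siegel's formula with $f=\mathbf{1}_{B\times D}$ for any bounded $B\subseteq G$ of positive $m_G$-measure gives
\[
\int_{\hX}\#\bigl\{\ell\in\LL:x_G+\piG(\ell)\in B,\ x_H+\piH(\ell)\in D\bigr\}\,dm_\hX=\dL\,m_G(B)\,m_H(D).
\]
If $m_H(D)>0$, the integer-valued integrand would be $\ge 1$ on a set of positive $m_\hX$-measure, contradicting the a.e.\ emptiness; therefore $m_H(D)=0$, so $n_H$ is a Haar period of $W$. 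Haar aperiodicity then forces $n_H=0$, whence $\hat n=0$ and $\mathcal{N}=\{0\}$. I expect the main obstacle to be this last step: converting a torus-level a.e.\ nonintersection statement into a Haar-level triviality of $D$. Siegel's formula is the crucial bridge that turns the count of lattice points in a $G\times D$-strip into the Haar measure of $D$.
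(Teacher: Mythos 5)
Your argument is correct, but it follows a genuinely different route from the paper's. The paper works upstairs in $\MW$: by Moody's uniform distribution theorem (Eqn.~\eqref{eq:Moody}) $\QM$-almost every configuration has maximal density, Lemma~\ref{lemma:SHnu=W} then squeezes $W_{reg}\subseteq\SH(\nu)\subseteq W$, and Lemma~\ref{lemma:shifted-versions} shows that $\piG_*\nu=\piG_*\nu'$ forces $\nu'=\sigma_d\nu$ with $m_H((d+W)\triangle W)=0$, so Haar aperiodicity gives $d=0$; thus $\piG_*$ is $\QM$-a.e.\ injective (Corollary~\ref{coro:max-density}) and one composes with Proposition~\ref{prop:mirsky}. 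You instead bypass $\MW$ and the map $\SH$ entirely: you treat $(\MWG,\QMG,S)$ as a factor of the Kronecker system $(\hX,m_\hX,\hT)$, invoke the classification of such factors as quotients $\hX/\mathcal N$ by closed subgroups, and compute the stabilizer $\mathcal N$ directly with the Weil/Siegel unfolding formula. Your route avoids Moody's density theorem altogether, and --- notably --- your computation of $\mathcal N$ never actually uses that $W$ is compact modulo $0$ (only measurability, relative compactness and $m_H(W)>0$), so it yields the conclusion for arbitrary relatively compact measurable Haar aperiodic windows; the reduction to compact $W$ in your first step is harmless but superfluous. What the paper's route buys in exchange is the stronger intermediate fact that $\piG_*:(\MW,\QM,S)\to(\MWG,\QMG,S)$ is itself an isomorphism, and, more importantly, the $\SH$/$W_P$ machinery that carries over to arbitrary ergodic measures on $\MWG$ (Theorems~\ref{theo:main} and~\ref{theo:main-periodic}), where the Kronecker-factor classification you rely on is unavailable. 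One point you should spell out is the identification of $\mathcal N$ with the annihilator $\Lambda^\perp$ of the character group $\Lambda$ of the factor: the inclusion $\mathcal N\subseteq\Lambda^\perp$ is immediate, while $\Lambda^\perp\subseteq\mathcal N$ requires choosing a genuinely $\Lambda^\perp$-invariant version of $\nuWG$ and using translation invariance of the exceptional null set; this is standard Halmos--von Neumann material but is the hinge on which your reduction to $\mathcal N=\{0\}$ turns.
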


Here is an extension of Theorem~\ref{theo:main-2} to windows that are not Haar aperiodic. 
The proof is provided in Section~\ref{sec:proofs-II} after some preparations in Section~\ref{sec:periods}.
To formulate the statement, we consider the group $H_W^{Haar}$ of Haar periods of $W$, i.e.,
\begin{displaymath}
H_W^{Haar}:=\{h\in H: m_H((h+W)\triangle W)=0\} \ .
\end{displaymath}
We write $\cH_W^{Haar}=\{0\}\times H_W^{Haar}$ for the canonical embedding of $H_W^{Haar}$ into $G\times H$.

\begin{theoremII}\label{theo:main-2-periodic}
Suppose that $W$ is compact modulo $0$ and $m_H(W)>0$. Let $\hXprime=\hX/\pihX(\cH_W^{Haar})$ with induced $G$-action $\hTprime$ and Haar measure $m_{\hXprime}$. Then the measure-theoretic dynamical system $(\MWG,\QMG, S)$ is isomorphic to $(\hXprime,m_\hXprime,\hTprime)$. 
\end{theoremII}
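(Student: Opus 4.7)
The plan is to show directly that $\nuWG: (\hX, m_\hX, \hT) \to (\MWG, \QMG, S)$ descends to a measure-theoretic isomorphism $\tilde\nu: (\hX', m_{\hX'}, \hT') \to (\MWG, \QMG, S)$ through the quotient $q: \hX \to \hX'$. As preparation, I observe that $H_W^{Haar}$ is closed in $H$ (it is the zero set of the continuous map $h \mapsto \|\1_{h+W}-\1_W\|_{L^1}$), so $\pihX(\cH_W^{Haar})$ is closed in $\hX$, the quotient $\hX'$ is a compact second countable abelian group, and $q$ pushes $m_\hX$ to $m_{\hX'}$ while intertwining the $G$-actions.

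First, I would show $\nuWG$ factors through $q$ modulo $m_\hX$-null sets: for each fixed $h \in H_W^{Haar}$, the identity $\nuWG(\hx + \widehat{(0,h)}) = \nuWG(\hx)$ holds $m_\hX$-a.e., since failure requires $x_H + \piH(\LL)$ to meet the $m_H$-null set $N_h := W \triangle (W-h)$, and $\LL$ is countable. Averaging $\nuWG$ over the Haar measure on the closed subgroup $\pihX(\cH_W^{Haar})$ then yields a truly $\pihX(\cH_W^{Haar})$-invariant measurable version, equivalently a measurable $\tilde\nu: \hX' \to \MWG$ with $\nuWG = \tilde\nu \circ q$ $m_\hX$-a.e.\ and $(\tilde\nu)_* m_{\hX'} = \QMG$. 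Second, I would show $\tilde\nu$ is $m_{\hX'}$-a.e.\ injective. Suppose $\nuWG(\hx_1) = \nuWG(\hx_2)$: injectivity of $\piG|_\LL$ and atom-matching yield a unique $l'' \in \LL$ with $l''_G = x_{1,G}-x_{2,G}$ and a well-defined $h_0 := x_{2,H} - x_{1,H} + l''_H \in H$ depending only on $\hx_1 - \hx_2 = \pihX(0, -h_0)$, and the equality of configurations reduces to $(x_{1,H} + \piH(\LL)) \cap N_{h_0} = \emptyset$. For $h_0 \notin H_W^{Haar}$, $m_H(N_{h_0}) > 0$, and the $\hT$-invariant function $F(\hx) := \sum_{l \in \LL}\1_{N_{h_0}}(x_H + l_H)$ satisfies $\int F\, dm_\hX = \dL \cdot m_G(G) \cdot m_H(N_{h_0}) > 0$; ergodicity of $(\hX, m_\hX, \hT)$ forces $F$ to be $m_\hX$-a.e.\ constant, hence $F \ge 1$ a.e., so the condition fails $m_\hX$-a.e. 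A Fubini-type argument on the self-joining of $m_\hX$ over $\QMG$ via $\nuWG$ then transfers this to $h_0 \in H_W^{Haar}$ almost surely, i.e.\ $q(\hx_1) = q(\hx_2)$. Combining the two steps, $\tilde\nu$ is the desired isomorphism.

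The main obstacle is the concluding Fubini step, which transfers a per-$h_0$ null-set statement in the variable $\hx_1$ to one on the self-joining $\mu$ of $m_\hX$ over $\QMG$, where $\mu$ is typically singular relative to $m_\hX \otimes m_\hX$. A viable route is to decompose $\mu$ via the pushforward under $(\hx_1, \hx_2) \mapsto h_0(\hx_1, \hx_2)$ (taking values in $H$), noting that the map $(\hx_1, \hx_2) \mapsto h_0$ depends only on $\hx_1 - \hx_2$, and to verify that the conditional $\hx_1$-marginals of $\mu$ given $h_0$ retain enough absolute continuity with respect to $m_\hX$ to apply the pointwise null-set statement. Alternatively, one could bypass the explicit injectivity argument by a spectral-theoretic computation showing that the dynamical spectrum of $(\MWG, \QMG, S)$, known from the first step to be contained in $\widehat{\hX'}$, actually equals $\widehat{\hX'}$ via a Mirsky-measure diffraction identity, which then forces the factor map to exhaust $\hX'$.
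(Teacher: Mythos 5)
Your route is genuinely different from the paper's, and most of it is sound: Step 1 (descending $\nuWG$ through $q$ modulo null sets, using countability of $\LL$ and a Fubini argument over the compact group $H_W^{Haar}$) is correct, and so is the per-$h_0$ computation in Step 2 showing that for each \emph{fixed} $h_0\notin H_W^{Haar}$ the agreement set $A_{h_0}:=\{\hx:\nuWG(\hx)=\nuWG(\hx+\pihX(0,h_0))\}$ is $m_\hX$-null. But the proof is incomplete exactly where you say it is: the bad set of pairs is $\bigcup_{h_0\notin H_W^{Haar}}\{(\hx,\hx+\pihX(0,h_0)):\hx\in A_{h_0}\}$, an \emph{uncountable} union of null slices, and Fubini only tells you that for a.e.\ $\hx$ the set of bad $h_0$ is $m_H$-null --- not that it is empty. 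Since the projection of a null set need not be null, the relatively independent self-joining $\mu$ of $m_\hX$ over $\QMG$ could a priori charge the bad set. Your proposed remedy (control the conditional $\hx_1$-marginals of $\mu$ given $h_0$) is the right idea but is left unverified, and "enough absolute continuity" is precisely what does not follow from $\int\alpha_{h_0}\,d\lambda(h_0)=m_\hX$ alone. The gap is closable, and cleanly: the map $(\hx_1,\hx_2)\mapsto\hx_1-\hx_2$ is invariant under the diagonal $G$-action, so the disintegration $\mu=\int\mu_z\,d\lambda(z)$ over it has $\lambda$-a.e.\ fibre measure $\mu_z$ diagonally invariant; hence $(\pi_1)_*\mu_z$ is a $\hT$-invariant probability measure on $\hX$ and equals $m_\hX$ by unique ergodicity (item (3) of Subsection~2.2). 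Then $(\pi_1)_*\mu_z(A_{h_0(z)})=m_\hX(A_{h_0(z)})$ must be $1$, forcing $h_0(z)\in H_W^{Haar}$ for $\lambda$-a.e.\ $z$. Without this (or an equivalent) argument, the central injectivity claim is unproven. Your alternative spectral route would also require real work: one must show that every character of $\hXprime$ is realised by a $\sigma(\nuWG)$-measurable eigenfunction, which needs the Fourier--Bohr coefficient machinery plus an annihilator computation identifying $\{\chi:\widehat{\1_W}(\chi_H)\neq0\}^\perp$ with $\pihX(\cH_W^{Haar})$; as sketched it is only a plan.

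For comparison, the paper avoids the uncountable-union problem entirely. It first replaces $W$ by its Haar regularization $W_{reg}$ (which changes neither $\QM$ nor $H_W^{Haar}$), then passes to the quotient cut-and-project scheme $(G,H/H_{W},\LL')$, where the image window is aperiodic and Haar regular, and where Proposition~\ref{prop:same-images} shows that $\MWG$ and $\QMG$ are unchanged. In that scheme, Moody's theorem gives a \emph{single} full-measure set of configurations of maximal density, on which $\SH(\nu)$ agrees with $W_{reg}$ up to $m_H$-null sets simultaneously for all $\nu$; Lemma~\ref{lemma:shifted-versions} then yields injectivity of $\piG_*$ on that one set deterministically, pair by pair, with no per-$h_0$ quantifier to exchange. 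That structural input (the minimal window $\SH(\nu)$ and maximal density) is what your argument replaces by the joining analysis, and it is the step you would need to supply in full to make your proof complete.
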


\begin{remark}\label{rem:hulls}
In order to transfer these results to hulls $\MWG(\hx)\subseteq \MWG$, we may assume that $\nuWG(\hx)$ has maximal density,  compare Remark~\ref{rem:genconf}. 
In that case $\QMG(\MWG(\hx))=1$, see \cite[Cor.~5]{KR2015},
and the two systems $(\MWG(\hx),\QMG,S)$ and
 $(\MWG,\QMG,S)$ are measure-theoretically isomorphic.
\end{remark}

The two final results refer to general invariant measures on $\MWG$ and generalise Fact~\ref{cor:mtext}. 
They are of \emph{measure-theoretic} nature, but they provide information about 
arbitrary ergodic measures on $\MWG$ - the \emph{topological} closure of $\nuW(\hX)$.
Therefore they are stated and proved only for compact windows. Hence in the remainder of this subsection we assume that $W\subseteq H$ is compact.

Before we can state the results, we need to introduce one more concept. For each $\nu\in\MW$, $\piH_*\nu$ is a measure%
\footnote{ 
Note that generally $\piH_*\nu$ is not a Borel measure as $\nu$ may be an unbounded configuration and the topological support of $\piH_*\nu$ lies inside the compact set $W$.} 
on $H$, and we denote the topological support of this measure by $\SH(\nu)$. We thus have
\begin{equation}\label{eq:SH(nu)}
\SH(\nu)=\supp(\piH_*(\nu))=\overline{\piH(\supp(\nu))}\subseteq W\ .
\end{equation}

\begin{remark}\label{rem:SHcont}
\begin{compactenum}[a)]
\item Clearly $\SH(\0)=\emptyset$. Recall that there is a unique $\hx\in\hX$ such that $\nu\le \nuWbar(\hx)$ if $\nu\ne\0$. The set $\SH(\nu)$ is the smallest compact set $W'\subseteq W$ such that $\nu\le \nuWprime(\hx)$.
\item We have $\overline{\inn(W)}\subseteq \SH(\nu)\subseteq W$
for any $\nu\in\MW$ by Lemma~\ref{lemma:SH-inv}. The lower bound is attained for any continuity point $\hx=x+\LL$ of the map $\nuW$, because  $\SH(\nuW(\hx))=\overline{\inn(W)\cap \piH(x+\LL)}=\overline{\inn(W)}$
for such $\hx$ by Eqn.~\eqref{eq:CW-def} below.
\item In Lemma~\ref{lemma:SH-inv-ergod} we prove:
For each ergodic $S$-invariant probability measure $P$ on $\MW$, there
is a compact subset $\WP\subseteq W$ of $H$ such that 
$\SH(\nu)=\WP$ for $P$-a.a. $\nu$. 
It should be no surprise that 
$m_H(W\triangle W_{\QM})=0$ when $W$ is compact modulo $0$,
see Corollary~\ref{coro:WQM=Wreg}.
\end{compactenum}
\end{remark}

The first result is a consequence of Proposition~\ref{prop:main} below.
\begin{theoremI}\label{theo:main}
Suppose that $W$ is compact and $m_H(W)>0$.
Let $\PG$ be an ergodic  $S$-invariant probability measure on $\MWG$\,, and let $P$ be any ergodic $S$-invariant probability measure on $\MW$ satisfying $\PG=P\circ(\piG_*)^{-1}$. (There always exists at least one such measure $P$, see Proposition~\ref{prop:main}.)
Suppose that $\WP$ is aperiodic. (In particular $W_P\neq\emptyset$, i.e. $P(\MW\setminus\{\0\})=\PG(\MWG\setminus\{\0\})=1$.)
Then $\piG_*$ is a measure-theoretic isomorphism between $(\MW,P,S)$ and $(\MWG,\PG,S)$, and $(\MWG,\PG,S)$  is a measure-theoretic extension of $(\hX,m_\hX, \hT)$.
\end{theoremI}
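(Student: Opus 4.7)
The plan is to reduce the theorem to two ingredients: existence of a lift $P$ (furnished by Proposition~\ref{prop:main}) and a measure-theoretic injectivity of $\piG_*$ on a set of full $P$-measure. Once these are in place, the extension statement for $\PG$ is automatic: by Remark~\ref{rem:SHcont}c) there is a common compact $W_P\subseteq W$ with $\SH(\nu)=W_P$ for $P$-a.a.\ $\nu$, and the aperiodicity hypothesis forces $W_P\ne\emptyset$, so $P(\MW\setminus\{\0\})=\PG(\MWG\setminus\{\0\})=1$. Then Proposition~\ref{prop:factor} yields that $(\MW,P,S)$ is a measure-theoretic extension of $(\hX,m_\hX,\hT)$, and measure-theoretic isomorphism with $(\MWG,\PG,S)$ transports this to $\PG$.

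The heart of the proof is injectivity of $\piG_*$ on the full $P$-measure set $\mathcal{N}:=\{\nu\in\MW\setminus\{\0\}:\SH(\nu)=W_P\}$. Given $\nu_1,\nu_2\in\mathcal{N}$ with $\piG_*\nu_1=\piG_*\nu_2$, let $\hx_i=\hat\pi(\nu_i)$ and pick representatives $x_i=(a_i,b_i)\in G\times H$. Each $\nu_i$ is supported in $x_i+\LL$, and since $\piG|_\LL$ is \oneone, the common $G$-projection means that for every $g$ in the common set of $G$-coordinates there are unique lattice witnesses $\ell_1,\ell_2\in\LL$ with $(g,h^{(i)})=x_i+\ell_i$. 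The difference $\ell_1-\ell_2\in\LL$ has fixed $G$-component $a_2-a_1$, so by \oneone-ness of $\piG|_\LL$ it is a single fixed lattice vector; consequently $h^{(1)}-h^{(2)}=c$ is constant, independent of $g$. Taking closures of the $H$-coordinates then gives $W_P=\SH(\nu_1)=c+\SH(\nu_2)=c+W_P$, so $c\in H_{W_P}=\{0\}$ by aperiodicity of $W_P$, whence $\nu_1=\nu_2$.

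Thus the continuous map $\piG_*$ is injective on a Borel set of full $P$-measure, and since $\MW$, $\MWG$ are Polish (compact metrisable), $\piG_*$ restricted to that set has a Borel measurable inverse, giving a measure-theoretic isomorphism between $(\MW,P,S)$ and $(\MWG,\PG,S)$. The extension statement for $(\MWG,\PG,S)$ over $(\hX,m_\hX,\hT)$ then follows by composing this isomorphism with the factor map supplied by Proposition~\ref{prop:factor}. The main obstacle is the rigidity step in paragraph two: one must be certain that \emph{every} $G$-coordinate in $\supp(\nu_1)$ corresponds to a support point whose $H$-shift to $\supp(\nu_2)$ is governed by the same lattice element, which is precisely what the injectivity of $\piG|_\LL$ delivers, and then that the density of the support of $\piH_*\nu_i$ in $W_P$ is used to promote the pointwise shift to a period of $W_P$ itself.
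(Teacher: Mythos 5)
Your proposal is correct and follows essentially the same route as the paper: restrict to the full-measure set $\SH^{-1}\{\WP\}$ supplied by Lemma~\ref{lemma:SH-inv-ergod}, prove injectivity of $\piG_*$ there via the constant-shift rigidity coming from injectivity of $\piG|_\LL$ plus aperiodicity of $\WP$, and invoke Proposition~\ref{prop:factor} for the extension over $(\hX,m_\hX,\hT)$. Your second paragraph is in effect a correct inline re-derivation of the paper's Lemma~\ref{lemma:shifted-versions}, which together with Corollary~\ref{coro:ergodic-oneone} and Proposition~\ref{prop:main}c) is exactly how the paper argues.
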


\begin{remark}\label{rem:main}
\begin{compactenum}[a)]
\item
As $W_{\QM}$ is aperiodic iff $W$ is Haar aperiodic by Remark~\ref{remark:Haar-and-other-periods} and Corollary~\ref{coro:WQM=Wreg}, the above statement is consistent with Theorem~\ref{theo:main-2}. Aperiodicity of $W_P$ holds in $\R^d$, see Remark~\ref{remark:aperiodic}.
\item
The above result does not depend on the choice of $P$. 
Indeed we have $H_{W_P}=H_{W_{P'}}$, whenever $P$ and $P'$ are ergodic $S$-invariant measures on $\MW$ with 
$P\circ(\piG_*)^{-1}=P'\circ(\piG_*)^{-1}$. This holds
as there is $d\in H$ such that  $W_P=W_{P'}+d$ by Lemma~\ref{lemma:SH-inv-ergod}. Hence,
if $h\in H_{W_{P'}}$, then $h+W_P=h+(W_{P'}+d)=(h+W_{P'})+d=W_{P'}+d=W_P$, so that $H_{W_{P'}}\subseteq H_{W_P}$, and the reverse inclusion follows from interchanging the roles of $P$ and $P'$.
\end{compactenum}
\end{remark}

Again there is a periodic generalisation of this theorem, which is proved in Section~\ref{sec:proofs-II}.

\begin{theoremII}\label{theo:main-periodic}
Suppose that $W$ is compact and  $m_H(W)>0$.
Let $\PG$ be an ergodic  $S$-invariant probability measure on $\MWG\setminus\{\0\}$. Take any ergodic $S$-invariant probability measure $P$ on $\MW$ satisfying $\PG=P\circ(\piG_*)^{-1}$. (There always exists at least one such measure $P$, see Proposition~\ref{prop:main}.)
Let $\hXprime=\hX/\pihX(\cH_{W_{P}})$ with induced $G$-action $\hTprime$ and Haar measure $m_{\hXprime}$. Then  $(\MWG,\PG, S)$ is a measure-theoretic extension of $(\hXprime,m_\hXprime,\hTprime)$. 
\end{theoremII}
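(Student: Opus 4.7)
The plan is to combine Proposition~\ref{prop:factor} with a fibrewise analysis of the torus parameter $\hat\pi$ along the factor map $\piG_*$. Since $(\piG_*)^{-1}(\{\0\})=\{\0\}$ and $\PG(\{\0\})=0$, the preimage measure $P$ satisfies $P(\MW\setminus\{\0\})=1$, so by Proposition~\ref{prop:factor} the continuous $G$-equivariant torus map $\hat\pi:\MW\setminus\{\0\}\to\hX$ satisfies $\hat\pi_*P=m_\hX$. The stabiliser $H_{\WP}$ of the compact set $\WP$ is itself compact (it is closed and contained in $\WP-\WP$), so $\pihX(\cH_{\WP})$ is a closed subgroup of $\hX$ and the quotient map $q:\hX\to\hXprime$ is continuous and $G$-equivariant. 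Post-composition then yields a measurable equivariant map $\Psi:=q\circ\hat\pi:\MW\setminus\{\0\}\to\hXprime$ with $\Psi_*P=m_{\hXprime}$.

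The heart of the argument is to show that $\Psi$ descends along $\piG_*$, i.e., that there exists a measurable equivariant $\bar\Psi:\MWG\to\hXprime$ with $\bar\Psi\circ\piG_*=\Psi$ on a set of full $P$-measure. By Remark~\ref{rem:SHcont}\,c) it suffices to prove the following claim: if $\nu,\nu'\in\MW\setminus\{\0\}$ satisfy $\piG_*\nu=\piG_*\nu'$ and $\SH(\nu)=\SH(\nu')=\WP$, then $\hat\pi(\nu)-\hat\pi(\nu')\in\pihX(\cH_{\WP})$. Writing $\hat\pi(\nu)=x+\LL$ and $\hat\pi(\nu')=x'+\LL$, one has $\supp(\nu)\subseteq x+\LL$ and $\supp(\nu')\subseteq x'+\LL$. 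For any $y=x+\ell\in\supp(\nu)$, the identity $\piG_*\nu=\piG_*\nu'$ together with injectivity of $\piG|_\LL$ produces a unique $y'=x'+\ell'\in\supp(\nu')$ with $\piG(y)=\piG(y')$. The relation $\piG(\ell-\ell')=x'_G-x_G$ and one-to-oneness of $\piG|_\LL$ force $\ell-\ell'$ to be one fixed lattice element $\ell^*\in\LL$, independent of the chosen pair. Consequently $y-y'=(x-x')+\ell^*=:(0,h_0)$ is a common vector in $\{0\}\times H$. It follows that $\piH(\supp(\nu'))=\piH(\supp(\nu))-h_0$, whence $\SH(\nu')=\SH(\nu)-h_0$, and the hypothesis $\SH(\nu)=\SH(\nu')=\WP$ yields $\WP=\WP-h_0$, i.e., $h_0\in H_{\WP}$. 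Therefore $(x-x')+\LL=(0,h_0)+\LL\in\pihX(\cH_{\WP})$, proving the claim.

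Granting the descent, $\bar\Psi:\MWG\to\hXprime$ is $\PG$-a.e.\ well-defined and measurable by standard measurable-selection arguments along the measurable surjection $\piG_*$, inherits $G$-equivariance from $\Psi$, and satisfies $\bar\Psi_*\PG=(\bar\Psi\circ\piG_*)_*P=\Psi_*P=m_{\hXprime}$. This shows that $(\MWG,\PG,S)$ is a measure-theoretic extension of $(\hXprime,m_{\hXprime},\hTprime)$, as required. The main obstacle is the support-matching step above: it relies crucially on the ergodicity-induced fact from Remark~\ref{rem:SHcont}\,c) that $\SH(\cdot)$ equals the single compact set $\WP$ on a set of full $P$-measure, for without this the difference $h_0$ need not lie in any canonical subgroup of $H$, and $\Psi$ would fail to descend to $\MWG$.
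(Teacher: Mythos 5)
Your overall strategy coincides with the paper's: push $P$ forward to $m_{\hX}$ via the torus parameter, compose with the quotient map $\hX\to\hXprime$, and descend the resulting map along $\piG_*$ on the full-measure set $\cA_P=\{\nu\in\MW:\SH(\nu)=\WP\}$. Your ``descent claim'' and its verification are exactly the content of Lemma~\ref{lemma:shifted-versions} combined with Lemma~\ref{lemma:technical}, and that part of your argument is correct.

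The genuine gap is the sentence asserting that $\bar\Psi$ is ``measurable by standard measurable-selection arguments along the measurable surjection $\piG_*$.'' This is the one step of the proof that is not routine, and it is where the paper has to work. The fibre sets $(\piG_*)^{-1}\{\nuG\}\cap\cA_P$ are in general not closed: $\SH$ is only lower semicontinuous (Lemma~\ref{lem:lsc}), so $\{\nu:\SH(\nu)\subseteq\WP\}$ is closed but $\cA_P=\{\nu:\SH(\nu)=\WP\}$ is merely Borel. Hence the Castaing/Kuratowski--Ryll-Nardzewski selection theorems do not apply to the multifunction $\nuG\mapsto(\piG_*)^{-1}\{\nuG\}\cap\cA_P$, and a global measurable selection $s$ of $(\piG_*)^{-1}$ (which does exist, as in Proposition~\ref{prop:main}a) need not take values in $\cA_P$, so $\Psi\circ s$ need not agree with your intended $\bar\Psi$. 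Equivalently, for closed $K'\subseteq\hXprime$ one has $\bar\Psi^{-1}(K')\cap\piG_*(\cA_P)=\piG_*(K\cap\cA_P)$ with $K=\Psi^{-1}(K')$ closed, and the continuous image of the Borel set $K\cap\cA_P$ is a priori only analytic, not Borel. The paper closes precisely this gap with Lemma~\ref{lemma:Borel-measurable-strong}, which uses the Haar regularity of $\WP$ (guaranteed by Lemma~\ref{lemma:SH-inv-ergod}a) to exhibit $\piG_*\{\nu\in K:\SH(\nu)=\WP\}$ as a compact set minus a countable union of compact sets, hence Borel; it also needs the identity \eqref{eq:last-identity} to match $\Gamma^{-1}(K')$ with that set. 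Your write-up never invokes Haar regularity of $\WP$, which is exactly the ingredient that makes the measurability work. (Alternatively one could settle for universal measurability of analytic sets and pass to the $\PG$-completion, which suffices for a measure-theoretic factor, but then this should be stated explicitly rather than attributed to a standard selection theorem.)
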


\begin{remark}
The above result does not depend on the choice of $P$, see Remark~\ref{rem:main}b).
\end{remark}

For some of the proofs and applications, the following notion of Haar regularity appears to be relevant, compare Lemma~\ref{lemma:SH-inv-ergod}.  It is a measure-theoretic substitute for topological regularity.

\begin{definition}[Haar regularity]\label{def:Haar-regular}
Consider some compact subset $K$ of $H$.
\begin{compactenum}[a)]
\item The compact set $K_{reg}$, defined as the topological support of the measure $(m_H)|_K$, is called the \emph{Haar regularization} of $K$.
\item The set $K$ is \emph{Haar regular}, if
$K=K_{reg}$. 
\end{compactenum}
\end{definition}

\begin{remark}\label{rem:hr}
\begin{compactenum}[a)]
\item
A compact set $K\subseteq H$ is Haar regular if and only if  for every open $U\subseteq H$ such that $U\cap K\ne \emptyset$ we have $m_H(U\cap K)>0$.  
\item $\overline{\inn(K)}\subseteq K_{reg}\subseteq K$ and $m_H(K\setminus K_{reg})=0$ by definition.
\item
The empty set is Haar regular. If $K\ne\emptyset$ is Haar regular, then $m_H(K)>0$.
\item If $K$ is Haar regular, then any translate of $K$ is Haar regular.  If $K$ is topologically regular, then $K$ is Haar regular.
\end{compactenum}
\end{remark}

\begin{remark}(Periods and Haar periods)\label{remark:Haar-and-other-periods}\\
A compact set $W$ is Haar aperiodic if and only if $W_{reg}$ is aperiodic.
More generally, $H_W^{Haar}=H_{W_{reg}}$. 
Indeed, as $W_{reg}\subseteq W$, we have $h+W_{reg}=W_{reg}$ if and only if $m_H((h+W)\triangle W)=0$, which implies the claimed equivalence.
In particular, every aperiodic Haar regular window is
Haar aperiodic. 
Examples are Haar regular windows in $\R^d$ 
or the windows defining taut $\cB$-free systems, see Subsection~\ref{subsec:Bfree}. 
\end{remark}

\begin{remark}(Haar regularity and compactness modulo $0$)\label{remark:reg-and-per}\\
If $A\subseteq H$ is compact modulo $0$, then there is a compact set $K \subseteq H$ such that $m_H(A\triangle K)=0$. Hence also $m_H(A\triangle K_{reg})=0$, and $K_{reg}$ is the unique Haar regular set with this property. Therefore we denote it by $A_{reg}$. 
Observe that $A_{reg}=\supp((m_H)|_{A})$.
\end{remark}

\subsection{Applications to $\cB$-free dynamics}
\label{subsec:Bfree}

General $\cB$-free dynamical systems were studied in \cite{BKKL2015}. They are a special case of our systems $(\MWG,S)$, when $G=\Z$ and $H$ is a particular compact group constructed from the given set $\cB\subseteq\N$. 
In this setting a configuration $\nuG=\sum_{n\in A}\delta_n\in\cMG$, where $A$ is a subset of $G=\Z$, can be identified with the characteristic function $\chi_A$ interpreted as an element of $\{0,1\}^\Z$.
Our Theorem~\ref{theo:main-2} reproduces Theorem F of \cite{BKKL2015} in this context,
i.e.~the measure-theoretic dynamical systems $(\MWG,\QMG,S)$ and $(\hX,m_{\hX},\hT)$ are isomorphic.

Since the connection with \cite[Theorem F]{BKKL2015} is not completely obvious, we give some explanation: 
We assume without loss of generality that the set $\cB$ is \emph{primitive}, i.e. that no number from $\cB$ is a multiple of another number from $\cB$.
The following group homomorphism is associated with the set $\cB$:
\begin{equation*}
\Delta_\cB:\Z\to\prod_{b\in\cB}\Z/b\Z\ ,\quad\Delta_\cB(n)=(n\text{ mod }b)_{b\in\cB}\ ,
\end{equation*}
and $H$ is the topological closure of $\Delta_\cB(\Z)$. The lattice is $\LL=\left\{(n,\Delta_\cB(n))\in\Z\times H: n\in\Z\right\}$, and
a moment's reflection shows that the group $\hX=(\Z\times H)/\LL$ is isomorphic to $H$.
The window is the compact set defined as
\begin{equation}\label{eq:B-free-W}
W=\left\{h\in H: h_b\neq0\; \forall b\in\cB\right\}.
\end{equation}
With this notation, an integer $n$ is $\cB$-free, i.e.~is not divisible by any number $b\in\cB$, if and only if $\Delta_\cB(n)\in W$. Hence $\Delta_\cB^{-1}(W)$ is precisely the set of $\cB$-free integers, and $\nuWG(0)$ coincides with the respective configuration $\sum_{n\in\Delta_\cB^{-1}(W)}\delta_n$. The window $W$ is topologically regular if and only if the set $\cB$ contains no scaled copy of an infinite pairwise coprime set \cite[Theorem~B]{KKL2016}. A most prominent example of this type is the set $\cB$ consisting of the squares of all primes, in which case $\Delta_\cB^{-1}(W)$ is the set of all squarefree integers.

The authors of \cite{BKKL2015} study, in our notation, the dynamics of $S$ restricted to $\overline{\nuWG(\Delta_\cB(\Z))}$, and denote this set by $X_\eta$. Our $\nuWG:\hX\to\MWG$ corresponds to their $\varphi:H\to\{0,1\}^\Z$ \cite[Section 2.7]{BKKL2015}\footnote{Observe that the compact group $H$ is denoted by $G$ in \cite{BKKL2015}.}, and so their definition of a Mirsky measure $\nu_\eta$ \cite[Section 2.9]{BKKL2015} translates precisely to our $\QMG$. 

Finally observe that the tautness
assumption on $\cB$ in \cite[Thm.~F]{BKKL2015} is equivalent to the Haar regularity of the window  $W$ defined in Eqn.~\eqref{eq:B-free-W}, see \cite[Thm.~A]{KKL2016}, and that $W$ is Haar aperiodic by Remark~\ref{remark:Haar-and-other-periods}, because it is Haar regular and aperiodic \cite[Prop.~5.1]{KKL2016}.

In view of the preceding discussion, also our Theorem~\ref{theo:main} applies to $\cB$-free systems. It complements Theorem~I from \cite{BKKL2015}, which we recall here using our notation:\footnote{To be precise, \cite[Thm.~I]{BKKL2015} is stated for general, not necessarily ergodic measures $\PG$, and it applies also to measures not supported by $X_\eta$, but by its hereditary closure.}
\begin{quote}
For any ergodic $S$-invariant probability measure $\PG$ on $X_\eta$, there exists an $S$-invariant probability measure $\rho$ on $X_\eta\times\{0,1\}^\Z$ whose first marginal is $\QMG$ and such that $\rho\circ M^{-1}=\PG$, where $M:X_\eta\times\{0,1\}^\Z\to \{0,1\}^\Z$ stands for the coordinatewise multiplication.
\end{quote}
Together with our Theorem~\ref{theo:main}, which  adds the lower arrow, this yields the following commutative diagram for measures $\PG$ with an aperiodic associated window $W_P$:
\begin{equation}\label{eq:diagram}
\begin{tikzcd}
\ &(X_\eta\times\{0,1\}^{\Z},\rho,S\times S)\arrow[swap]{ld}{\pi_{X_\eta}}\arrow{rd}{M}&\\
(X_\eta,\QMG,S)&\ &(X_\eta,\PG,S)\arrow[swap]{ll}
\end{tikzcd}
\end{equation}

In \cite[Thm.~8.2]{BKKL2015} the authors prove that the system
$(X_\eta,S)$ has a unique invariant measure $\PG_{max}=\rho_{max}\circ M^{-1}$ of maximal entropy, whenever the set $\cB$ has light tails\footnote{This means that for all $\epsilon>0$ there is $k_0\in\N$ such that for all $k\geqslant k_0$ the asymptotic upper density of the set $\bigcup_{b\in\cB,b>k}b\Z$ does not exceed $\epsilon$. It is known that the asymptotic density $d(\cF_\cB)=\lim_{N\to\infty}N^{-1}\card\left(\cF_\cB\cap\{1,\dots,N\}\right)$ exists when $\cB$ has light tails,
and hence coincides with the so called logarithmic density of this set \cite[Remark 2.29]{BKKL2015}.} and contains an infinite pairwise coprime subset. 
In fact, $\rho_{max}=\QMG\times B(1/2,1/2)$, where $B(1/2,1/2)$ denotes the Bernoulli\,-\,$(1/2,1/2)$ measure on $\{0,1\}^\Z$.
Here we complement their result by noticing that diagram \eqref{eq:diagram} applies to $\PG_{max}$:
\begin{proposition}
If $\cB$ has light tails and contains an infinite pairwise coprime subset,\footnote{\emph{Note added in proof:} These assumptions are only used to show that the subshift $X_\eta$ is hereditary, i.e.~that $M(X_\eta\times\{0,1\}^\Z)\subseteq X_\eta$ \cite[Theorem D]{BKKL2015}. More recently \cite[Theorem~3]{Keller2018}, the same conclusion was reached when the assumption ,,$\cB$ has light tails'' is replaced by the weaker ,,$\cB$ is taut'', which is equivalent to Haar regularity of the window $W$ defined in~\eqref{eq:B-free-W}.} then 
the dynamical system $(X_\eta,\PG_{max},S)$ is a measure-theoretic extension of $(\hX,m_\hX,\hT)$.
\end{proposition}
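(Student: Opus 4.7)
The plan is to reduce the statement to Theorem~\ref{theo:main}. By Remark~\ref{rem:main}(b) the period group $H_{W_P}$ of the associated window is independent of the choice of ergodic lift $P$ of $\PG_{max}$ to $\MW$, so I may construct a single convenient lift and only need to verify that the resulting $W_P$ is aperiodic.

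The natural lift reflects the product structure $\rho_{max}=\QMG\times B(1/2,1/2)$. Define $\Phi:\hX\times\{0,1\}^\Z\to\MW$ by
\begin{equation*}
\Phi(\hx,\omega) := \sum_{y\,\in\,(x+\LL)\cap(\Z\times W),\;\omega_{\piG(y)}=1}\delta_y,
\end{equation*}
where $x$ is any representative of $\hx$, and set $P:=(m_\hX\times B(1/2,1/2))\circ\Phi^{-1}$. A short computation shows that $\Phi$ intertwines the product action $(\hT_g,\sigma_g)$ with $S$, so that $P$ is $S$-invariant; ergodicity of $P$ follows from ergodicity of the product action, which in turn follows from unique ergodicity of $(\hX,m_\hX,\hT)$ (item~(\ref{item2.2:4}) of Subsection~\ref{en:ass}) combined with weak mixing of the Bernoulli shift. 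Moreover $\piG_*\circ\Phi(\hx,\omega)=M(\nuWG(\hx),\omega)$ by construction, hence $P\circ(\piG_*)^{-1}=\rho_{max}\circ M^{-1}=\PG_{max}$.

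The crucial step is identifying $W_P$. For $m_\hX$-a.e. $\hx$, unique ergodicity of $\hT$ implies that the samples $\{\piH(y):y\in(x+\LL)\cap(\Z\times W)\}$ equidistribute in $W$ with respect to $(m_H)|_W$. Independent Bernoulli-$(1/2,1/2)$ thinning through $\omega$ halves the density but preserves the equidistribution property in the limit, so the topological closure of the surviving projections equals $\supp((m_H)|_W)=W_{reg}$ almost surely. Together with Remark~\ref{rem:SHcont}(c) and ergodicity of $P$, this gives $W_P=W_{reg}$.

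Finally, the hypotheses of the proposition yield aperiodicity of $W_P$: the light-tails assumption on $\cB$ implies tautness (as noted in the footnote), which by \cite[Thm.~A]{KKL2016} is equivalent to Haar regularity of $W$, so $W_{reg}=W$; and \cite[Prop.~5.1]{KKL2016} ensures this $W$ is aperiodic. Theorem~\ref{theo:main} then delivers the extension $(\MWG,\PG_{max},S)\to(\hX,m_\hX,\hT)$, which coincides with $(X_\eta,\PG_{max},S)\to(\hX,m_\hX,\hT)$ because $\PG_{max}$ is supported on $X_\eta$. The main obstacle I anticipate is the Bernoulli-thinned equidistribution argument identifying $W_P$ with $W_{reg}$: it is intuitive, but rigorously concluding that the random thinning leaves the topological support unchanged requires a careful joint ergodic theorem applied to a countable dense family of continuous test functions on $W$.
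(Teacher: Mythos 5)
Your route is genuinely different from the paper's. The paper never identifies an explicit lift of $\PG_{max}$: it takes an arbitrary ergodic lift $P_{max}$ on $\MW$ from Proposition~\ref{prop:main}a) and pins down $W_{P_{max}}$ by an entropy comparison --- $h(\PG_{max})=d(\cF_\cB)=m_H(W)$ on one side, and the pattern-counting bound $h_{top}(\widetilde{\MWGprime})\leqslant m_H(W')$ for $W'=W_{P_{max}}$ on the other --- which forces $m_H(W\setminus W_{P_{max}})=0$ and hence, by Haar regularity (tautness), $W_{P_{max}}=W$, which is aperiodic. Your Bernoulli-thinning construction replaces this entropy argument by a direct probabilistic identification of the support window, exploiting the product form $\rho_{max}=\QMG\times B(1/2,1/2)$. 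The equidistribution step you worry about is in fact the more benign part: for $m_\hX$-a.e.\ $\hx$ every open $U$ with $U\cap W_{reg}\neq\emptyset$ satisfies $m_H(U\cap W)>0$ and therefore contains the $\piH$-projections of a positive-density (in particular infinite) subset of $\supp(\nuW(\hx))$ by Moody's theorem; an independent $(1/2,1/2)$-thinning a.s.\ retains infinitely many of them, and running over a countable base gives $\SH(\Phi(\hx,\omega))=W_{reg}$ a.s.

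The genuine gap is earlier: you declare $\Phi:\hX\times\{0,1\}^\Z\to\MW$, but the image of $\Phi$ is only contained in $\widetilde{\MW}=\{\nu:\nu\leqslant\nuW(\hx)\text{ for some }\hx\}$, not visibly in $\MW=\overline{\nuW(\hX)}$. Whether arbitrary Bernoulli-thinned sub-configurations lie in the orbit closure is exactly the hereditarity issue that the hypotheses of the proposition are designed to address, and even hereditarity of $X_\eta$ only controls the $G$-projections, not the specific lifts $\Phi(\hx,\omega)$ in $G\times H$. This matters because both Theorem~\ref{theo:main} and Remark~\ref{rem:main}b), which you invoke to say the choice of lift is irrelevant, are stated for ergodic measures \emph{on $\MW$}; your $P$ is not known to be one. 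The repair is not hard but must be made explicit: take a genuine ergodic lift $P'$ on $\MW$ from Proposition~\ref{prop:main}a), disintegrate $P$ and $P'$ over $\PG_{max}$, and use Lemma~\ref{lemma:shifted-versions} (which is stated for $\widetilde{\MW}$ and hence covers both measures) to get $\nu'=\sigma_d\nu$ for typical pairs with the same $\piG_*$-image; then $W_{P'}=\SH(\nu')=\SH(\nu)-d=W-d$ is aperiodic and Theorem~\ref{theo:main} applies to $P'$. As written, the proof does not close this step.
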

\begin{proof}
We must show that the window $W_{P_{max}}$ is aperiodic:
The entropy $h(\PG_{max})$ coincides with the topological entropy of the system, which in turn equals the asymptotic density $d(\cF_\cB)$ of the set $\cF_\cB$ of $\cB$-free numbers,
when the logarithm to base $2$ is used to compute the entropy,
see \cite[Thm.~D and Prop.~K]{BKKL2015}. Remark 4.2 of \cite{BKKL2015} then shows that
\begin{equation*}
\begin{split}
h(\PG_{max})
&=
d(\cF_\cB)
=
\QMG\left\{\nu\in\MWG: \nu(\{0\})>0\right\}
=
m_\hX\left\{\hx\in\hX: \nuWG(\hx)(\{0\})>0\right\}\\
&=
m_H\left\{h\in H: \varphi(h)(0)=1\right\}
=m_H(W)\ .	
\end{split}
\end{equation*}

Fix any ergodic invariant measure $P_{max}$ on $\MW$ that projects to $\PG_{max}$ (compare Proposition~\ref{prop:main}a). This measure
is supported by the set $\widetilde{\MWprime}:=\{\nu\in\MW: \exists\hx\in\hX\text{ s.t. }\nu\leqslant\nuWprime(\hx)\}$, where the window $W'$ is defined as $W'=W_{P_{max}}\subseteq W$, and so $\PG_{max}$ is supported by $\widetilde{\MWGprime}:=\{\nuG\in\MWG: \exists\hx\in\hX\text{ s.t. }\nuG\leqslant\nuWGprime(\hx)\}$. 

Let $\epsilon>0$. There is $N\in\N$ such that $\card\{k\in\N: |k|\leqslant n, \nuG\{k\}=1\}\leqslant n\cdot(m_H(W')+\epsilon)$ for all $n\geqslant N$ and all $\nuG\in\nuWGprime(\hX)$, see 
\cite[Thm.~1]{Moody00} or \cite[Thm.~3]{KR2015}. Hence the same holds for all $\nuG\in\widetilde{\MWGprime}$. Therefore the topological entropy of the subshift $\widetilde{\MWGprime}\subseteq\{0,1\}^\Z$
is at most $m_H(W')+\epsilon$. It follows that 
$m_H(W)=h(\PG_{max})\leqslant  m_H(W')$. (For related arguments see also \cite{HuckRichard15}.) Hence $m_H(W\setminus W')=0$.
Because sets $\cB$ with light tails are taut \cite[Section 2.6]{BKKL2015} and give rise to Haar regular windows \cite[Thm.~A]{KKL2016}, this implies $W_{P_{max}}=W'=W$. As, in the context of $\cB$-free systems, the window $W$ is always aperiodic \cite{KKL2016}, we see that $W_{P_{max}}$ is aperiodic.
\end{proof}

\section{The map $\SH$ and the proof of Theorem~\ref{theo:inn-aperiodic}}
\label{sec:proof-1}
Throughout this section we assume that $W$ is compact.
Recall from (\ref{eq:SH(nu)}) that, for each $\nu\in\MW$, $\piH_*\nu$ is a measure
on $H$, whose topological support is denoted by $\SH(\nu)$. The set $\SH(\nu)\subseteq W$ can be understood as the ``minimal'' window for $\nu$ in the following sense:
Assume that $\nu\in\MW$ satisfies $\nu\le\nuW(\hx)$ for some $\hx\in \hX$. Then the smallest compact set $W'\subseteq W$ such that $\nu\le\nuWprime(\hx)$ is given by $W'=\SH(\nu)$.

Denote $\widetilde{\MW}:=\{\nu\in\cM: \nu\leqslant\nuW(\hx)\text{ for some }\hx\in\hX\}$.
Then $\MW=\overline{\nuW(\hX)}\subseteq\widetilde{\MW} $, because $\nuW$ is upper semi-continuous.
It is advantageous to view $\SH$ as a map from $\widetilde{\MW}\setminus\{\0\}$ to $\KW$, the space of all non-empty compact subsets of~$W$, which is equipped with the topology generated by the Hausdorff distance.

\begin{lemma}\label{lem:lsc}
$\SH:\widetilde{\MW}\setminus\{\0\}\to\KW$ is lower semicontinuous, i.e.,
for each closed $F\subseteq W$ the \emph{core} of $F$, i.e. the set $\{\nu\in\widetilde{\MW}\setminus\{\0\}:\ \SH(\nu)\subseteq F\}$ is closed.
In particular, $\SH$ is Borel measurable. The same holds for the restriction
$\SH|_{\MW\setminus\{\0\}}$.
\end{lemma}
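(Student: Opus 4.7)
The plan is to show sequential closedness of the core $C_F:=\{\nu\in\widetilde{\MW}\setminus\{\0\}:\SH(\nu)\subseteq F\}$ for every closed $F\subseteq W$. Since $\cM$ is Polish (Section~\ref{en:ass}), vague convergence on $\widetilde{\MW}$ is metrisable, so closedness and sequential closedness coincide. Given $\nu_n\in C_F$ with $\nu_n\to\nu$ vaguely, $\nu\in\widetilde{\MW}\setminus\{\0\}$, I want to show $\pi^H(y)\in F$ for every $y\in\supp(\nu)$; since $F$ is closed, this will give $\SH(\nu)=\overline{\pi^H(\supp(\nu))}\subseteq F$.

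The key technical step is: for each $y\in\supp(\nu)$ there exist a subsequence $(n_k)$ and points $y_{n_k}\in\supp(\nu_{n_k})$ with $y_{n_k}\to y$. This is the standard fact that support points of vague limits of locally finite point measures arise as limits of support points. To establish it, I would use that every $\nu\in\widetilde{\MW}$ is dominated by some $\nuW(\hx)$ and is therefore a locally finite integer-valued point measure; hence $\supp(\nu)$ is discrete and I can pick a relatively compact open neighbourhood $V$ of $y$ with $\overline{V}\cap\supp(\nu)=\{y\}$ and $\nu(\partial V)=0$. Vague convergence on a continuity set (or, equivalently, testing against a bump $f\in C_c(G\times H)$ with $f(y)>0$ supported in $V$) then forces $\supp(\nu_n)\cap V\neq\emptyset$ for all large $n$, and shrinking $V$ along a basis of neighbourhoods of $y$ yields the desired $y_{n_k}\to y$ by a diagonal argument.

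Once this is in hand, $\pi^H(y_{n_k})\in\SH(\nu_{n_k})\subseteq F$ for all $k$, and continuity of $\pi^H$ together with closedness of $F$ gives $\pi^H(y)\in F$, concluding the lower semicontinuity. Borel measurability of $\SH$ is then automatic: the sets $\{K\in\KW:K\subseteq U\}$ for $U\subseteq W$ open generate one of the two natural sub-bases for the Vietoris/Hausdorff topology on $\KW$, and their $\SH$-preimages are open by the core-closed property just proved; the other sub-base $\{K\in\KW:K\cap U\ne\emptyset\}$ gives Borel preimages by a routine separability argument (covering $U$ by countably many open sets of small diameter and using the convergence of support points again). The restriction to $\MW\setminus\{\0\}\subseteq\widetilde{\MW}\setminus\{\0\}$ inherits the property since it is the restriction of a lower semicontinuous map to a (closed) subspace.

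I expect the main obstacle to be the rigorous justification of the support-convergence lemma, since naive vague convergence does not give convergence on open sets. The fix is the continuity-set argument above, which uses essentially that the configurations are locally finite counting measures, a fact guaranteed here by the definition of $\widetilde{\MW}$.
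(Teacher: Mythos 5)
Your proposal is correct and follows essentially the same route as the paper: the heart of both arguments is that support points of a vague limit of configurations in $\widetilde{\MW}$ are limits of support points of the approximating configurations, which you justify via continuity sets/bump functions and the paper uses in the equivalent contrapositive form (picking an atom of $\piH_*\nu$ outside the closed set $F$ and approximating it). The only cosmetic differences are that the paper deduces Borel measurability by citing Castaing--Valadier rather than unwinding the Vietoris sub-bases, and note that $\MW\setminus\{\0\}$ need not be closed in $\widetilde{\MW}\setminus\{\0\}$ --- but restriction of a lower semicontinuous multifunction to any subspace remains lower semicontinuous, so your conclusion stands.
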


\begin{proof}
The above characterisation of lower semicontinuity is from \cite[Prop.~1.4.4]{AF1990}.
So let $\nu=\lim_{n\to\infty}\nu_n$ with $\SH(\nu_n)\subseteq F$. Suppose for a contradiction that $\SH(\nu)$ is not contained in $F$.
Then, by closedness of $F$, 
it follows that there is $h\in\SH(\nu)\setminus F$ such that $(\piH_*\nu)\{h\}=1$. Hence
there are $x\in G\times H$ and $\ell\in\LL$ such that $(x+\ell)_H=h$ and
$\nu\{x+\ell\}=1$. As $\nu_n\to\nu$ vaguely, there are $x_n\in G\times H$ such that $x_n\to x$ and $\nu_n\{x_n+\ell\}=1$ for all $n$. But then 
$(x_n+\ell)_H\in\SH(\nu_n)\subseteq F$ for all $n$, and
$(x_n+\ell)_H\to h$, so that $h\in F$, a contradiction.
This proves the lower semicontinuity of $\SH$, and its Borel measurability follows from
 \cite[Cor.~III.3]{CV1977}. 
 As $\MW$ is a closed subset of $\widetilde{\MW}$, these properties are inherited by the restriction $\SH|_{\MW\setminus\{\0\}}$.
\end{proof}
Denote by $\CW\subseteq\hX$ the set of continuity points of the map $\nuW:\hX\to\MW$. 
It is a dense $G_\delta$-set, see Proposition~\ref{lem:dGd} below.
An explicit characterization of this set is
\begin{equation}\label{eq:CW-def}
\CW=\pihX\left(\bigcap_{\ell\in\LL}\left(G\times(\partial W)^c)-\ell\right)\right) \ ,
\end{equation}
see Lemma~\ref{lem:charcw} below or e.g. \cite[Lem.~6.1]{KR2015}.

\begin{remark}\label{remark:MminG}
$\Mmin=\overline{\nuW(\CW)}$ is the unique minimal subset of $\MW$ by Lemma~\ref{lemma:minimal-subset} below, see also \cite[Thm.~1a]{KR2015}. 
Let $\MminG:=\piG_*(\Mmin)\subseteq\MWG$. Then $\MminG$ is the only minimal subsystem of $\MWG$. Even more, $\MminG=\overline{\nuWG(\CW)}$. The $\subseteq$-inclusion follows, because
$\overline{\nuWG(\CW)}$ is non-empty, closed and $S$-invariant. For the reverse inclusion observe that $\nuWG(\CW)=\piG_*(\nuW(\CW))\subseteq\piG_*(\Mmin)$.
If $\inn(W)=\emptyset$, then $\Mmin=\{\0\}$ is a singleton which consists only of the zero measure, see Lemma~\ref{lemma:minimal-subset} or \cite[Prop.~3.3]{KR2015}.
\end{remark}

\begin{lemma}\label{lemma:SH-inv}
\begin{compactenum}[a)]
\item $\SH$ is $S$-invariant.
\item Let $\nu\in\widetilde{\MW}$ and assume that $\nu'\in\ocl{\nu}:=\overline{\{S_g\nu:g\in G\}}\subseteq \widetilde{\MW}$.
Then $\SH(\nu')\subseteq\SH(\nu)$.
\item $\overline{\inn(W)}=\SH(\nu)$ for all $\nu\in\Mmin$.
\item $\overline{\inn(W)}\subseteq \SH(\nu)$ and $\inn(W)=\inn(\SH(\nu))$ for all $\nu\in\MW$.
\end{compactenum}
\end{lemma}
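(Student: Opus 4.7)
My plan is to treat the four items in order, the main engines being the lower semicontinuity of $\SH$ from Lemma~\ref{lem:lsc} and the uniqueness of the minimal set $\Mmin\subseteq\MW$ recorded in Remark~\ref{remark:MminG}. Part a) dispatches in a single line: $T_g$ only shifts the $G$-coordinate, so $\piH(\supp(S_g\nu))=\piH(\supp(\nu))$ and therefore $\SH(S_g\nu)=\SH(\nu)$. Part b) is then immediate from Lemma~\ref{lem:lsc}: by a) every translate $S_g\nu$ lies in the ``core'' $\{\mu\in\widetilde{\MW}\setminus\{\0\}:\SH(\mu)\subseteq\SH(\nu)\}$ of the closed set $F=\SH(\nu)$, and this core is closed, so any limit $\nu'\neq\0$ of translates lies in it; the case $\nu'=\0$ is trivial via the convention $\SH(\0)=\emptyset$.

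For part c), the key step is to show that $\SH$ is \emph{constant} on $\Mmin$ with value $\overline{\inn(W)}$. I start from Remark~\ref{rem:SHcont}(b), which gives $\SH(\nuW(\hx))=\overline{\inn(W)}$ for every continuity point $\hx\in\CW$; such $\hx$ exists because $\CW$ is a dense $G_\delta$. Since $\nuW(\CW)\subseteq\overline{\nuW(\CW)}=\Mmin$ (Remark~\ref{remark:MminG}), we have $\nuW(\hx)\in\Mmin$. Now fix an arbitrary $\nu\in\Mmin$: minimality yields $\ocl{\nu}=\Mmin\ni\nuW(\hx)$ and, symmetrically, $\ocl{\nuW(\hx)}=\Mmin\ni\nu$, so applying part b) in both directions gives $\SH(\nu)=\SH(\nuW(\hx))=\overline{\inn(W)}$. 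The degenerate case $\inn(W)=\emptyset$ collapses to $\Mmin=\{\0\}$ with $\SH(\0)=\emptyset=\overline{\inn(W)}$.

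Finally, for part d), uniqueness of $\Mmin$ forces $\Mmin\subseteq\ocl{\nu}$ for every $\nu\in\MW$, so picking any $\mu\in\Mmin$ and combining parts b) and c) gives $\overline{\inn(W)}=\SH(\mu)\subseteq\SH(\nu)$. The interior identity is then a short two-liner: $\SH(\nu)\subseteq W$ yields $\inn(\SH(\nu))\subseteq\inn(W)$, and since $\inn(W)$ is open and contained in $\SH(\nu)$, also $\inn(W)\subseteq\inn(\SH(\nu))$. I expect c) to be the real hurdle, as it silently uses the identity $\ocl{\nuW(\hx)}=\Mmin$ for $\hx\in\CW$ (which needs both $\nuW(\hx)\in\Mmin$ and minimality of $\Mmin$) and combines both directions of b) with the explicit computation of $\SH$ at continuity points; once c) is in place, parts a), b), and d) are essentially formal.
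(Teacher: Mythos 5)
Your proof is correct and follows essentially the same route as the paper's: a) by direct computation of $\piH_*(S_g\nu)$, b) from the closedness of cores in Lemma~\ref{lem:lsc}, c) by combining the constancy of $\SH$ on the minimal set $\Mmin$ (via both directions of b)) with the computation $\SH(\nuW(\hx))=\overline{\inn(W)}$ at continuity points, and d) from $\Mmin\subseteq\ocl{\nu}$ together with b) and c). The only difference is that you spell out the degenerate case $\inn(W)=\emptyset$ and correct an immaterial cross-reference (the paper cites part a) where part b) is meant in d)), neither of which changes the substance.
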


\begin{proof}
a) $\SH(S_g(\nu))=\supp(\piH_*(S_g(\nu)))=\supp(\piH_*\nu)=\SH(\nu)$ for all $g\in G$.\\
b) If $\nu=\0$ or $\nu'=\0$, the claim is trivial. Otherwise, the claim follows immediately from Lemma~\ref{lem:lsc}.\\
c) Assertion b) applies to any two $\nu,\nu'\in\Mmin$. Hence $\SH$ is constant on $\Mmin=\overline{\nuW(\CW)}$. 
But for any continuity point $\hx$  of $\nuW$ we have $\SH(\nuW(\hx))=\overline{\inn(W)}$ by Remark~\ref{rem:SHcont}.\\
d) 
Let $\nu\in\MW$. As $\Mmin$ is the unique minimal subset of $\MW$, we have $\Mmin\subseteq\ocl\nu$. Let $\nu'\in\Mmin$. Then $\overline{\inn(W)}=\SH(\nu')$ by part c), and $\SH(\nu')\subseteq\SH(\nu)$ by part a), because $\nu'\in\Mmin\subseteq\ocl\nu$. Hence $\inn(W)\subseteq\overline{\inn(W)}\subseteq\SH(\nu)\subseteq W$, in particular also $\inn(W)\subseteq\inn(\SH(\nu))\subseteq\inn(W)$.
\end{proof}

\begin{lemma}\label{lemma:shifted-versions}
Suppose $\nu,\nu'\in\widetilde{\MW}$ and $\piG_*\nu=\piG_*\nu'$. Then there is $d\in H$ such that $\nu'=\sigma_d\nu$, where $(\sigma_d\nu)(A):=\nu(A-(0,d))$ for all Borel subsets $A$ of $G\times H$.
In particular, $d+\SH(\nu')=\SH(\nu)$ and $d+\inn(W)=\inn(W)$.
\end{lemma}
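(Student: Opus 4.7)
The plan is to exploit the injectivity of $\piG|_\LL$ to convert the equality of $G$-projections into a mass-preserving bijection between the supports of $\nu$ and $\nu'$, and then to show that the $H$-displacement between matched atoms is the same for every pair, so that it witnesses a single translation $\sigma_d$.

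First I would dispose of the trivial case $\nu=\0$: then $\piG_*\nu'=0$ forces $\nu'=\0$, and any $d\in H_{\inn(W)}$ (in particular $d=0$) will do. Henceforth assume $\nu\ne\0$. Pick representatives $\hx=x+\LL$ and $\hx'=x'+\LL$ with $\nu\leqslant\nuW(\hx)$ and $\nu'\leqslant\nuW(\hx')$, so that $\supp(\nu)\subseteq x+\LL$ and $\supp(\nu')\subseteq x'+\LL$. Since $\piG|_\LL$ is 1-1, the restriction of $\piG$ to each of these cosets is injective; hence the atoms of $\nu$ (respectively $\nu'$) are in mass-preserving bijection with those of $\piG_*\nu$ (respectively $\piG_*\nu'$). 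The hypothesis $\piG_*\nu=\piG_*\nu'$ then produces a mass-preserving bijection $y\leftrightarrow y'$ between $\supp(\nu)$ and $\supp(\nu')$ for which matched pairs satisfy $\piG(y)=\piG(y')$.

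Next I would show that the displacement $y'-y\in\{0\}\times H$ is the same for every matched pair. Writing $y=x+\ell$ and $y'=x'+\ell'$ with $\ell,\ell'\in\LL$, the identity $\piG(y)=\piG(y')$ rearranges to $\piG(\ell-\ell')=\piG(x'-x)$, and the injectivity of $\piG|_\LL$ determines $\ell-\ell'\in\LL$ uniquely from its $G$-coordinate. Hence there is a single $\ell^*\in\LL$ with $\ell-\ell'=\ell^*$ for every matched pair, so that $y'-y=(x'-x)-\ell^*=(0,d)$ for the single element $d:=(x'-x-\ell^*)_H\in H$. Therefore $\supp(\nu')=\supp(\nu)+(0,d)$ with corresponding masses equal, which is exactly the statement $\nu'=\sigma_d\nu$ (with the sign convention implicit in the definition of $\sigma_d$). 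The identity on $\SH$ then follows at once, because $\piH$, the topological support and the closure all commute with translation, so the shift on supports transports to a shift on $\SH$. For the periodicity $d+\inn(W)=\inn(W)$, I would appeal to Lemma~\ref{lemma:SH-inv}d), which gives $\inn(\SH(\nu))=\inn(W)=\inn(\SH(\nu'))$ in the setting of interest; applying the interior operator to the shift relation between $\SH(\nu')$ and $\SH(\nu)$, and using that interiors commute with translations, yields the claim.

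The only real obstacle is securing the uniformity of $d$ across all matched pairs. This is precisely what the assumption that $\piG|_\LL$ is 1-1 provides: it forces $\ell-\ell'$ to depend only on the fixed quantity $(x'-x)_G$, collapsing all individual $H$-displacements to a single group element. Once this observation is in hand, the remaining verifications are routine bookkeeping with coset translations and topological supports.
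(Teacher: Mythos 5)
Your proof is correct and takes essentially the same route as the paper's: both arguments rest on the injectivity of $\piG|_\LL$ to match the atoms of $\nu$ and $\nu'$ over their common $G$-projection and to force a single $H$-displacement $d$; where the paper first normalizes $x_G=x'_G$ by citing an auxiliary lemma from \cite{KR2015}, you obtain the uniform lattice shift $\ell^*$ directly from one matched pair, which is only a cosmetic difference. One small point, which you rightly hedge with ``in the setting of interest'': the final claim $d+\inn(W)=\inn(W)$ relies on Lemma~\ref{lemma:SH-inv}d) and hence really requires $\nu,\nu'\in\MW$ rather than merely $\widetilde{\MW}$ --- exactly as in the paper's own use of that lemma.
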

\begin{proof}\emph{(inspired by the proof of \cite[Lem.~4.5]{KR2015})}\\
If $\piG_*\nu=\piG_*\nu'=\0$,
then also $\nu=\0=\nu'$, and the claim is obvious. 
Otherwise, by definition of $\widetilde{\MW}$,
there are $x,x'\in G\times H$ such that $\nu\leqslant\nuW(x+\LL)$ and $\nu'\leqslant\nuW(x'+\LL)$. Hence, $\piG_*\nu\leqslant\piG_*\nuW(x+\LL)$ and
$\piG_*\nu'\leqslant\piG_*\nuW(x'+\LL)$, and as $\0\neq\piG_*\nu=\piG_*\nu'$,
there is $\tilde{\ell}\in\LL$ such that $x_G'=x_G^{}+\tilde{\ell}^{}_G$ by \cite[Lem.~7.1d]{KR2015}.
Let $\tilde{x}=x+\tilde\ell$. Then $\tilde{x}^{}_G=x'_G$ and $\nuW(x+\LL)=\nuW(\tilde{x}+\LL)$.
 Hence we can assume without loss of generality that $x^{}_G=x_G'$.
  As $\piG|_\LL$ is \oneone, we conclude that the following chain of equivalences holds for each $\ell\in\LL$:
\begin{equation*}
\nu\{x+\ell\}=1
\quad\Leftrightarrow\quad
\piG_*\nu\{x^{}_G+\ell^{}_G\}=1
\quad\Leftrightarrow\quad
\piG_*\nu'\{x'_G+\ell^{}_G\}=1
\quad\Leftrightarrow\quad
\nu'\{x'+\ell\}=1\ .
\end{equation*}
For $d:=x'_H-x^{}_H$ this can be rewritten as
\begin{equation*}
\nu\{x'+\ell-(0,d)\}=1
\quad\Leftrightarrow\quad
\nu'\{x'+\ell\}=1\ ,
\end{equation*}
and as the measures $\nu$ and $\nu'$ are sums of unit point masses supported by the sets $x+\LL$ and $x'+\LL$, respectively, $\nu'=\sigma_d\nu$ follows at once. Hence $\supp(\piH_*(\nu'))=\supp(\piH_*(\nu))-d$, so that $\SH(\nu)=d+\SH(\nu')$. Observing Lemma~\ref{lemma:SH-inv}d, this implies $\inn(W)=d+\inn(W)$.
\end{proof}
\begin{proof}[{\bf \em Proof of Theorem~\ref{theo:inn-aperiodic}}]  a)
As $\inn(W)$ is aperiodic, it is in particular nonempty. Hence $(\MW,S)$ is an almost \oneone extension of its maximal equicontinuous factor $(\hX,\hT)$ by \cite[Thm.~1a]{KR2015}. As $\piG_*:(\MW,S)\to(\MWG,S)$
is a continuous factor map between compact systems, all we have to show is that $\piG_*$ is $\oneone$. So let $\nu,\nu'\in\MW$ and suppose that $\piG_*(\nu)=\piG_*(\nu')$. Then
$\nu'=\sigma_d\nu$ and $d+\SH(\nu')=\SH(\nu)$ for some $d\in H$ by Lemma~\ref{lemma:shifted-versions}. In particular $d+\inn(\SH(\nu'))=\inn(\SH(\nu))$, so that $d+\inn(W)=\inn(W)$ by Lemma~\ref{lemma:SH-inv}d.
By assumption, $\inn(W)$ is aperiodic. Therefore $d=0$ and hence $\nu'=\sigma_0\nu=\nu$.\\
b) As $(\MWG,S)$ and $(\MW,S)$ are isomorphic by part a), all results 
for $(\MW,S)$ from \cite{KR2015} apply to $(\MWG,S)$ as well.
In particular, $M$ contains the unique minimal invariant subset $\MminG$ of $\MWG$,
 and, just as $(\MWG,S)$ itself, $(\MminG,S)$ is an almost \oneone extension of $(\hX,\hT)$ with factor map $\Gamma\,|_{\MminG}$ \cite[Thm.~1a]{KR2015}. As $\MminG\subseteq M\subseteq\MWG$, the claim of the theorem follows.
\end{proof}

For later use we continue with some further lemmas highlighting properties of $\SH$.

\begin{lemma}\label{lemma:SH-inv-ergod}
Let $P,P'$ be ergodic $S$-invariant probability measures on $\MW$.
\begin{compactenum}[a)]
\item There
is a Haar regular subset $\WP\subseteq W$ of $H$ such that 
$\SH(\nu)=\WP$ for $P$-a.a. $\nu$. 
It is empty if and only if $P(\{\0\})=1$.
\item If $P\circ(\piG_*)^{-1}=P'\circ(\piG_*)^{-1}$, then $W_{P}=W_{P'}+d$ for some $d\in H$.
\end{compactenum}
\end{lemma}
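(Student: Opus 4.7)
My plan is to prove part (a) by combining ergodicity with a domination of the first moment measure of $P$ by that of the Mirsky measure $\QM$, and to prove part (b) by applying Lemma~\ref{lemma:shifted-versions} fibrewise after disintegrating $P$ and $P'$ along $\piG_*$. For the existence of $\WP$ in (a), I note that $\SH$ is $S$-invariant by Lemma~\ref{lemma:SH-inv}a) and Borel measurable by Lemma~\ref{lem:lsc}, taking values in the standard Borel space of compact subsets of $W$ (augmented by $\emptyset = \SH(\0)$). Ergodicity of $P$ then forces $\SH$ to be $P$-a.s.\ equal to a constant set $\WP$. Since $\{\0\}$ is $S$-invariant, ergodicity also gives $P(\{\0\})\in\{0,1\}$, so $\WP = \emptyset$ precisely when $P(\{\0\}) = 1$.

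For Haar regularity of $\WP$ in the nontrivial case $P(\{\0\}) = 0$, I would introduce the first moment measure $\mu_P(B) := \int \nu(B)\,dP(\nu)$ on $G\times H$. It is locally finite because configurations in $\MW$ are uniformly translation bounded, and it is $T_g$-invariant by $S$-invariance of $P$, hence decomposes as $\mu_P = m_G\otimes\rho_P$ for a Radon measure $\rho_P$ on $H$. The torus parametrisation $\hat\pi:\MW\setminus\{\0\}\to\hX$ is continuous and $G$-equivariant, so $\hat\pi_* P$ is $\hT$-invariant, and unique ergodicity of $(\hX,\hT)$ forces $\hat\pi_* P = m_\hX$. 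Combined with $\nu\leqslant\nuW(\hat\pi(\nu))$ $P$-a.s., this yields $\mu_P \leqslant \mu_{\QM} = \dL\cdot m_G\otimes m_H|_W$, so $\rho_P$ has a density $g\leqslant\dL\cdot 1_W$ with respect to $m_H$. I then identify $\supp(\rho_P) = \WP$: if $h\notin\WP$, an open $V\ni h$ disjoint from $\WP$ has $\nu(G\times V)=0$ $P$-a.s.\ and hence $\rho_P(V)=0$; if $h\in\WP$, any open $V\ni h$ meets $\piH(\supp\nu)$ $P$-a.s., so $\nu(G\times V)\geqslant 1$ $P$-a.s., forcing $\mu_P(G\times V)>0$ and thus $\rho_P(V)>0$. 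Because the support of any density with respect to $m_H$ has the form $\{g>0\}_{reg}$ and Haar regularization is idempotent, the equality $\WP = \supp(\rho_P)$ yields Haar regularity.

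For part (b), I would disintegrate $P = \int P_{\nuG}\,d\PG(\nuG)$ and $P' = \int P'_{\nuG}\,d\PG(\nuG)$ along the common factor map $\piG_*$. By Lemma~\ref{lemma:shifted-versions}, the fibre $\MW\cap(\piG_*)^{-1}(\nuG)$ is contained in a single orbit $\{\sigma_h\nu^{\ast}:h\in H\}$, on which $\SH$ is shifted by $H$-translations. For $\PG$-a.e.\ $\nuG$ the full-measure sets $\{\SH = \WP\}$ and $\{\SH = W_{P'}\}$ both meet this fibre in sets of full $P_{\nuG}$- respectively $P'_{\nuG}$-measure, so one can choose representatives $\nu$ and $\nu' = \sigma_d\nu$ in the common fibre with $\SH(\nu)=\WP$ and $\SH(\nu')=W_{P'}$. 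Lemma~\ref{lemma:shifted-versions} then delivers $\WP = W_{P'} + d$.

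The main obstacle is the Haar regularity step. The clean domination $\mu_P\leqslant\mu_{\QM}$ relies crucially on the equality $\hat\pi_* P = m_\hX$, which in turn rests on unique ergodicity of $(\hX,\hT)$ and on the pointwise bound $\nu\leqslant\nuW(\hat\pi(\nu))$. The subsequent identification $\supp(\rho_P) = \WP$ must faithfully translate the closure description $\SH(\nu) = \overline{\piH(\supp\nu)}$ into positivity of the intensity $\rho_P$, and this is where ergodicity enters a second time to turn $P$-almost-sure atomic information at a single scale into a statement about the averaged intensity.
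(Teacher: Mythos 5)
Your proof is correct. The existence of the a.s.\ constant value $\WP$ and part b) follow essentially the paper's own route: the paper implements the ergodicity step by testing $\SH$ against the countable family $\delta_n(\nu)=d(h_n,\SH(\nu))$ (a concrete way of saying, as you do, that the hyperspace $\KW\cup\{\emptyset\}$ is countably separated), and it proves b) exactly by producing, in a common $\piG_*$-fibre, one configuration from each of the two full-measure level sets of $\SH$ and applying Lemma~\ref{lemma:shifted-versions} -- which is your disintegration argument. Where you genuinely diverge is the Haar regularity of $\WP$. The paper argues by contradiction: if $m_H(B_r(h)\cap \WP)=0$ for some $h\in\WP$, then $P$-a.e.\ $\nu$ has a lattice point projecting into $B_r(h)\cap\WP$, and a union bound over the countable lattice $\LL$, using $P\circ\hat\pi^{-1}=m_\hX$ and the fundamental domain $X$, shows this event is $P$-null. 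Your argument packages the same inputs ($\hat\pi_*P=m_\hX$ together with $\nu\le\nuW(\hat\pi(\nu))$) into the single inequality $\mu_P\le\mu_{\QM}=\dL\cdot m_G\otimes m_H|_W$ for the intensity measure, and then reads off regularity from $\WP=\supp(\rho_P)$ with $\rho_P\ll m_H$; this is more conceptual and reusable, at the modest extra cost of the product decomposition $\mu_P=m_G\otimes\rho_P$ and its local finiteness, both of which you justify correctly via translation boundedness and uniqueness of Haar measure. Two cosmetic points: the phrase ``the support of any density has the form $\{g>0\}_{reg}$'' is imprecise, since $\{g>0\}$ need not be compact and the paper defines regularization only for compact sets -- what you actually use, and what is true, is that $g=0$ holds $m_H$-a.e.\ off $\supp(\rho_P)=\WP$, so $\rho_P(U)>0$ forces $m_H(U\cap\WP)>0$; and in b) the degenerate case $\PG=\delta_{\0}$ (where both fibres reduce to $\{\0\}$ and $W_P=W_{P'}=\emptyset$) deserves one sentence.
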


\begin{proof}
a)
The claim is obvious for $P$ satisfying $P(\{\0\})=1$, which is equivalent to $W_P=\emptyset$. Hence we assume without loss of generality that $P(\MW\setminus\{\0\})=1$.

We need the following preparation. Fix a complete metric $d$ on $H$ that generates the topology, and also  a countable dense subset $\{h_n: n\in\N\}$ of $H$. Define functions $\delta_n:\MW\setminus\{\0\}\to\R$, $\delta_n(\nu):=d(h_n,\SH(\nu))$. These functions inherit their Borel-measurability from $\SH$ \cite[Thm.~III.2 and III.9]{CV1977}. Note also that if $(\delta_n(\nu))_{n\in\N}=(\delta_n(\nu'))_{n\in\N}$, then $\SH(\nu)=\SH(\nu')$. Indeed, otherwise there is $h\in\SH(\nu)\setminus\SH(\nu')$ (w.l.o.g.), and for a subsequence $(h_{n_k})_k$ converging to $h$ one has $\lim_{k\to\infty}\delta_{n_k}(\nu)=d(h,\SH(\nu))=0<d(h,\SH(\nu'))=\lim_{k\to\infty}\delta_{n_k}(\nu')$, a contradiction.

Now the $S$-invariance of $\SH$ implies at once that all $\delta_n$ are $S$-invariant. As $P$ is ergodic, there are constants $(a_n)_{n\in\N}$ 
and a set $\cM'\subseteq\MW\setminus\{\0\}$ of full $P$-measure such that $\delta_n(\nu)=a_n$ for all $\nu\in\cM'$ and all $n\in\N$. Hence $\SH(\nu)$ is the same compact subset of $H$, call it $\WP$, for all $\nu\in\cM'$. We have $W_P\neq\emptyset$, of course.

It remains to prove that $\WP$ is Haar regular. Suppose for a contradiction that this is not the case. Then there are $h\in \WP$ and $r>0$ such that $m_H\left(B_r(h)\cap \WP\right)=0$, where $B_r(h)=\{h'\in H:d(h',h)<r\}$. In view of (\ref{eq:SH(nu)}), $\piH\left(\supp(\nu)\right)\cap B_r(h)\neq\emptyset$ for each $\nu\in\cM'$. 
Using the torus map $\hat\pi: \MW\setminus\{\0\}\to\hX$, which was explained after Proposition~\ref{prop:MWfactor}, we infer $\piH\left(\supp(\nuW(\hat\pi(\nu)))\right)\cap B_r(h)\neq\emptyset$. Denote by $\pi(\nu)$ the unique representative of $\hat\pi(\nu)$ in the fundamental domain $X\subseteq G\times H$ of $\hX$. 
It follows that
\begin{equation*}
\cM'
\subseteq
\bigcup_{\ell\in\LL}
\left\{\nu\in\MW: \piH(\ell+\pi(\nu))\in B_r(h)\cap \WP\right\}
=
\bigcup_{\ell\in\LL}
\pi^{-1}\left(X\cap\bigg(\big(G\times(B_r(h)\cap \WP)\big)-\ell\bigg)\right).
\end{equation*}
In the remaining part of the proof we will show that $P(\cM')=0$, which is the desired contradiction. To that end recall that $\LL$ is countable and that $P\circ\hat\pi^{-1}=m_\hX$, compare the proof of Proposition~\ref{prop:factor}.
Hence $P\circ\pi^{-1}=\dL\cdot m_{G\times H}|_X$, compare Fact~\ref{en:ass}(2).
Therefore it suffices to estimate
\begin{equation*}
\begin{split}
m_{G\times H}\left(X\cap\bigg(\big(G\times(B_r(h)\cap \WP)\big)-\ell\bigg)\right)
&\leqslant
m_{G\times H}\bigg(\big(G\times(B_r(h)\cap \WP)\big)-\ell\bigg)\\
&=
m_{G\times H}\big(G\times(B_r(h)\cap \WP)\big)\ ,
\end{split}
\end{equation*}
and to observe that the latter expression evaluates to $0$, because $m_H(B_r(h)\cap \WP)=0$.
\\
b) Assume now that $P\circ(\piG_*)^{-1}=P'\circ(\piG_*)^{-1}$. In view of part a) of the lemma and of Lemma~\ref{lemma:shifted-versions}, there are $\nu,\nu'\in\MW$ 
and $d\in H$
such that 
$W_P=\SH(\nu)=d+\SH(\nu')=d+W_{P'}$.
\end{proof}

\begin{lemma}\label{lem:piG11}
Suppose that $W$ is aperiodic. Then $\piG_*:\MW\to\MWG$ is \oneone at $\nu\in\MW$ whenever $\SH(\nu)=W$.
\end{lemma}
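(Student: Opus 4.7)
The plan is to apply Lemma~\ref{lemma:shifted-versions} to produce a translation parameter $d \in H$ relating $\nu$ and $\nu'$, and then use the hypothesis $\SH(\nu) = W$ together with compactness of $W$ and aperiodicity to force $d = 0$. Concretely, if $\nu, \nu' \in \MW$ satisfy $\piG_*\nu = \piG_*\nu'$, then Lemma~\ref{lemma:shifted-versions} yields $\nu' = \sigma_d \nu$ with $d + \SH(\nu') = \SH(\nu) = W$, so $\SH(\nu') = W - d$. Since $\nu' \in \MW$, its $\SH$ is contained in $W$ by equation~\eqref{eq:SH(nu)}, yielding the one-sided inclusion $W - d \subseteq W$.

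The task is therefore to upgrade this one-sided invariance to a full translation invariance $W - d = W$, because aperiodicity of $W$ would then immediately give $d = 0$. The key observation is that iterating gives $W - nd \subseteq W$ for every $n \geq 0$. Fixing any $w_0 \in W$ one has $w_0 - nd \in W$, so the orbit $\{-nd : n \geq 0\} = \{(w_0 - nd) - w_0 : n \geq 0\}$ lies inside the difference set $W - W$, which is compact as the continuous image of $W \times W$ under subtraction. Consequently $S := \overline{\{-nd : n \geq 0\}}$ is a compact subset of $H$; being closed under addition and containing $0$, it is a compact abelian subsemigroup of $H$.

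The main technical step will be to argue that $S$ is in fact a subgroup of $H$, for which I plan to invoke the standard fact that a compact subsemigroup of a topological group containing the identity is closed under inverses. The argument is short: for any $s \in S$, compactness produces a convergent subsequence $n_k s \to s^*$; passing to a further subsequence so that $m_k := n_{k+1} - n_k \to \infty$ gives $m_k s \to 0$, whence $(m_k - 1)s \to -s$, and since $(m_k - 1)s \in S$ for $m_k \geq 1$, closedness of $S$ yields $-s \in S$. Once $S$ is known to be a compact subgroup containing $-d$ (and hence $d$), passing to the limit in $W + (-nd) \subseteq W$ via continuity of translation shows $W + S \subseteq W$, and applying this to both $s$ and $-s$ upgrades the inclusion to equality $W + s = W$ for every $s \in S$. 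Hence $S \subseteq H_W = \{0\}$, forcing $d = 0$ and consequently $\nu' = \sigma_0\nu = \nu$.
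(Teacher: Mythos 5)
Your proof is correct. It opens exactly as the paper's does: apply Lemma~\ref{lemma:shifted-versions} to get $\nu'=\sigma_d\nu$ with $\SH(\nu')=W-d\subseteq W$, and then the whole content is to upgrade the one-sided inclusion $W-d\subseteq W$ to the equality $W-d=W$ using compactness, after which aperiodicity forces $d=0$. Where you diverge is in how that upgrade is carried out. The paper argues by contradiction with a metric pigeonhole: if some $h_0\in W\setminus(W-d)$ existed, the points $h_n=h_0-nd$ would all lie in $W$ yet be pairwise separated by the fixed positive distance $d(h_0,W-d)$ (using translation invariance of the metric), contradicting compactness of $W$. You instead pass to the closure $S$ of the semigroup $\{-nd:n\geq 0\}$, note it is compact because it sits inside $W-W$, invoke (and correctly prove) the standard fact that a compact subsemigroup of a topological group containing the identity is a group, and then use $W+S\subseteq W$ together with $-s\in S$ to conclude $W+s=W$ for all $s\in S$, hence $S\subseteq H_W=\{0\}$. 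Both arguments are sound; the paper's is shorter and entirely elementary, relying only on the translation-invariant metric, while yours is more structural and isolates a reusable principle (the group generated topologically by a one-sided period of a compact set consists of genuine periods), which would generalise without change to any element $d$ with $W-d\subseteq W$, not just one arising from Lemma~\ref{lemma:shifted-versions}.
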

\begin{proof}
If $\SH(\nu)=W$ and $\piG_*\nu=\piG_*\nu'$ for $\nu,\nu'\in\MW$,
then $\nu'=\sigma_d\nu$ and $W-d=\SH(\nu)-d=\SH(\nu')\subseteq W$ by Lemma~\ref{lemma:shifted-versions}. 
Let $W_n=W-n\cdot d$. Then $W_0\supseteq W_1\supseteq\dots$ is a nested sequence of compact sets. Suppose for a contradiction that there exists some $h_0\in W_0\setminus W_1$ and let $h_n:=h_0-n\cdot d\in W_n\subseteq W$ for $n\in\N$. Then, for $k>n$, $d(h_n,h_k)\geqslant d(h_n,W_k)\geqslant d(h_n,W_{n+1})=d(h_0,W_1)>0$, which is impossible, because $W$ is compact. Hence $W=W_0=W_1=W-d$, so that $d=0$ because $W$ is aperiodic. 
\end{proof}

\begin{lemma}\label{lemma:Borel-measurable-strong}
Suppose that $F\subseteq W$ is Haar regular and $K\subseteq\MW$ is closed. Then $\piG_*\{\nu\in K: \SH(\nu)=F\}$ is a Borel subset of $\MWG$.
\end{lemma}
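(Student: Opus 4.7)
The plan is to exhibit the set in question as the projection of a Borel set with compact vertical fibres and then to invoke the Arsenin--Kunugui theorem on projections with $\sigma$-compact sections. Write $B := \{\nu \in K : \SH(\nu) = F\}$. The case $F = \emptyset$ forces $B \subseteq \{\0\}$ and the conclusion is trivial, so I may assume $F \neq \emptyset$, whence $\0 \notin B$. Since $K$ is closed in $\MW$ and $\SH$ is Borel measurable by Lemma~\ref{lem:lsc}, $B$ is a Borel subset of $\MW$.

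Next, I will define $\Psi : B \to \hX \times \MWG$ by $\Psi(\nu) := (\hat\pi(\nu),\, \piG_*\nu)$. This is continuous on $B$ because $\hat\pi$ is continuous on $\MW \setminus \{\0\}$. I claim $\Psi$ is injective: if $\Psi(\nu) = \Psi(\nu')$, then Lemma~\ref{lemma:shifted-versions} applied to $\piG_*\nu = \piG_*\nu'$ yields $\nu' = \sigma_d \nu$ for some $d \in H$; since $\sigma_d$ translates the support by $(0,d)$, one computes $\hat\pi(\sigma_d \nu) = \hat\pi(\nu) + \pihX(0,d)$, and the equality $\hat\pi(\nu) = \hat\pi(\nu')$ then forces $\pihX(0,d) = 0$, i.e. $(0,d) \in \LL$. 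Because $\piG|_\LL$ is \oneone and $\piG(0,d) = 0 = \piG(0,0)$, this gives $d = 0$, hence $\nu = \nu'$. The Lusin--Souslin theorem now ensures that $\Psi(B)$ is a Borel subset of $\hX \times \MWG$.

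Let $\pi_2 : \hX \times \MWG \to \MWG$ denote the second projection, so that $\piG_* B = \pi_2(\Psi(B))$. I analyse the fibres of $\pi_2|_{\Psi(B)}$. Fix $\nuG \in \piG_* B$ and a representative $\nu_0 \in B$ with $\piG_*\nu_0 = \nuG$. Every $\nu \in B$ with $\piG_* \nu = \nuG$ is of the form $\sigma_d \nu_0$ for some $d \in H$ by Lemma~\ref{lemma:shifted-versions}, and the constraint $\SH(\nu) = F = \SH(\nu_0)$, combined with the $\SH$-shift identity of that lemma, forces $d$ to lie in the period group $H_F := \{h \in H : h + F = F\}$. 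Compactness of $F$ (which is part of the definition of Haar regularity) implies $H_F \subseteq F - F$ is closed and bounded, hence a compact subgroup of $H$. Since $K$ is closed and $d \mapsto \sigma_d \nu_0$ is continuous, the set $\{d \in H_F : \sigma_d \nu_0 \in K\}$ is compact as a closed subset of $H_F$. Consequently
\[
\pi_2^{-1}(\nuG) \cap \Psi(B) = \{(\hat\pi(\nu_0) + \pihX(0,d),\,\nuG) : d \in H_F,\ \sigma_d \nu_0 \in K\}
\]
is compact, in particular $\sigma$-compact. The Arsenin--Kunugui theorem on projections of Borel sets with $\sigma$-compact vertical sections in a product of Polish spaces then yields that $\pi_2(\Psi(B)) = \piG_* \{\nu \in K : \SH(\nu) = F\}$ is Borel in $\MWG$.

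The main obstacle is that $\piG_*$ is typically not injective on $B$ when $F$ has non-trivial periods, so Lusin--Souslin applied to $\piG_*$ alone does not suffice. The remedy is to enlarge the target by the torus coordinate $\hat\pi$, which restores injectivity via the \oneone property of $\piG|_\LL$; the price to pay is the final $\hX$-projection, whose fibres are controlled by the compact period group $H_F$. Haar regularity enters precisely through the compactness of $F$, which forces $H_F$ to be compact and thereby makes the Arsenin--Kunugui step applicable.
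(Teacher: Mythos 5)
Your proof is correct, but it follows a genuinely different route from the paper's. The paper's argument is elementary: it picks basic open sets $V_n$ with $F_n:=F\setminus V_n\subsetneq F$, so that every compact proper subset of $F$ lies in some $F_n$, and then uses Haar regularity to show that
\[
\piG_*\{\nu\in K:\SH(\nu)\subseteq F\}
=\piG_*\{\nu\in K:\SH(\nu)=F\}\;\cup\;\bigcup_{n}\piG_*\{\nu\in K:\SH(\nu)\subseteq F_n\}
\]
is a \emph{disjoint} union: if $\piG_*\nu'=\piG_*\nu$ with $\SH(\nu)=F$ and $\SH(\nu')\subseteq F$, then $\SH(\nu')$ is a translate of $F$ by Lemma~\ref{lemma:shifted-versions}, hence has full Haar measure in $F$, hence equals $F$ because $F$ is Haar regular. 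Since the cores $\{\nu\in K:\SH(\nu)\subseteq F\}$ and $\{\nu\in K:\SH(\nu)\subseteq F_n\}$ are compact by Lemma~\ref{lem:lsc}, the set in question is the difference of a compact set and an $F_\sigma$ set. You instead control the fibres of $\piG_*$ over $B$ --- each is parametrised by a closed subset of the compact period group $H_F$ --- and invoke the Arsenin--Kunugui projection theorem. This is a valid alternative, and it in fact yields a stronger statement: Haar regularity never enters your argument except through compactness and closedness of $F$, so your proof gives the conclusion for arbitrary compact $F\subseteq W$; the price is the appeal to nontrivial descriptive set theory, whereas the paper's proof stays elementary and locates the set at a very low level of the Borel hierarchy. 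Two minor remarks: the detour through $\hat\pi$ and Lusin--Souslin is avoidable, since the set $\{(\piG_*\nu,\nu):\nu\in B\}\subseteq\MWG\times\MW$ is Borel already as the intersection of $\MWG\times B$ with the closed graph of $\piG_*$, and its vertical sections are precisely your compact fibres; and the sign convention in the relation $d+\SH(\nu')=\SH(\nu)$ of Lemma~\ref{lemma:shifted-versions} is immaterial for your purposes because $H_F$ is a group.
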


\begin{proof}
The case $F=\emptyset$ is trivial. So we may assume that $F\neq\emptyset$. 
Denote by $V_1,V_2,\ldots\subseteq H$ those elements of a base of the second countable space $H$, for which $F_n:= F\setminus V_n$ is a proper subset of $F$.
Then any compact proper subset $F'$ of $F$ is contained in some  $F_n$.
We thus can write
\begin{equation}\label{eq:G-new-1}
\piG_*\{\nu\in K:\SH(\nu)\subseteq F\}
=
\piG_*\{\nu\in K:\SH(\nu)=F\}
\cup
\left(\bigcup_{n=1}^\infty\piG_*\{\nu\in K:\SH(\nu)\subseteq F_n\}\right) .
\end{equation}
Next,
let $\nu,\nu'\in\MW$ with $\SH(\nu)=F\neq\emptyset$, $\SH(\nu')\subseteq F$ and
$\piG_*(\nu')=\piG_*(\nu)$. In particular, $\nu,\nu'\neq\0$.
Then $m_H(\SH(\nu'))=m_H(\SH(\nu))=m_H(F)$ by Lemma~\ref{lemma:shifted-versions}. 
As $F$ is Haar regular, this implies $\SH(\nu')=F$. Therefore the union of the two sets on the rhs of Eqn.~\eqref{eq:G-new-1} is disjoint, and we have
\begin{equation}\label{eq:G-new-3}
\piG_*\{\nu\in K:\SH(\nu)=F\}
=
\piG_*\{\nu\in K:\SH(\nu)\subseteq F\}
\setminus\bigcup_{n=1}^\infty\piG_*\{\nu\in K:\SH(\nu)\subseteq F_n\} \ .
\end{equation}
As all sets involved in the rhs of \eqref{eq:G-new-3} are continuous images of sets which are compact by Lemma~\ref{lem:lsc}, the lhs of \eqref{eq:G-new-3} is in particular Borel measurable.
\end{proof}

\section{Proofs of Theorems~\ref{theo:main-2} and~\ref{theo:main}}
\label{sec:proof-2-3}

We first prove both theorems for compact windows, and discuss the extension
of Theorem~\ref{theo:main-2} to windows which are compact modulo $0$ in Subsection~\ref{subsec:mtr}.

Fix any tempered van Hove sequence $(A_n)_{n\in\mathbb N}$ of subsets of $G$, compare \cite[Footnote~5]{KR2015}. We always have the upper bound
\begin{equation*}
\overline{d}(\nu):=\limsup_{n\to\infty}\frac{\nu(A_n\times H)}{m_G(A_n)}\le \dL\cdot m_H(W)
\end{equation*}
on the upper density of any configuration $\nu\in \MW$, see \cite[Eqn.~(14)]{KR2015}. We say that $\nu\in\MW$ has \emph{maximal density} if
\begin{equation*}
d(\nu):=\lim_{n\to\infty}\frac{\nu(A_n\times H)}{m_G(A_n)}\text{ exists, and if }d(\nu)=\dL\cdot m_H(W)\,.
\end{equation*}
Recall from \cite[Thm.~5a]{Moody2002} that 
\begin{equation}\label{eq:Moody}
\QM\left\{\nu\in\MW: \text{$\nu$ has maximal density}\right\}=1\,,
\end{equation}
where $\QM=m_\hX\circ(\nuW)^{-1}$ denotes the Mirsky measure on $\MW$.
\begin{lemma}\label{lemma:SHnu=W}
$W_{reg}\subseteq\SH(\nu)\subseteq W$ for
each
$\nu\in\MW$ with maximal density.
\end{lemma}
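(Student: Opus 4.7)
The inclusion $\SH(\nu)\subseteq W$ is immediate from Eqn.~\eqref{eq:SH(nu)} (and was already noted in Remark~\ref{rem:SHcont}b) for arbitrary $\nu\in\MW$, not only those of maximal density). So the whole content of the lemma is the reverse inclusion $W_{reg}\subseteq\SH(\nu)$, and my plan is to prove this by contradiction using a volume comparison.

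Suppose there exists $h\in W_{reg}\setminus\SH(\nu)$. Since $\SH(\nu)$ is closed, I can choose an open neighborhood $U$ of $h$ with $U\cap\SH(\nu)=\emptyset$. Because $h\in W_{reg}=\supp((m_H)|_W)$, this neighborhood satisfies $m_H(W\cap U)>0$, which is the quantitative leverage I need. Note that $\nu$ has maximal density, in particular $\nu\neq\0$, so by the torus parametrisation recalled after Proposition~\ref{prop:MWfactor} there is $\hx=\hat\pi(\nu)\in\hX$ with $\supp(\nu)\subseteq\supp(\nuW(\hx))$, hence $\nu\le\nuW(\hx)$ as (locally finite sums of simple Dirac masses on) measures.

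From $U\cap\SH(\nu)=\emptyset$ together with \eqref{eq:SH(nu)} one gets $\supp(\nu)\cap(G\times U)=\emptyset$, and therefore $\nu$ is also bounded above by the subconfiguration $\nu_{W\setminus U}(\hx)$, obtained by restricting the lattice points in $\nuW(\hx)$ to those whose $H$-coordinate lies in the compact set $W\setminus U$. The compact set $W\setminus U$ has $m_H$-measure $m_H(W)-m_H(W\cap U)<m_H(W)$. Applying the standard upper density bound for projected lattice counts in a strip over a compact window along the tempered van~Hove sequence $(A_n)$ (the same bound that yields \cite[Eqn.~(14)]{KR2015}), we obtain
\begin{equation*}
d(\nu)\le\overline{d}(\nu_{W\setminus U}(\hx))\le\dL\cdot m_H(W\setminus U)<\dL\cdot m_H(W),
\end{equation*}
contradicting the assumption that $\nu$ has maximal density. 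Hence no such $h$ exists, and $W_{reg}\subseteq\SH(\nu)$.

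The only step that requires any care is invoking the upper density bound on $\nu_{W\setminus U}(\hx)$ rather than on $\nuW(\hx)$; this is not quite the statement in \cite[Eqn.~(14)]{KR2015}, but since $W\setminus U$ is compact the same van Hove/lattice counting argument applies verbatim, so I do not anticipate any real obstacle here. The conceptual heart of the argument is simply that maximal density of $\nu$ forces $\nu$ to ``see'' all of $W$ up to Haar-negligible subsets, and $W_{reg}$ captures exactly those points that cannot be removed from $W$ without losing a positive amount of Haar mass.
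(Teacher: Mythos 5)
Your proof is correct and follows essentially the same route as the paper's: both rest on the domination $\nu\le\nu_{W'}(\hx)$ by a configuration with a compact sub-window $W'\subseteq W$, combined with the upper-density estimate $\overline{d}(\nu_{W'}(\hx))\le\dL\cdot m_H(W')$ along the tempered van Hove sequence. The paper takes $W'=\SH(\nu)$ and concludes $m_H(\SH(\nu))=m_H(W)$, hence $W_{reg}\subseteq\SH(\nu)$, whereas you take $W'=W\setminus U$ for a neighbourhood $U$ of a hypothetical point of $W_{reg}\setminus\SH(\nu)$ --- a localized unpacking of the same volume comparison.
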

\begin{proof}
We have $\SH(\nu)\subseteq W$ for all $\nu\in\MW$ by definition. 
If $m_H(W)=0$, then $W_{reg}=\emptyset\subseteq\SH(\nu)$.
Assume now that $m_H(W)>0$ and that $\nu$ has maximal density $d(\nu)=\dL\cdot m_H(W)>0$. In particular, $\nu\in\MW\setminus\{\0\}$. There is a unique $\hx=x+\LL\in\hX$ such that $\nu\leqslant\nuW(\hx)$ by \cite[Lem.~5.4]{KR2015}.
We thus get $\supp(\nu) \subseteq (x+\LL) \cap (G\times \SH(\nu))$,
which implies $\nu\leqslant\nu_{\scriptstyle \SH(\nu)}(\hx)$. Hence
\begin{equation*}
\dL\cdot m_H(W)
=d(\nu)
\leqslant \overline{d}(\nu_{\scriptstyle \SH(\nu)}(\hx))
\leqslant
\dL\cdot m_H(\SH(\nu))\ ,
\end{equation*}
which yields $m_H(W)= m_H(\SH(\nu))$.
As $\SH(\nu)$ is a compact subset of $W$, this implies
$W_{reg}\subseteq \SH(\nu)$.
\end{proof}
\begin{corollary}\label{coro:WQM=Wreg}
$W_{\QM}=W_{reg}$.
\end{corollary}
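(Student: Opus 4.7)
The plan is to prove the two inclusions $W_{reg}\subseteq W_{\QM}$ and $W_{\QM}\subseteq W_{reg}$ separately. The first follows from combining the maximal-density result with Lemma~\ref{lemma:SHnu=W}; the second uses the Haar regularity of $W_{\QM}$ in an essential pointwise (not just measure-theoretic) way.

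First I would verify that Lemma~\ref{lemma:SH-inv-ergod}(a) applies with $P=\QM$, which amounts to checking that $\QM$ is $S$-ergodic. By Proposition~\ref{prop:MWfactor} the system $(\MW,\QM,S)$ is a factor of the uniquely ergodic system $(\hX,m_\hX,\hT)$ (see item~\ref{item2.2:4} of Subsection~\ref{en:ass}), so $\QM$ is ergodic as a factor of an ergodic measure. Lemma~\ref{lemma:SH-inv-ergod}(a) then produces the Haar regular set $W_{\QM}\subseteq W$ on which $\SH$ is $\QM$-almost surely constantly equal to $W_{\QM}$. Combining Eqn.~\eqref{eq:Moody} with Lemma~\ref{lemma:SHnu=W}, $\QM$-almost every $\nu$ satisfies simultaneously $\SH(\nu)=W_{\QM}$ and $W_{reg}\subseteq\SH(\nu)\subseteq W$; picking any such $\nu$ yields $W_{reg}\subseteq W_{\QM}$.

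For the reverse inclusion I would argue by contradiction. From $W_{reg}\subseteq W_{\QM}\subseteq W$ together with $m_H(W\setminus W_{reg})=0$ (by definition of $W_{reg}$) I get $m_H(W_{\QM}\setminus W_{reg})=0$. Suppose that some $h\in W_{\QM}\setminus W_{reg}$ exists; since $W_{reg}$ is closed in $H$, there is an open neighborhood $U$ of $h$ with $U\cap W_{reg}=\emptyset$, so $U\cap W_{\QM}$ is a nonempty open-in-$H$ intersection with $W_{\QM}$. The Haar regularity of $W_{\QM}$, in the equivalent form of Remark~\ref{rem:hr}(a), then forces $m_H(U\cap W_{\QM})>0$. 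But $U\cap W_{\QM}\subseteq W_{\QM}\setminus W_{reg}$, contradicting $m_H(W_{\QM}\setminus W_{reg})=0$. Hence $W_{\QM}\subseteq W_{reg}$.

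I do not anticipate any substantive obstacle: ergodicity of $\QM$, concentration of $\QM$ on maximal-density configurations (Eqn.~\eqref{eq:Moody}), the sandwich $W_{reg}\subseteq\SH(\nu)\subseteq W$ for such $\nu$ (Lemma~\ref{lemma:SHnu=W}), and the pointwise characterisation of Haar regularity (Remark~\ref{rem:hr}(a)) are all in place. The only mild subtlety is resisting the temptation to conclude the equality from $m_H(W_{\QM}\triangle W_{reg})=0$ alone; Haar regularity of $W_{\QM}$ must be invoked to upgrade that nullity to set-theoretic equality.
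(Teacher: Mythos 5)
Your proposal is correct and follows essentially the same route as the paper: the inclusion $W_{reg}\subseteq W_{\QM}$ via Eqn.~\eqref{eq:Moody}, Lemma~\ref{lemma:SHnu=W} and Lemma~\ref{lemma:SH-inv-ergod}, and the reverse inclusion from Haar regularity of $W_{\QM}$ combined with $m_H(W\setminus W_{reg})=0$ (the paper phrases this directly as $w\in\supp((m_H)|_W)$ for each $w\in W_{\QM}$ rather than by contradiction, but the content is identical). Your explicit check that $\QM$ is ergodic, being a factor of the uniquely ergodic $(\hX,m_\hX,\hT)$, is a detail the paper leaves implicit.
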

\begin{proof}
Observe that $W_{\QM}=\SH(\nu)\subseteq W$ for $\QM$-a.a.~$\nu$ by Lemma~\ref{lemma:SH-inv-ergod}. Hence 
$W_{reg}\subseteq W_{\QM}$ in view of \eqref{eq:Moody} and Lemma~\ref{lemma:SHnu=W}.  Haar regularity of $W_{\QM}$, which holds by definition,  implies $W_{reg} \supseteq W_{\QM}$. Indeed, let $w\in W_{\QM}$ and let $U$ any neighborhood of $w$. Then $m_H(U\cap W)=m_H(U\cap W_{\QM})>0$, because $m_H(W\setminus W_{\QM})\leqslant m_H(W\setminus W_{reg})=0$ and as $W_{\QM}$ is Haar regular.
\end{proof}

\begin{corollary}\label{coro:max-density}
Denote by $\MW'\subseteq \MW$ the set of configurations of maximal density.  If $W$ is Haar aperiodic, then $\piG_*|_{\MW'}:\MW'\to\MWG$ is \oneone.
\end{corollary}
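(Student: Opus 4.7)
The plan is to reduce the statement to an application of Haar-aperiodicity of $W$ (equivalently, topological aperiodicity of $W_{reg}$, by Remark~\ref{remark:Haar-and-other-periods}) at the level of the compact set $W_{reg}$. Assume $\nu,\nu'\in\MW'$ with $\piG_*\nu=\piG_*\nu'$. Since $\nu$ has maximal density, $\nu\neq\0$ and similarly $\nu'\neq\0$, so Lemma~\ref{lemma:shifted-versions} produces $d\in H$ with $\nu'=\sigma_d\nu$ and $\SH(\nu)=d+\SH(\nu')$. The task reduces to showing $d=0$.

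The key step is to promote the translation identity $\SH(\nu)=d+\SH(\nu')$ to a genuine translation identity on $W_{reg}$. First I would invoke Lemma~\ref{lemma:SHnu=W} to sandwich both $\SH(\nu)$ and $\SH(\nu')$ between $W_{reg}$ and $W$. Hence $d+W_{reg}\subseteq d+\SH(\nu')=\SH(\nu)\subseteq W$. Now $d+W_{reg}$ is Haar regular (a translate of a Haar regular set is Haar regular, cf.~Remark~\ref{rem:hr}d)), so by monotonicity of Haar regularization applied to the inclusion $d+W_{reg}\subseteq W$, we obtain $d+W_{reg}=(d+W_{reg})_{reg}\subseteq W_{reg}$.

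It remains to upgrade $d+W_{reg}\subseteq W_{reg}$ to equality, and then apply aperiodicity. This is the same compactness/nested-sequence argument as in Lemma~\ref{lem:piG11}: iterating yields a decreasing chain $W_{reg}\supseteq W_{reg}+d\supseteq W_{reg}+2d\supseteq\ldots$ of compact sets; if the chain were strictly decreasing at the first step, picking $h_0\in W_{reg}\setminus(W_{reg}+d)$ and setting $h_n=h_0+nd$ would produce a sequence in the compact set $W_{reg}$ whose pairwise distances are bounded below by $\operatorname{dist}(h_0,W_{reg}+d)>0$, contradicting compactness. Hence $d+W_{reg}=W_{reg}$, i.e.\ $d\in H_{W_{reg}}$, and Haar aperiodicity of $W$ forces $d=0$, so $\nu'=\sigma_0\nu=\nu$.

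The main technical point to get right is the passage from $d+W_{reg}\subseteq W$ to $d+W_{reg}\subseteq W_{reg}$: without the Haar regularity of the translate one only obtains containment modulo a Haar null set, which is too weak to feed into the strict compactness argument. Everything else is routine bookkeeping, and the rest of the proof is essentially a templated reuse of the nested-compact-sets trick of Lemma~\ref{lem:piG11}.
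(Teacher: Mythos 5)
Your proof is correct, and its skeleton coincides with the paper's: both reduce to Lemma~\ref{lemma:shifted-versions} (giving $\nu'=\sigma_d\nu$ and $d+\SH(\nu')=\SH(\nu)$) combined with the sandwich $W_{reg}\subseteq\SH(\nu),\SH(\nu')\subseteq W$ from Lemma~\ref{lemma:SHnu=W}. Where you diverge is the final step. The paper simply observes that, since $m_H(W\setminus W_{reg})=0$, both $\SH(\nu)$ and $\SH(\nu')$ agree with $W$ up to $m_H$-null sets, so $0=m_H\bigl((d+\SH(\nu'))\triangle\SH(\nu)\bigr)=m_H\bigl((d+W)\triangle W\bigr)$, and Haar aperiodicity of $W$ gives $d=0$ in one line. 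You instead upgrade to the exact identity $d+W_{reg}=W_{reg}$ via monotonicity of Haar regularization and the nested-compact-sets trick of Lemma~\ref{lem:piG11}, and then invoke $H^{Haar}_W=H_{W_{reg}}$. This is valid (and your monotonicity step $(d+W_{reg})_{reg}\subseteq W_{reg}$ is correctly justified), but the concluding remark of your write-up is based on a misreading of what is needed: containment ``modulo a Haar null set'' is not too weak here, because Haar aperiodicity is by definition a statement about $m_H\bigl((h+W)\triangle W\bigr)=0$, so the measure-theoretic comparison suffices and the compactness/regularization machinery is unnecessary. Your route does yield the slightly stronger exact conclusion $d\in H_{W_{reg}}$ before translating back to Haar periods, but it buys nothing for this corollary; the paper's symmetric-difference computation is the shorter and more natural argument.
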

\begin{proof}
Note that $W_{reg}\subseteq \SH(\nu)\subseteq W$ for any configuration $\nu\in\MW'$ by Lemma~\ref{lemma:SHnu=W}, where $m_H(W\setminus W_{reg})=0$. Now assume that $\piG_*(\nu)=\piG_*(\nu')$ for $\nu,\nu'\in \MW'$. Then $\nu'=\sigma_d \nu$ and $d+\SH(\nu')=\SH(\nu)$ for some $d\in H$ by Lemma~\ref{lemma:shifted-versions}, which implies
\begin{displaymath}
0=m_H((d+\SH(\nu'))\triangle \SH(\nu))=m_H((d+W)\triangle W).
\end{displaymath}
As $W$ is Haar aperiodic, we conclude $d=0$ and thus $\nu=\nu'$.
\end{proof}

\begin{proof}[\bf \em Proof of Theorem~\ref{theo:main-2}]  
$\piG_*$ is \oneone at $\QM$-a.a. $\nu\in\MW$ by Corollary~\ref{coro:max-density} and Eqn.~\eqref{eq:Moody}.
Hence $\piG_*:(\MW,\QM,S)\to(\MWG,\QMG,S)$ is a measure-theoretic isomorphism, and thus $\nuWG:(\hX,m_\hX, \hT)\to(\MWG,\QMG,S)$ is a measure-theoretic isomorphism by Proposition~\ref{prop:mirsky}. Here we use $m_H(W)>0$, which follows from Haar aperiodicity of $W$.
\end{proof}
We now turn to general $S$-invariant probability measures on $\MW$.
\begin{corollary}\label{coro:ergodic-oneone}
Fix an ergodic $S$-invariant probability measure $P$ on $\MW$ and consider the Haar regular set $\WP\subseteq W$ from Lemma~\ref{lemma:SH-inv-ergod}.
Then $\cM_P:=\SH^{-1}\{\WP\}\subseteq \MW$ has $P$-measure one.  If $\WP$ is aperiodic, then $\piG_*|_{\cM_P}$ is \oneone.
\end{corollary}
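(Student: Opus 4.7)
The plan is to read the two assertions directly off the preceding structural lemmas, so that essentially no new argument is needed beyond assembling them correctly.

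For the first assertion, I would simply invoke Lemma~\ref{lemma:SH-inv-ergod}a), whose content is exactly that for the ergodic measure $P$ there exists a (Haar regular) compact set $\WP\subseteq W$ with $\SH(\nu)=\WP$ for $P$-almost every $\nu$. By definition of $\cM_P=\SH^{-1}\{\WP\}$, this says $P(\cM_P)=1$.

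For the second assertion, assume $\WP$ is aperiodic; in particular $\WP\neq\emptyset$, so every $\nu\in\cM_P$ is nonzero (since $\SH(\underline{0})=\emptyset$). Suppose $\nu,\nu'\in\cM_P$ satisfy $\piG_*\nu=\piG_*\nu'$. Both lie in $\widetilde{\MW}\setminus\{\underline{0}\}$ (because $\MW\subseteq\widetilde{\MW}$), so Lemma~\ref{lemma:shifted-versions} applies and yields some $d\in H$ with $\nu'=\sigma_d\nu$ and
\begin{equation*}
d+\SH(\nu')=\SH(\nu).
\end{equation*}
But membership in $\cM_P$ forces $\SH(\nu)=\SH(\nu')=\WP$, so $d+\WP=\WP$, i.e.\ $d\in H_{\WP}$. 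Aperiodicity of $\WP$ then gives $d=0$, whence $\nu'=\sigma_0\nu=\nu$. Thus $\piG_*|_{\cM_P}$ is injective.

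There is no real obstacle here: the work is encapsulated in the earlier results. The only point requiring minor care is ensuring that when Lemma~\ref{lemma:shifted-versions} is invoked, the configurations are genuinely nonzero, which is precisely what aperiodicity of $\WP$ (hence $\WP\neq\emptyset$) guarantees. If one wished, one could even state (but not use) that in the degenerate case $P(\{\underline 0\})=1$ one has $\WP=\emptyset$, $\cM_P=\{\underline{0}\}$, and the injectivity statement is vacuously true, but the hypothesis of aperiodicity rules this case out.
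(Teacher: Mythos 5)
Your proof is correct and follows essentially the same route as the paper: Lemma~\ref{lemma:SH-inv-ergod}a) gives the full-measure claim, and Lemma~\ref{lemma:shifted-versions} combined with aperiodicity of $\WP$ gives injectivity of $\piG_*|_{\cM_P}$. The only detail the paper records that you leave implicit is the Borel measurability of $\cM_P=\SH^{-1}\{\WP\}$ (via Lemma~\ref{lem:lsc}), which is what makes the assertion $P(\cM_P)=1$ well posed.
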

\begin{proof}
If $P=\delta_{\0}$, then $W_P=\emptyset$ and $\cM_P=\{\0\}$, and the claim is trivial. Otherwise we may assume that $P(\MW\setminus\{\0\})=1$. Then by Lemma~\ref{lem:lsc}, the set $\cM_P$ is measurable. By Lemma~\ref{lemma:SH-inv-ergod} we have $P(\cM_P)=1$. The injectivity of 
$\piG_*|_{\cM_P}$ follows from Lemma~\ref{lemma:shifted-versions}, where we use that $\WP$ is aperiodic.
\end{proof}

In order to infer results on $\MWG$ from $\MW$, we need to ``lift'' invariant probability measures from $\MWG$ to $\MW$. This is the content of the following proposition.  

\begin{proposition}\label{prop:main}
Let $\PG$ be an ergodic $S$-invariant probability measure on $\MWG$, and denote by $\cP(\PG)$ the family of all $S$-invariant probability measures $P$ on $\MW$ that project to $\PG$, i.e., for which $P\circ(\piG_*)^{-1}=\PG$. Then the following hold.
\begin{compactenum}[a)]
\item $\cP(\PG)\neq\emptyset$.
\item  Each  $P\in\cP(\PG)$ has an ergodic decomposition $P=\int P_e\circ\sigma_h^{-1}\ d\rho(h)$ for some compactly supported probability measure $\rho$ on $H$ and some ergodic $S$-invariant probability measure $P_e\in\cP(\PG)$.
\item If $P\in\cP(\PG)$ is ergodic and if $\WP\subseteq W$ is aperiodic, then $\piG_*:\MW\to\MWG$ is a measure-theoretic isomorphism between $(\MW,P,S)$ and $(\MWG,\PG,S)$, and both systems are extensions of $(\hX,m_\hX,\hT)$.
\end{compactenum}
\end{proposition}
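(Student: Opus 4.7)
The plan is to prove the three parts in order, with the real work happening in (a) and (b); part (c) follows quickly from results already assembled in the excerpt.

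For part (a), I would produce the lift by a Markov-Kakutani argument. The set $\mathcal{N}$ of Borel probability measures on $\MW$ that push forward to $\PG$ under $\piG_*$ is non-empty (any Borel measurable section of $\piG_*$, or any disintegration, produces an element of $\mathcal{N}$), convex, and weak-$*$ compact in $\mathrm{Prob}(\MW)$. The translation action $S_g$ preserves $\mathcal{N}$ because $\PG$ is $S$-invariant and $\piG_*$ commutes with the $G$-action, the maps $S_g$ are continuous and affine on $\mathrm{Prob}(\MW)$, and they commute since $G$ is abelian. Markov-Kakutani then yields a common fixed point, giving the required $P\in\cP(\PG)$.

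For part (b), I would start with an arbitrary ergodic decomposition $P=\int P_\omega\,d\mu(\omega)$. Pushing this through $\piG_*$ yields $\PG=\int P_\omega\circ(\piG_*)^{-1}\,d\mu(\omega)$, and since each $P_\omega\circ(\piG_*)^{-1}$ is $S$-invariant while $\PG$ is ergodic (hence an extreme point among $S$-invariant measures), we must have $P_\omega\circ(\piG_*)^{-1}=\PG$ for $\mu$-a.e.\ $\omega$, so $P_\omega\in\cP(\PG)$. Fix one such ergodic component $P_e$. For any other ergodic component $P_\omega$, Lemma~\ref{lemma:SH-inv-ergod}(b) supplies an $h_\omega\in H$ such that $W_{P_\omega}$ is the $h_\omega$-translate of $W_{P_e}$, and the fiber description of Lemma~\ref{lemma:shifted-versions} combined with ergodicity forces $P_\omega=P_e\circ\sigma_{h_\omega}^{-1}$. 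A measurable selection for $\omega\mapsto h_\omega$ then pushes $\mu$ forward to a probability measure $\rho$ on $H$, and compactness of the support of $\rho$ follows from the constraint $W_{P_e}+h_\omega\subseteq W$, which traps $h_\omega$ in the compact set $\bigcap_{w\in W_{P_e}}(W-w)$.

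For part (c), the hypothesis that $\WP$ is aperiodic lets me invoke Corollary~\ref{coro:ergodic-oneone}: $\piG_*$ is injective on $\cM_P$, which has full $P$-measure, so $\piG_*$ realises a measure-theoretic isomorphism between $(\MW,P,S)$ and $(\MWG,\PG,S)$. Aperiodicity also forces $\WP\neq\emptyset$, hence $P(\MW\setminus\{\0\})=1$, and Proposition~\ref{prop:factor} then makes $(\MW,P,S)$ a measure-theoretic extension of $(\hX,m_\hX,\hT)$; this extension property propagates along the isomorphism to $(\MWG,\PG,S)$.

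The main obstacle is the identification $P_\omega=P_e\circ\sigma_{h_\omega}^{-1}$ in part (b). Lemma~\ref{lemma:shifted-versions} only constrains individual $\piG_*$-fibers, so some care is needed to promote a fiber-wise $H$-parameter into a single global shift; the period subgroup $H_{W_{P_e}}$ introduces a residual ambiguity that has to be resolved by invoking ergodicity of both measures, and producing a genuinely $\mu$-measurable selection $\omega\mapsto h_\omega$ relies on a standard Borel selection argument applied to the map $\omega\mapsto W_{P_\omega}$. The remaining steps are comparatively routine applications of the structural lemmas already in the excerpt.
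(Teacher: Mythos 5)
Your proposal is correct and, except for part a), follows essentially the paper's route. For a) the paper does not invoke Markov--Kakutani: it first builds a (generally non-invariant) lift $\PG\circ(\psi_*)^{-1}$ from a Borel section $\psi_*$ of $\piG_*$ obtained by measurable selection --- the same device you need to show $\mathcal N\neq\emptyset$ --- and then applies a Krylov--Bogoliubov averaging, using that each $P\circ S_g^{-1}$ still projects to $\PG$ and that pushforward by $\piG_*$ is continuous for the weak topology. Your fixed-point argument is an equally valid alternative, since $\mathcal N$ is convex, weak-$*$ closed in the compact set of probability measures on the compact space $\MW$, and preserved by the commuting continuous affine maps $(S_g)_*$; neither route avoids the selection step. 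For b) you correctly isolate the crux, and the paper resolves it exactly as you indicate but makes ``ergodicity of both measures'' precise: it forms the product of the disintegrations of $P_e$ and $P_\omega$ over $\PG$ (a relatively independent joining), so that almost every pair $(\nu,\nu')$ satisfies $\piG_*\nu=\piG_*\nu'$, hence $\nu'=\sigma_d\nu$ by Lemma~\ref{lemma:shifted-versions}, and then picks a pair with $\nu$ generic for $P_e$ and $\nu'$ generic for $P_\omega$ to conclude $P_\omega=P_e\circ\sigma_d^{-1}$. One caveat on your phrasing: the parameter supplied by Lemma~\ref{lemma:SH-inv-ergod}b) only agrees with this $d$ modulo $H_{W_{P_e}}$, and $P_e\circ\sigma_h^{-1}$ for $h\in H_{W_{P_e}}$ need not equal $P_e$; so the measurable selection must be performed at the level of measures, on the fibres of $\kappa(h):=P_e\circ\sigma_h^{-1}$ as in the paper, rather than on $\omega\mapsto W_{P_\omega}$. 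Your observation that every admissible shift lies in the compact set $\bigcap_{w\in W_{P_e}}(W-w)$ is a useful explicit justification of the compact support of $\rho$, which the paper passes over quickly. Part c) coincides with the paper's proof.
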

\begin{proof}
a)\ 
Denote by $\Gamma$ the set valued map (also called multifunction) from $\MWG$ to compact subsets of $\MW$ defined by $\Gamma(\nuG)=(\piG_*)^{-1}\{\nuG\}$.
It is measurable in the sense of \cite[Thm.~III.2]{CV1977}, because
$\Gamma^-(C):=\{\nuG\in\MWG:(\piG_*)^{-1}\{\nuG\}\cap C\neq\emptyset\}=\piG_*(C)$
is compact and hence Borel measurable for any closed $C\subseteq\MW$.
Hence, by the measurable selection theorem \cite[Thm.~III.6]{CV1977}, there is a 
Borel measurable map
$\psi_*^{}:\MWG\to\MW$ such that $\piG_*\circ\psi^{}_*=\id_{\MWG}$.
In particular, $P:=\PG\circ (\psi^{}_*)^{-1}$ is a well defined probability measure on $\MW$ that projects to $\PG$. The measure $P$ is not necessarily $S$-invariant,  but a Krylov-Bogoliubov construction on $P$, see e.g.~\cite[Thm.~8.10]{EinWard2011}, provides an $S$-invariant probability measure in $\cP(\PG)$. The latter holds since  also $(P\circ S_g^{-1})$ projects to $\PG$ for every $g\in G$ as  $(P\circ S_g^{-1})\circ(\piG_*)^{-1}=P\circ(\piG_*\circ S_g)^{-1}=P\circ(S_g\circ\piG_*)^{-1}=P\circ(\piG_*)^{-1}\circ S_g^{-1}=\PG\circ S_g^{-1}=\PG$, and  since the measure transport by $\piG_*$ is continuous w.r.t. the weak topology.
\\
b)\ 
For any $P\in\cP(\PG)$, its ergodic decomposition \cite[Thm.~8.20]{EinWard2011} can be written as 
\begin{equation}\label{eq:erg-decomp-1}
P=\int_{\MW} P_\mu\ dP(\mu)\ ,
\end{equation}
where the probability measures $P_\mu$ on $\MW$ are ergodic and where $\mu\mapsto P_\mu$ is Borel measurable. Then
\begin{equation*}
\PG=P\circ(\piG_*)^{-1}=
\int_{\MW} P_\mu\circ(\piG_*)^{-1}\ dP(\mu)\ ,
\end{equation*}
where all $P_\mu\circ(\piG_*)^{-1}$ are ergodic $S$-invariant measures on $\MWG$.
As $\PG$ itself is ergodic, 
$\PG=P_\mu\circ(\piG_*)^{-1}$ for $P$-a.a. $\mu$, so that
$P_\mu\in\cP(\PG)$ for $P$-a.a. $\mu$.

Fix any measure $P_e$ from the ergodic decomposition. Let $P_\mu$ be any other measure from this decomposition. $P_e$ and $P_\mu$ can be disintegrated over $\PG$, namely there are systems $\{p_{\nuG}:\nuG\in\MWG\}$ and $\{p_{\nuG}':\nuG\in\MWG\}$  of probability measures  on $\MW$, such that
\begin{equation*}
P_e=\int_{\MWG}p_{\nuG}\ d\PG(\nuG)
\quad\text{and}\quad
P_\mu=\int_{\MWG}p'_{\nuG}\ d\PG(\nuG)\ .
\end{equation*}
Hence, for $\PG$-a.a. $\nuG\in\MWG$ and $(p_{\nuG}\otimes p_{\nuG}')$-a.a. $(\nu,\nu')\in\MW\times\MW$, we have $\piG_*\nu=\nuG=\piG_*\nu'$, so that $\nu'=\sigma_d\nu$ for some $d=d(\nu,\nu')\in H$. Since we may assume that $\nu$ and $\nu'$ are generic for 
$P_e$ and $P_\mu$, respectively, we can conclude
$P_\mu=P_e\circ\sigma_d^{-1}$. 

Consider the map $\kappa:H\to\cP(\PG),\ h\mapsto P_e\circ\sigma_h^{-1}$. It is continuous so that $\kappa(H)$ is a compact subset of the space of probability measures on $\MW$. Observe that the set $\kappa(H)$ does not depend on the choice of a particular $P_e$ in the definition of~$\kappa$. As
$P_\mu\in\kappa(H)$ for $P$-a.a. $\mu$, we can rewrite the ergodic decomposition (\ref{eq:erg-decomp-1}) as
\begin{equation}
P=\int_{\kappa(H)}\tilde P\ d\tilde\rho(\tilde P)\ ,
\end{equation}
where $\tilde\rho$ is the distribution of the random measures $P_\mu$ under $P$.

The set valued map $\Gamma:P\mapsto\kappa^{-1}(\{P\})$ from $\cP(\PG)$ to compact subsets of $H$ is Borel measurable by the same arguments as in part a) of the proof.
Hence, by the measurable selection theorem \cite[Thm.~III.6]{CV1977}, there is a 
Borel measurable map
$\kappa^\dagger:\kappa(H)\to H$ such that $\kappa\circ\kappa^\dagger=\id_{\kappa(H)}$.
In particular, $\rho:=\tilde\rho\circ (\kappa^\dagger)^{-1}$ is a well defined probability measure on $H$, and
\begin{equation}
P=\int_{\kappa(H)}\kappa(\kappa^\dagger(\tilde P))\ d\tilde\rho(\tilde P)
=\int_{H}\kappa(h)\ d\rho(h)
=\int_{H}P_e\circ\sigma_h^{-1}\ d\rho(h)
\ .
\end{equation}
c)\ 
If $P\in\cP(\PG)$ is ergodic and if $\WP$ is aperiodic, then
$\piG_*:(\MW,P,S)\to(\MWG,\PG,S)$ is a measure-theoretic isomorphism in view of Corollary~\ref{coro:ergodic-oneone}, and both systems are extensions of $(\hX,m_\hX,\hT)$ by Proposition~\ref{prop:factor}. Here we use $m_H(W)\ge m_H(W_P)>0$, as the aperiodic Haar regular set $W_P$ is Haar aperiodic.
\end{proof}

\section{Periodic windows and quotient cut-and-project schemes}\label{sec:periods}

For a given a subset $A\subseteq H$, recall its period group $H_A=\left\{h\in H: h+A=A\right\}$. The set $A\subseteq H$ is (topologically) aperiodic, if $H_A=\{0\}$.
\begin{lemma}\label{lem:aper}
\begin{compactenum}[a)]
\item $H_A\subseteq H_{\bar A}\cap H_{\inn(A)}$.
\item If $A$ is closed, then also $H_A$ is closed.
\item If $\inn(\bar A)= \inn(A)$ (e.g. if
$A$ is closed), then $H_{\inn(A)}=H_{\overline{\inn(A)}}$.
\item If $A$ is compact and nonempty, then $H_A$ is compact.
\end{compactenum}
\end{lemma}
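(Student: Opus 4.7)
The plan is to dispatch the four parts in order, leveraging that translation by $h\in H$ is a homeomorphism and a group isomorphism of $H$, and that $A\mapsto H_A$ is built from the action of $H$ on itself.

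For part a), I would note that for any $h\in H$, the translation $\tau_h:x\mapsto h+x$ is a homeomorphism of $H$, so it commutes with both closure and interior: $h+\bar A=\overline{h+A}$ and $h+\inn(A)=\inn(h+A)$. Thus if $h+A=A$, applying these identities yields $h+\bar A=\bar A$ and $h+\inn(A)=\inn(A)$, giving $H_A\subseteq H_{\bar A}\cap H_{\inn(A)}$.

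For part b), I would take a net $h_\alpha\in H_A$ with $h_\alpha\to h$ and show $h+A=A$. For any $a\in A$ we have $h_\alpha+a\in A$, so $h+a=\lim(h_\alpha+a)\in A$ since $A$ is closed. Conversely, for $b\in A$, since $h_\alpha\in H_A$ implies $-h_\alpha\in H_A$, we have $-h_\alpha+b\in A$, and the limit $-h+b$ lies in $A$; hence $b=h+(-h+b)\in h+A$. So $h+A=A$.

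For part c), one inclusion is immediate from part a) applied to $\inn(A)$: its closure is $\overline{\inn(A)}$, so $H_{\inn(A)}\subseteq H_{\overline{\inn(A)}}$. For the reverse inclusion I would again apply part a) to get $H_{\overline{\inn(A)}}\subseteq H_{\inn(\overline{\inn(A)})}$ and then check that $\inn(\overline{\inn(A)})=\inn(A)$ under the assumption $\inn(\bar A)=\inn(A)$: the inclusion $\inn(A)\subseteq\inn(\overline{\inn(A)})$ holds since $\inn(A)$ is open and contained in $\overline{\inn(A)}$, while $\overline{\inn(A)}\subseteq\bar A$ gives $\inn(\overline{\inn(A)})\subseteq\inn(\bar A)=\inn(A)$.

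For part d), $H_A$ is closed by part b) (compact sets are closed in Hausdorff groups). Fixing any $a\in A$, which exists by nonemptiness, every $h\in H_A$ satisfies $h+a\in h+A=A$, so $H_A\subseteq A-a$. Since $A-a$ is compact, $H_A$ is a closed subset of a compact set, hence compact. No step is really a main obstacle; the only subtlety is the interior-closure identity in part c), which is the reason for the hypothesis $\inn(\bar A)=\inn(A)$.
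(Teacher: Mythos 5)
Your proof is correct and follows essentially the same route as the paper's: translations are homeomorphisms for a), the $\pm h_\alpha$ limit argument for b), the identity $\inn(\overline{\inn(A)})=\inn(A)$ for c), and closedness plus containment in a translate of $A$ for d). The only (harmless) differences are cosmetic: you use nets where the paper uses sequences, and you spell out the compactness argument in d) that the paper leaves implicit.
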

\begin{proof}
a) For each $h\in H$, the translation by $h$ is a homeomorphism on $H$.\\
b) Let $h_n\in H_A$, $h=\lim_n h_n$. If $w\in A$, then $\pm h+w=\lim_n(\pm h_n+w)\in A$, because all $\pm h_n+w$ are in $A$ and $A$ is closed.  This shows $\pm h +A\subseteq A$, i.e., $h+A=A$. \\
c) The assumption implies $\inn(\overline{\inn(A)})=\inn(A)$. Indeed, $\inn(A)\subseteq \overline{\inn(A)}$ implies $\inn(A)\subseteq \inn(\overline{\inn(A)})$, and $\inn(A)\subseteq A$ implies $\inn(\overline{\inn(A)})\subseteq \inn(\overline{A})\subseteq\inn(A)$. Now the result follows from the implications
\begin{displaymath}
h+\inn(A)=\inn(A)  \Longrightarrow h+\overline{\inn(A)}=\overline{\inn(A)} \Longrightarrow h+\inn(\overline{\inn(A)})=\inn(\overline{\inn(A)}) \ .
\end{displaymath}
d) By definition we have $H_A+A=A$. As $A$ is compact nonempty, $H_A$ must be compact, too.
\end{proof}

\begin{remark}\label{rem:nd-boundary}
The condition $\inn(\overline A)\subseteq \inn(A)$ in part c) of the lemma implies that the topological boundary
$\partial A=\overline A\setminus\inn(A)$ is nowhere dense. Indeed, nowhere denseness is equivalent to the condition $\inn(\overline A)\subseteq \overline{\inn(A)}$. These two conditions are however not equivalent, as is easily seen by looking at open dense sets $A$.
\end{remark}

For a given cut-and-project scheme with window $W\subseteq H$, an important example is the period group $H_W$ of the window. Some structural results for model sets rely on the assumption of an aperiodic window. Aperiodicity may however be assumed without loss of generality by passing to an associated quotient cut-and-project scheme, where the periods of the window have been factored out, compare  \cite[Section 9]{BLM07}. As this construction has not been fully described before, we present it here in some detail.

Let $(G,H,\LL)$ and $\hX=(G\times H)/\LL$ be as before, with quotient map $\pihX:G\times H\to\hX$. Fix any compact subgroup $H_0\subseteq H$ and consider
$H':=H/H_0$ with factor map $\varphi:H\to H'$. Consider $\cH_0:=\{0\}\times H_0\subseteq G\times H$ and note that $\LL\cap\cH_0=\{(0,0)\}$ as $\piG$ is \oneone on $\LL$. Observe next that $\iota:(G\times H)/\cH_0\to G\times H'$, $\iota\left((g,h)+\cH_0\right):=(g,h+H_0)$, is a (rather trivial) isomorphism of topological groups.
Denote by $\Phi$ the quotient map $\Phi:G\times H\to(G\times H)/\cH_0$, and let $\LL':=\iota(\Phi(\LL))$. 

\begin{lemma}\label{lemma:closed-subgroup}
$\LL'=\iota(\Phi(\LL))$ is a discrete subgroup of $G\times H'$.
\end{lemma}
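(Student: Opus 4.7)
My plan is to verify the subgroup property and then establish discreteness at the identity $(0,0+H_0)$, from which discreteness at every point follows by translation invariance in the topological group $G\times H'$. The subgroup property is immediate because $\LL'$ is the image of the subgroup $\LL\subseteq G\times H$ under the composition of group homomorphisms $\iota\circ\Phi$. A useful preliminary observation is that $(\iota\circ\Phi)|_\LL\colon\LL\to\LL'$ is injective: its kernel equals $\LL\cap\ker(\iota\circ\Phi)=\LL\cap\cH_0=\{(0,0)\}$, since elements of $\cH_0=\{0\}\times H_0$ have trivial $G$-coordinate and $\piG|_\LL$ is \oneone by the standing hypothesis.

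For the discreteness at the identity, I would first pick a relatively compact neighborhood $V$ of $(0,0)$ in $G\times H$, available by local compactness of $G\times H$. Set $V':=(\iota\circ\Phi)(V)$; this is a neighborhood of $(0,0+H_0)$ in $G\times H'$, because $\iota\circ\Phi$ is an open map, being the composition of the open quotient map $\Phi\colon G\times H\to(G\times H)/\cH_0$ with the topological isomorphism $\iota$.

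The core step is to bound $V'\cap\LL'$. If $(\iota\circ\Phi)(\ell)\in V'$ for some $\ell\in\LL$, then $(\iota\circ\Phi)(\ell)=(\iota\circ\Phi)(v)$ for some $v\in V$, hence $\ell-v\in\ker(\iota\circ\Phi)=\cH_0$, and so $\ell\in V+\cH_0$. Since $V$ is relatively compact and $\cH_0$ is compact, the sum $V+\cH_0$ is relatively compact (being contained in the image of a compact product under continuous addition). The discreteness of $\LL$ then forces $\LL\cap(V+\cH_0)$ to be finite, and the injectivity of $(\iota\circ\Phi)|_\LL$ transports this to finiteness of $V'\cap\LL'$.

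To conclude, since $G\times H'$ is Hausdorff, I can shrink $V'$ to an open neighborhood $U$ of $(0,0+H_0)$ that excludes the finitely many other elements of $V'\cap\LL'$; then $U\cap\LL'=\{(0,0+H_0)\}$, which exhibits $(0,0+H_0)$ as an isolated point and hence establishes discreteness of $\LL'$. The only subtlety is ensuring that some neighborhood of the identity in $G\times H'$ has finite intersection with $\LL'$, and this is precisely where the compactness of $H_0$ enters, by keeping the $\cH_0$-thickening of a relatively compact set relatively compact.
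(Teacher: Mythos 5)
Your proof is correct and follows essentially the same route as the paper: both establish the subgroup property via the homomorphism $\iota\circ\Phi$, and both obtain discreteness by thickening a (relatively) compact zero neighborhood of $G\times H$ by the compact set $\cH_0$, using discreteness of $\LL$ to get a finite intersection, and then shrinking the neighborhood to isolate the identity. The only cosmetic difference is that the paper starts from a compact zero neighborhood $\mathcal U$ already satisfying $\LL\cap\mathcal U=\{0\}$, whereas you start from an arbitrary relatively compact one and record the injectivity of $(\iota\circ\Phi)|_{\LL}$ explicitly; this changes nothing of substance.
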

\begin{proof}
As $\iota\circ\Phi$ is a group homomorphism, $\LL'$ is a subgroup of $G\times H$. We prove that $\LL'$ is discrete:
Take a compact zero neighborhood $\mathcal U\subseteq G\times H$ such that $\LL\cap \mathcal U=\{0\}$, which is possible since $\LL$ is discrete in $G\times H$. Now $\LL'\cap \iota(\mathcal U+\cH_0)$ contains $0$ and is finite as $\mathcal U+\cH_0\subseteq G\times H$ is compact. Hence there is a zero neighborhood $\mathcal V\subseteq \mathcal U$ such that $\LL' \cap \iota(\mathcal V+\cH_0)=\{0\}$.
\end{proof}

This lemma allows to consider the locally compact abelian quotient group $\hXprime=(G\times H')/\LL'$. Now
$(G,H',\LL')$ is a cut-and-project scheme with associated torus $\hXprime$,  compare \cite[Section 9]{BLM07}, which we call a quotient cut-and-project scheme. For the convenience of the reader, we give a proof which is based on the following general facts about quotient groups.

\begin{lemma}\cite[Proposition III.20]{Husain1966}\label{lemma:isomorphisms}\\
$(G\times H)/(\LL+\cH_0)$ is isomorphic (as a topological group) to each of the following groups:
\begin{compactenum}[a)]
\item $\hX/\pihX(\LL+\cH_0)=\hX/\pihX(\cH_0)$
\item $((G\times H)/\cH_0)/\Phi(\LL+\cH_0)=((G\times H)/\cH_0)/\Phi(\LL)$
\end{compactenum}
\end{lemma}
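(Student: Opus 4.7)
The plan is to invoke the standard third--isomorphism theorem for topological groups from \cite{Husain1966} in each case, after verifying its hypotheses. The underlying algebraic observations in both parts are the same: since $\pihX(\LL)=\{0\}$ in $\hX$, one has $\pihX(\LL+\cH_0)=\pihX(\cH_0)$, and since $\Phi(\cH_0)=\{0\}$ in $(G\times H)/\cH_0$, one has $\Phi(\LL+\cH_0)=\Phi(\LL)$. These two identities already take care of the ``$=$'' signs appearing in the statement.

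First I would check that the subgroups to be factored out are closed, so that the outer quotients are Hausdorff topological groups. For part a), $\pihX(\cH_0)$ is the continuous image of the compact group $\cH_0$ in the Hausdorff group $\hX$, hence compact and closed. For part b), the subgroup $\Phi(\LL)$ corresponds under the topological isomorphism $\iota$ from Lemma~\ref{lemma:closed-subgroup} to $\LL'$, which is discrete and hence closed in $G\times H'$, and so $\Phi(\LL)$ is closed in $(G\times H)/\cH_0$.

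Next I would note that $\LL+\cH_0$ is a closed subgroup of $G\times H$ because it is the sum of the closed subgroup $\LL$ with the compact subgroup $\cH_0$, so that $(G\times H)/(\LL+\cH_0)$ is a well-defined locally compact Hausdorff group. For part a) I would then consider the composition
\begin{equation*}
G\times H \xrightarrow{\pihX} \hX \longrightarrow \hX/\pihX(\cH_0),
\end{equation*}
whose kernel equals $\pihX^{-1}(\pihX(\cH_0))=\LL+\cH_0$. For part b) I would use the analogous composition
\begin{equation*}
G\times H \xrightarrow{\Phi} (G\times H)/\cH_0 \longrightarrow ((G\times H)/\cH_0)/\Phi(\LL),
\end{equation*}
whose kernel is $\Phi^{-1}(\Phi(\LL))=\LL+\cH_0$ by the observation above. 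The universal property of quotients then yields a continuous bijective homomorphism from $(G\times H)/(\LL+\cH_0)$ onto the respective target group.

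The main (and essentially only) obstacle is to argue that these induced continuous bijections are in fact homeomorphisms, i.e.\ topological group isomorphisms. This follows because each of the two factor maps in the above compositions is continuous, surjective and open (the quotient map of a topological group is always open), so the composition is a continuous open surjection with kernel $\LL+\cH_0$; the induced map to the quotient is then automatically open as well. Since this is precisely the content of the third--isomorphism theorem for topological groups as stated in \cite[Proposition III.20]{Husain1966}, the verification reduces to the structural checks carried out in the preceding steps.
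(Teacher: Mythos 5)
Your proposal is correct and takes essentially the same route as the paper, which simply cites \cite[Proposition III.20]{Husain1966} (the third isomorphism theorem for topological groups) without further argument; you additionally supply the routine verifications (closedness of $\LL+\cH_0$, $\pihX(\cH_0)$ and $\Phi(\LL)$, the kernel computations, and openness of the induced bijections), all of which are accurate.
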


\begin{corollary}\label{coro:iso-quotients}
In the above setting,  $(G, H',\LL')$ is a cut-and-project scheme, 
in particular $\LL'$ is cocompact.
The topological quotient group $\hXprime=(G\times H')/\LL'$ is isomorphic to $\hX/\pihX(\cH_0)$. 
\end{corollary}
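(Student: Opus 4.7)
The plan is to read off the corollary from Lemma~\ref{lemma:isomorphisms}, once one observes that $\iota$ transports the discrete subgroup $\Phi(\LL)$ onto $\LL'$ by definition. First I would establish the isomorphism statement. Lemma~\ref{lemma:isomorphisms} gives
\begin{equation*}
\hX/\pihX(\cH_0) \;\cong\; (G\times H)/(\LL+\cH_0) \;\cong\; \bigl((G\times H)/\cH_0\bigr)/\Phi(\LL)\ .
\end{equation*}
Since $\iota:(G\times H)/\cH_0 \to G\times H'$ is an isomorphism of topological groups and $\iota(\Phi(\LL))=\LL'$ by construction, passing to quotients yields
$\bigl((G\times H)/\cH_0\bigr)/\Phi(\LL)\cong (G\times H')/\LL'=\hXprime$, which combined with the above chain gives $\hXprime\cong \hX/\pihX(\cH_0)$.

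Next, I would use this isomorphism to obtain cocompactness of $\LL'$. By hypothesis $\hX$ is compact, and $\pihX(\cH_0)$ is the image of the compact group $\cH_0$ under the continuous homomorphism $\pihX$, hence a compact (and therefore closed) subgroup of $\hX$. The quotient $\hX/\pihX(\cH_0)$ is then compact, so the isomorphic group $\hXprime$ is compact; equivalently, $\LL'$ is a cocompact subgroup of $G\times H'$. Together with Lemma~\ref{lemma:closed-subgroup} this tells us that $\LL'$ is a cocompact lattice in $G\times H'$.

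To finish the verification that $(G, H', \LL')$ is a cut-and-project scheme, I would check the remaining two conditions by direct computation. Any element of $\LL'$ is of the form $(g, h+H_0)$ for some $(g,h)\in\LL$; if its $G$-component vanishes, then $(0,h)\in\LL$ and the injectivity of $\piG|_\LL$ forces $h=0$, so the element is the identity of $G\times H'$. Hence $\piG|_{\LL'}$ is \oneone. For the denseness claim, observe that $\piHprime(\LL')=\varphi(\piH(\LL))$; as $\piH(\LL)$ is dense in $H$ and $\varphi:H\to H'$ is continuous and surjective, the image is dense in $H'$.

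I do not expect a serious obstacle: the abstract isomorphism is already packaged by Lemma~\ref{lemma:isomorphisms}, discreteness of $\LL'$ is Lemma~\ref{lemma:closed-subgroup}, and the remaining verifications are routine transfers of properties of $\LL$ through the trivial isomorphism $\iota$. The only point where one must be mildly careful is in using that $\pihX(\cH_0)$ is closed (so that the quotient $\hX/\pihX(\cH_0)$ is Hausdorff and the isomorphism statement makes sense as a statement about topological groups), and this is immediate from compactness of $\cH_0$.
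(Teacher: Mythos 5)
Your proposal is correct and follows essentially the same route as the paper's proof: both deduce the isomorphism $\hXprime\cong\hX/\pihX(\cH_0)$ by combining Lemma~\ref{lemma:isomorphisms} with the fact that $\iota$ carries $\Phi(\LL)$ onto $\LL'$, obtain cocompactness from compactness of $\pihX(\cH_0)$ and hence of the quotient, and verify injectivity of $\piG|_{\LL'}$ and denseness of $\piHprime(\LL')$ by transferring the corresponding properties of $\LL$ through $\iota\circ\Phi$ and $\varphi$. No gaps.
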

\begin{proof}
Projection properties are inherited: Assume that $0=\piG((\iota\circ\Phi)(\ell))=\ell_G$ for some $\ell\in \LL$. As $\piG$ is \oneone on $\LL$, we infer $\ell=0$, which implies $(\iota\circ\Phi)(\ell)=0$. Note also that $\overline{\piHprime(\LL')}=\overline{\varphi(\piH(\LL))}\supseteq \varphi(\overline{\piH(\LL)})=\varphi(H)=H'$. Here we used continuity and surjectivity of the projection map, together with the assumption that $\piH(\LL)$ is dense in $H$.\\
As $G\times H'=\iota\left((G\times H)/\cH_0\right)$ and $\LL'=\iota(\Phi(\LL))$, where $\iota$ is an isomorphism, we see that
$(G\times H')/\LL'$ is isomorphic to $((G\times H)/\cH_0)/\Phi(\LL)$.
Combining this with a) and b) of Lemma~\ref{lemma:isomorphisms}, we conclude that $(G\times H')/\LL'$ is isomorphic to $\hX/\pihX(\cH_0)$.
As $\cH_0$ is compact, $\pihX(\cH_0)$ is compact, so that $\hX/\pihX(\cH_0)$ is compact \cite[Theorem III.11]{Husain1966}.
In particular, $\LL'$ is cocompact in $G\times H'$.
\end{proof}
\begin{remark}\label{remark:Haar-measure-factor}
The factor map $\iota\circ\Phi:G\times H\to G\times H'$ carries over to a factor map $\widehat{\iota\Phi}:\hX\to\hXprime$, because $\iota(\Phi(x+\LL))=\iota(\Phi(x))+\LL'$.
It pushes the Haar measure $m_\hX$ to the Haar measure $m_{\hXprime}$ on $\hXprime$.
\end{remark}

If
$W\subseteq H$ is a window in $(G,H,\LL)$ and $H_0$ is a closed subgroup of $H$, we can consider the quotient cut-and-project scheme $(G,H',\LL')$ with window $W':=\varphi(W)$, because $W'$ inherits the basic topological properties from $W$:

\begin{lemma}\label{lemma:Wprime}
\begin{compactenum}[a)]
\item $W'$ is compact.
\item If $W$ is topologically regular, then so is $W'$.
\item If $W$ is Haar regular, then so is $W'$.
\end{compactenum}
\end{lemma}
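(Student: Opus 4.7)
Part (a) is immediate: $W' = \varphi(W)$ is the continuous image of a compact set under the quotient homomorphism $\varphi\colon H \to H'$, hence compact.

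For part (b), the key observation is that $\varphi$, being a quotient homomorphism of topological groups, is both continuous and open. Openness yields $\varphi(\inn(W)) \subseteq \inn(W')$, since $\varphi(\inn(W))$ is open in $H'$ and contained in $W'$. For the reverse direction $W' \subseteq \overline{\inn(W')}$, I would pick any $w' \in W'$, write $w' = \varphi(w)$ with $w \in W = \overline{\inn(W)}$, approximate (using metrisability of $H$) $w$ by a sequence $w_n \in \inn(W)$, and conclude that $\varphi(w_n) \to w'$ with $\varphi(w_n) \in \inn(W')$. The opposite inclusion $\overline{\inn(W')} \subseteq W'$ is immediate, since $W'$ is closed by part (a).

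For part (c), I would invoke the characterisation of Haar regularity from Remark~\ref{rem:hr}a), which applies verbatim to the locally compact abelian group $H'$: $W'$ is Haar regular iff every open $U' \subseteq H'$ meeting $W'$ satisfies $m_{H'}(U' \cap W') > 0$. Given such a $U'$, its preimage $U := \varphi^{-1}(U')$ is open in $H$ and meets $W$ (any preimage in $W$ of a point of $U' \cap W'$ lies in $U \cap W$), so Haar regularity of $W$ yields $m_H(U \cap W) > 0$. Since $U \cap W \subseteq \varphi^{-1}(U' \cap W')$, the latter set has positive $m_H$-measure. Weil's integration formula for the compact subgroup $H_0 \subseteq H$ then relates $m_H \circ \varphi^{-1}$ to $m_{H'}$ via $m_H(\varphi^{-1}(A')) = m_{H_0}(H_0)\cdot m_{H'}(A')$ for every Borel $A' \subseteq H'$ under a suitable normalisation, and since positivity of these quantities is independent of the normalisation, we conclude $m_{H'}(U' \cap W') > 0$.

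The only mildly subtle step is part (c), where I rely on compactness of $H_0$ so that Weil's formula gives a clean, finite proportionality between $m_H \circ \varphi^{-1}$ and $m_{H'}$ on Borel subsets of $H'$; without compactness of $H_0$ the pushforward $m_H \circ \varphi^{-1}$ could fail to be $\sigma$-finite and the argument would have to be rerouted. Parts (a) and (b) are routine topology, resting on the twin facts that $\varphi$ is continuous and open.
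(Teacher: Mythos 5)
Your proof is correct and follows essentially the same route as the paper's: (a) continuity of $\varphi$, (b) continuity plus openness of $\varphi$, and (c) the inclusion $\varphi^{-1}(U)\cap W\subseteq\varphi^{-1}(U\cap W')$ combined with the compatibility of $m_H\circ\varphi^{-1}$ with $m_{H'}$ --- the paper merely phrases (c) in the contrapositive. Your explicit attention to the normalisation in Weil's formula for the compact subgroup $H_0$ is a point the paper passes over silently.
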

\begin{proof}
a) Recall that $\varphi:H\to H'$ is continuous and open \cite[Theorem III.10]{Husain1966}.
In particular, $W'=\varphi(W)$ is compact. 

b) If $W$ is topologically regular, i.e. if 
$W=\overline{\inn(W)}$, then 
\begin{equation*}
W'=\varphi(\overline{\inn(W)})\subseteq\overline{\varphi(\inn(W))}\subseteq\overline{\inn(\varphi(W))}=\overline{\inn(W')}\ ,
\end{equation*}
where we used continuity of $\varphi$ for the first inclusion and openness for the second one. 

c) Suppose now that $W$ is Haar regular, and let $U\subseteq H'$ be open with $m_{H'}(U\cap W')=0$. We must show that $U\cap W'=\emptyset$. By definition of the quotient topology, $\varphi^{-1}(U)$ is open in $H$. As $\varphi^{-1}(U)\cap W\subseteq\varphi^{-1}(U\cap W')$, we have $m_H(\varphi^{-1}(U)\cap W)\leqslant m_H(\varphi^{-1}(U\cap W'))=m_{H'}(U\cap W')=0$, so that $\varphi^{-1}(U)\cap W=\emptyset$, because $W$ is Haar regular. This implies $U\cap W'= U\cap\varphi(W)=\emptyset$.
\end{proof}

 An important example is $H_0:=H_W$, the period group of a window, because $W'$ is aperiodic in this case.
We will study the relations between the two cut-and-project schemes $(G,H,\LL)$ and $(G',H',\LL')$ with associated windows $W$ and $W'$, respectively. For clarity we write $\cH_W$ instead of $\cH_0$ and add the index $W$ also to the quotient maps. Recall that the quotient map $\iota\circ \Phi_W: G\times H\to G\times H'$ is given by $x\mapsto (\iota\circ\Phi_W)(x)=\iota(x+\cH_W)=(x_G, x_H+ H_W)$.

\begin{lemma}\label{lemma:Wprime-aperiodic}
The window $W'=\varphi_W(W)$ is an aperiodic subset of $H'$.
\end{lemma}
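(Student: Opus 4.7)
The plan is a direct unpacking of the definitions. I would show the contrapositive-style implication: if $h' \in H'$ satisfies $h' + W' = W'$, then $h' = 0$ in $H' = H/H_W$.

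First, I would pick any lift $h \in H$ of $h'$, so that $\varphi_W(h) = h'$. Using that $\varphi_W$ is a group homomorphism, the assumption $h' + W' = W'$ rewrites as $\varphi_W(h + W) = \varphi_W(W)$. The key step is then to pull this equality back to $H$ via $\varphi_W^{-1}$: since $W$ is saturated under $H_W$ (indeed $W + H_W = W$ by definition of the period group), we have $\varphi_W^{-1}(\varphi_W(W)) = W + H_W = W$, and analogously $\varphi_W^{-1}(\varphi_W(h + W)) = h + W + H_W = h + W$. Combining these identifies $h + W = W$, i.e.\ $h \in H_W$, and therefore $h' = \varphi_W(h) = 0$ in $H'$.

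There is no real obstacle here; the only point requiring a moment of care is the saturation identity $W + H_W = W$, but this is immediate from the definition $H_W = \{h : h + W = W\}$. The argument does not use any topological property of $W$ beyond what is already built into the construction of $(G, H', \LL')$ from Corollary~\ref{coro:iso-quotients} and Lemma~\ref{lemma:Wprime}, so it applies in the full generality of the excerpt.
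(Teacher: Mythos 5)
Your proof is correct and is essentially the same as the paper's: both pull the equality $\varphi_W(h+W)=\varphi_W(W)$ back to $h+W+H_W=W+H_W$ and then use the saturation identity $W+H_W=W$ to conclude $h\in H_W$. No gaps.
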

\begin{proof}
Suppose that $W'+h'=W'$ for some $h'=\varphi_W(h)\in H'$. Then 
$\varphi_W(W+h)=\varphi_W(W)$,
in particular $W+h+H_W=W+H_W$. As $W+H_W=W$, this shows that $h\in H_W$, so that $h'=\varphi_W(h)$ is the neutral element of $H'$.
\end{proof}

\begin{lemma}\label{lem:quotientnu}
Let $x\in G\times H$ and $x'\in G\times H'$ be such that $x'=(\iota\circ \Phi)(x)$, and let $W'=\varphi_W(W)$.
\begin{compactenum}[a)]
\item  The sets $(x+\LL)\cap (G\times W)$ and $(x'+\LL')\cap (G\times W')$ are in $1-1$ correspondence  via the quotient map $\iota\circ \Phi$. In particular we have
$\piG((x+\LL)\cap (G\times W))=\piG((x'+\LL')\cap (G\times W'))$.
\item For every $h\in H_W$ we have $\nu_W(x+(0,h)+\LL)=\sigma_h\nuW(x+\LL)$, where $\sigma_h\nu(A)=\nu(A-(0,h))$.
\item We have $\nuWprime(x'+\LL')=\sum_{y\in (x+\LL)\cap (G\times W)}\delta_{\iota(y+\cH_W)}$, which implies $\piG_*(\nuWprime(x'+\LL'))=\piG_*(\nuW(x+\LL))$, i.e.
$\nuWGprime(x'+\LL')=\nuWG(x+\LL)$.
\end{compactenum}
\end{lemma}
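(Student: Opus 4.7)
My plan is to prove the three parts in the order (b), (a), (c), since (a) is the core of the statement and (c) is essentially a rewriting of (a).

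For part (b), I would argue directly from the definition of $\nuW$ in \eqref{eq:nuW-def}. Writing out
\begin{equation*}
\nuW(x+(0,h)+\LL)=\sum_{y\in (x+(0,h)+\LL)\cap(G\times W)}\delta_y=\sum_{z\in (x+\LL)\cap(G\times (W-h))}\delta_{z+(0,h)},
\end{equation*}
and using $h\in H_W$ so that $W-h=W$, the inner set is just $(x+\LL)\cap(G\times W)$. Since $\sigma_h\delta_z=\delta_{z+(0,h)}$ by the definition $(\sigma_h\nu)(A)=\nu(A-(0,h))$, this sum is exactly $\sigma_h\nuW(x+\LL)$.

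For part (a), I first observe that $\iota\circ\Phi$ sends any $y\in G\times H$ to $(y_G,y_H+H_W)$, so if $y\in(x+\LL)\cap(G\times W)$ then $\iota(\Phi(y))\in (\iota\circ\Phi)(x)+\LL'=x'+\LL'$ and $y_H+H_W\in\varphi_W(W)=W'$, giving a well-defined map into $(x'+\LL')\cap(G\times W')$. Injectivity: if two elements $y_1,y_2$ of $(x+\LL)\cap(G\times W)$ have the same image, their difference lies in both $\LL$ (since both are in $x+\LL$) and $\cH_W$; but $\LL\cap\cH_W=\{0\}$ because $\piG|_\LL$ is $1$-$1$, so $y_1=y_2$. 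The only genuine work is surjectivity. Given $z\in(x'+\LL')\cap(G\times W')$, write $z=x'+\ell'$ with $\ell'\in\LL'$. Since $\LL'=\iota(\Phi(\LL))$ and the restriction of $\iota\circ\Phi$ to $\LL$ is itself $1$-$1$ (same argument via $\LL\cap\cH_W=\{0\}$), there is a unique $\ell\in\LL$ with $(\iota\circ\Phi)(\ell)=\ell'$, and the candidate preimage is $y:=x+\ell$. Clearly $(\iota\circ\Phi)(y)=z$ and $y\in x+\LL$; the non-trivial assertion is that $y_H\in W$. The key point here, which I expect to be the one requiring the most care, is that $W$ is precisely $H_W$-saturated: because $W+H_W=W$ by the very definition of the period group, we have $\varphi_W^{-1}(W')=W$, and from $y_H+H_W\in W'$ this yields $y_H\in W$. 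The final equality $\piG((x+\LL)\cap(G\times W))=\piG((x'+\LL')\cap(G\times W'))$ is immediate from the identity $\piG\circ\iota\circ\Phi=\piG$ applied under the bijection.

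Part (c) is then a transparent consequence of (a). Expanding $\nuWprime(x'+\LL')=\sum_{z\in(x'+\LL')\cap(G\times W')}\delta_z$ and re-indexing via the bijection $y\leftrightarrow z=\iota(y+\cH_W)$ from (a) yields the claimed formula. Pushing forward by $\piG_*$ and using $\piG(\iota(y+\cH_W))=y_G=\piG(y)$, the two configurations on $G$ coincide termwise, i.e.\ $\nuWGprime(x'+\LL')=\nuWG(x+\LL)$.
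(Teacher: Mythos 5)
Your proposal is correct and follows essentially the same route as the paper: part (b) is the same direct re-indexing using $W-h=W$, part (a) establishes injectivity via $\LL\cap\cH_W=\{0\}$ (equivalently, injectivity of $\piG|_\LL$) and surjectivity via the saturation identity $W+H_W=W$, and part (c) is the same re-indexing through the bijection of (a). The only cosmetic difference is the order in which you treat the parts.
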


\begin{proof}
a) To show injectivity of the quotient map, assume that $(\iota\circ\Phi)(x+\ell_1)=(\iota\circ\Phi)(x+\ell_2)$, that is $x+\ell_1+\cH_W=x+\ell_2+\cH_W$. We thus can conclude $\ell_1-\ell_2\in \{0\}\times H_W$. Hence $\ell_{1,G}-\ell_{2,G}=0$, and as $\piG$ is  \oneone on $\LL$, we infer $\ell_{1,H}-\ell_{2,H}=0$. Hence $\ell_1=\ell_2$, and we get $x+\ell_1=x+\ell_2$.
To show that the quotient map is onto, assume without loss of generality that $W$ is nonempty. Take arbitrary $y'\in (x'+\LL')\cap(G\times W')$. Then $y'=\iota(x+\ell +\cH_W)$ for some $\ell\in \LL$, which implies $x_H+\ell_H+H_W\in W'=\{w+H_W: w\in W\}$. This 
implies $x_H+\ell_H\in W+H_W=W$,
which means that $y:=x+\ell\in (x+\LL)\cap (G\times W)$. The remaining statement is now obvious, since we have $\piG(x)=\piG((\iota\circ\Phi)(x))$.\\
b) Let $h\in H_W$ be given. A direct calculation yields 
\begin{displaymath}
\nuW(x+(0,h)+\LL)=\sum_{y\in (x+(0,h)+\LL)\cap (G\times W)}\delta_y=\sum_{y\in (x+\LL)\cap (G\times W)}\delta_{y+(0,h)}=\sigma_h \nuW(x+\LL).
\end{displaymath}
c) A direct calculation yields
\begin{displaymath}
\nuWprime(x'+\LL')=\sum_{y'\in (x'+\LL')\cap (G\times W')}\delta_{y'}
=\sum_{\iota(y+\cH_W)\in (x'+\LL')\cap (G\times W')}\delta_{\iota(y+\cH_W)}
=\sum_{y\in (x+\LL)\cap (G\times W)}\delta_{\iota(y+\cH_W)},
\end{displaymath}
where we use a) for the third equality. The remaining statement is now obvious.
\end{proof}

\begin{proposition}\label{prop:same-images}
Let $W'=\varphi_W(W)$.
\begin{compactenum}[a)]
\item  $\MWG=\piG_*(\MW)=\piG_*(\MWprime')=\MWGprime$.
\item $\QMG=m_\hX\circ(\nuWG)^{-1}=m_{\hXprime}\circ(\nuWGprime)^{-1}=\QMGprime$
\end{compactenum}
\end{proposition}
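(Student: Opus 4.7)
The plan is to reduce both parts of the proposition to the single identity
\begin{equation*}
\nuWG \;=\; \nuWGprime \circ \widehat{\iota\Phi_W},
\end{equation*}
which is just a restatement of Lemma~\ref{lem:quotientnu}c) phrased through the induced map $\widehat{\iota\Phi_W}\colon\hX\to\hXprime$. The two supplementary ingredients are (i) surjectivity of $\widehat{\iota\Phi_W}$, inherited from the evident surjectivity of $\iota\circ\Phi_W\colon G\times H\to G\times H'$, and (ii) the fact recorded in Remark~\ref{remark:Haar-measure-factor} that $\widehat{\iota\Phi_W}$ pushes $m_\hX$ forward to $m_\hXprime$.

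For part~a), I would first dispatch the two outer equalities by the standard continuity-and-compactness argument. Since $\piG_*\colon\cM\to\cMG$ is continuous and $\MW$ is compact, $\piG_*(\MW)$ is a closed subset of $\cMG$ that contains $\piG_*(\nuW(\hX))=\nuWG(\hX)$; hence $\MWG=\overline{\nuWG(\hX)}\subseteq\piG_*(\MW)$. The reverse inclusion follows from $\piG_*(\MW)=\piG_*(\overline{\nuW(\hX)})\subseteq\overline{\nuWG(\hX)}=\MWG$. Applied verbatim in the quotient scheme $(G,H',\LL')$, this also yields $\piG_*(\MWprime)=\MWGprime$. Given the outer equalities, the middle one reduces to $\MWG=\MWGprime$: the boxed identity together with surjectivity of $\widehat{\iota\Phi_W}$ forces $\nuWG(\hX)=\nuWGprime(\hXprime)$, and taking vague closures gives the claim.

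For part~b), I would perform a one-line change of variables. For any Borel set $B\subseteq\cMG$,
\begin{equation*}
\QMG(B)=m_\hX\bigl((\nuWG)^{-1}(B)\bigr)=m_\hX\bigl((\widehat{\iota\Phi_W})^{-1}((\nuWGprime)^{-1}(B))\bigr)=m_\hXprime\bigl((\nuWGprime)^{-1}(B)\bigr)=\QMGprime(B),
\end{equation*}
where the second equality is the boxed factorisation and the third is the push-forward statement of Remark~\ref{remark:Haar-measure-factor}.

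The proof is essentially bookkeeping, so the main pitfall is notational rather than conceptual: one must keep straight that $\MWprime$ lives in $\cM(G\times H')$ while $\MW$ lives in $\cM(G\times H)$, so the equality $\piG_*(\MW)=\piG_*(\MWprime)$ is genuinely an equality of subsets of the common target $\cMG$, not of the domains. A small auxiliary point is Borel measurability of $\nuWGprime$, needed to justify the preimages in the change-of-variables formula; this follows because $W'=\varphi_W(W)$ is compact by Lemma~\ref{lemma:Wprime}a), so that $\nuWprime$ is upper semicontinuous in the quotient scheme by \cite[Prop.~3.3]{KR2015}.
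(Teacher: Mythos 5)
Your proposal is correct and follows essentially the same route as the paper: part a) rests on Lemma~\ref{lem:quotientnu}c) (which the paper phrases as a chain of equivalences giving $\piG_*(\nuW(\hX))=\piG_*(\nuWprime(\hXprime))$ and you phrase as $\nuWG=\nuWGprime\circ\widehat{\iota\Phi_W}$ plus surjectivity of the induced map) followed by the continuity-and-compactness passage to closures, and part b) is the identical change of variables via Remark~\ref{remark:Haar-measure-factor}.
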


\begin{proof}
a) Note the following chain of equivalences:
\begin{displaymath}
\begin{split}
\nuG\in \piG_*(\nuW(\hX)) &
\Leftrightarrow \exists x\in G\times H: \nuG=\piG_*(\nuW(x+\LL))\\
&\Leftrightarrow \exists x'\in G\times H': \nuG=\piG_*(\nuWprime(x'+\LL'))
\\
&\Leftrightarrow \nuG\in \piG_*(\nuWprime(\hXprime)),
\end{split}
\end{displaymath}
where we used Lemma~\ref{lem:quotientnu} c). This means that $\piG_*(\nuW(\hX))=\piG_*(\nuWprime(\hXprime))$. Now a) of the proposition follows from continuity of $\piG_*$ and compactness of $\MW=\overline{\nuW(\hX)}\subseteq \cM$ and $\MWprime'=\overline{\nuWprime(\hXprime)}\subseteq \cM'$, the space of locally finite measures on the Borel subsets of $G\times H'$. \\
b) In view of Lemma~\ref{lem:quotientnu}c,
$(\nuWGprime\circ\widehat{\iota\Phi})(x+\LL)=\nuWGprime(\iota(\Phi(x))+\LL')=\nuWG(x+\LL)$ for all $x\in G\times H$. Hence, observing
Remark~\ref{remark:Haar-measure-factor}, 
$m_{\hXprime}\circ(\nuWGprime)^{-1}=m_\hX\circ\widehat{\iota\Phi}^{-1}\circ(\nuWGprime)^{-1}=m_\hX\circ (\nuWG)^{-1}$.
\end{proof}

\section{Proofs of Theorems~\ref{theo:inn-periodic}, \ref{theo:main-2-periodic} and \ref{theo:main-periodic}}\label{sec:proofs-II}

In this section, $W$ is again a compact window.
We begin with a technical lemma that will be used at several places.
\begin{lemma}\label{lemma:technical}
Let $\nu,\nu'\in\MW$ and $W_0\subseteq W$ be such that 
$\piG_*(\nu')=\piG_*(\nu)$ and
$\SH(\nu)=\SH(\nu')=W_0$. Then $\gamma(\nu')-\gamma(\nu)\in \pihX(\cH_{W_0})$.
\end{lemma}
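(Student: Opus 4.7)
The plan is to apply Lemma~\ref{lemma:shifted-versions} to turn the hypothesis $\piG_*(\nu)=\piG_*(\nu')$ into an explicit $H$-translation between $\nu$ and $\nu'$, and then to lift this translation to the torus parametrisation $\gamma=\hat\pi$. Implicit in the statement is that $W_0\neq\emptyset$, for otherwise $\nu=\nu'=\0$ and $\gamma$ is undefined; I will therefore assume $W_0\neq\emptyset$ throughout, so that $\nu,\nu'\in\MW\setminus\{\0\}$. Lemma~\ref{lemma:shifted-versions} then supplies $d\in H$ with $\nu'=\sigma_d\nu$ and $d+\SH(\nu')=\SH(\nu)$. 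Substituting $\SH(\nu)=\SH(\nu')=W_0$ collapses this to $d+W_0=W_0$, so that $d\in H_{W_0}$ and $(0,d)\in\cH_{W_0}$.

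The second step is to identify $\gamma(\nu')$ in terms of $\gamma(\nu)$, using the defining property of the torus parameter recalled after Proposition~\ref{prop:MWfactor}: $\gamma(\mu)\in\hX$ is the unique element with $\supp(\mu)\subseteq\supp(\nuW(\gamma(\mu)))$. Choose a representative $x\in G\times H$ with $\gamma(\nu)=x+\LL$, so that $\supp(\nu)\subseteq(x+\LL)\cap(G\times W)$. Translating by $(0,d)$ gives $\supp(\nu')\subseteq x+(0,d)+\LL$, while $\piH(\supp(\nu'))\subseteq W_0+d=W_0\subseteq W$. Hence $\supp(\nu')\subseteq(x+(0,d)+\LL)\cap(G\times W)=\supp(\nuW(x+(0,d)+\LL))$, and uniqueness of the torus parameter forces $\gamma(\nu')=x+(0,d)+\LL$. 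Subtracting yields $\gamma(\nu')-\gamma(\nu)=\pihX((0,d))\in\pihX(\cH_{W_0})$, as required.

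I do not anticipate a genuine obstacle beyond invoking Lemma~\ref{lemma:shifted-versions} correctly. The delicate point is that the shift $d$ must belong to the period group of $W_0$, not merely of $W$; this is exactly what the stronger hypothesis $\SH(\nu)=\SH(\nu')=W_0$ (rather than the weaker $\SH(\nu),\SH(\nu')\subseteq W$) provides through the identity $d+\SH(\nu')=\SH(\nu)$. Once this is in place, verifying the torus parameter identity is routine bookkeeping with support inclusions.
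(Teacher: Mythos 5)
Your proof is correct and follows essentially the same route as the paper's: invoke Lemma~\ref{lemma:shifted-versions} to produce $d$ with $\nu'=\sigma_d\nu$, observe that $\SH(\nu)=\SH(\nu')=W_0$ forces $d\in H_{W_0}$, and then translate a representative of $\gamma(\nu)$ by $(0,d)$ to identify $\gamma(\nu')$. The only cosmetic difference is that the paper reads off $\gamma(\nu')$ from the property that $\gamma(\mu)=x+\LL$ for any $x$ with $\mu\{x\}=1$, whereas you re-derive it from the uniqueness of the torus parameter via support inclusions; your explicit remark that $W_0\neq\emptyset$ is implicitly required (so that $\gamma$ is defined) is a sensible addition.
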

\begin{proof}
As $\piG_*(\nu)=\piG_*(\nu')$,  Lemma~\ref{lemma:shifted-versions} implies that
$\nu'=\sigma_d\nu$ for some $d\in H_{W_0}$. Recall that $\gamma(\nu)$ equals $x+\LL$ for any point $x\in G\times H$ with $\nu\{x\}=1$. But $\nu\{x\}=\nu'\{x+(0,d)\}$, so $\gamma(\nu')=x+(0,d)+\LL$. Hence $\gamma(\nu')-\gamma(\nu)=(0,d)+\LL\in\pihX(\cH_{W_0})$.
\end{proof}

Suppose that $\inn(W)\neq\emptyset$.
Denote by $\gamma$ the factor map from $\MW$ onto its maximal equicontinuous factor $\hX$
\footnote{This is the map $\pihX_*\circ(\piGH_*)^{-1}$ from \cite[Thm.~1a]{KR2015}.}, and by $\rho$ the factor map from $\hX$ onto $\hX/\pihX(\cH_{\inn(W)})$.
We define a factor map $\Gamma$ from $\MWG$ to $\hX/\pihX(\cH_{\inn(W)})$ as follows: for $\nuG\in\MWG$ pick any $\nu\in(\piG_*)^{-1}\{\nuG\}$ and let $\Gamma(\nuG)=\rho(\gamma(\nu))$.

\begin{lemma}\label{lem:factor}
Suppose that $\inn(W)\neq\emptyset$.
\begin{compactenum}[a)]
\item $\Gamma$ is well defined.
\item $\Gamma$ is continuous and commutes with the dynamics.
\item If $H_{\inn(W)}=H_{W}$ and $\Gamma(\nuG)=\rho(\hx)$ for some $\nuG\in\MWG$ and $\hx\in\CW$, then $\nuG=\piG_*(\nuW(\hx))$.
\item If $H_{\inn(W)}=H_W$, then $\Gamma$ is almost \oneone.
\end{compactenum}
\end{lemma}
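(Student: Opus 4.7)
For (a), given $\nu,\nu'\in(\piG_*)^{-1}\{\nuG\}$, Lemma~\ref{lemma:shifted-versions} produces $d\in H$ with $\nu'=\sigma_d\nu$; Lemma~\ref{lemma:SH-inv}d then gives $d+\inn(W)=\inn(W)$, i.e.\ $d\in H_{\inn(W)}$. A direct computation as in the proof of Lemma~\ref{lemma:technical} shows $\gamma(\nu')-\gamma(\nu)=(0,d)+\LL\in\pihX(\cH_{\inn(W)})$, so $\rho(\gamma(\nu))=\rho(\gamma(\nu'))$ and $\Gamma$ is well defined. For (b), equivariance is inherited from the equivariant maps $\gamma$, $\rho$, $\piG_*$, using that $\pihX(\cH_{\inn(W)})$ is a closed $G$-invariant subgroup so that the quotient action $\hTprime$ makes sense. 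Continuity of $\Gamma$ comes from the universal property of quotient maps: $\piG_*\colon\MW\to\MWG$ is a continuous surjection between compact Hausdorff spaces, hence a quotient map, and by (a) the continuous map $\rho\circ\gamma$ is constant on its fibres.

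The heart of the argument is (c), and I would first isolate the following auxiliary fact: $\gamma^{-1}(\hx)=\{\nuW(\hx)\}$ for every $\hx\in\CW$. Indeed, if $\nu\in\gamma^{-1}(\hx)$, then using $\MW=\overline{\nuW(\hX)}$ write $\nu=\lim_n\nuW(\hx_n)$; continuity of $\gamma$ forces $\hx_n\to\gamma(\nu)=\hx$, and then continuity of $\nuW$ at $\hx\in\CW$ yields $\nuW(\hx_n)\to\nuW(\hx)$, so $\nu=\nuW(\hx)$. Now let $\nuG\in\MWG$ satisfy $\Gamma(\nuG)=\rho(\hx)$ and pick any $\nu\in(\piG_*)^{-1}\{\nuG\}$. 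Then $\gamma(\nu)\in\rho^{-1}\{\rho(\hx)\}$, so $\gamma(\nu)=\hx+\pihX(0,h)$ for some $h\in H_{\inn(W)}=H_W$. Lemma~\ref{lem:aper}a together with compactness of $W$ gives $h+\partial W=\partial W$, and then the explicit description~\eqref{eq:CW-def} of $\CW$ shows that $\CW$ is invariant under the subgroup $\pihX(\cH_W)$; hence $\gamma(\nu)\in\CW$. The auxiliary fact then forces $\nu=\nuW(\gamma(\nu))=\sigma_h\nuW(\hx)$, the last equality being Lemma~\ref{lem:quotientnu}b. Projecting, $\nuG=\piG_*\nu=\piG_*(\sigma_h\nuW(\hx))=\piG_*(\nuW(\hx))$, as required.

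Finally, (d) follows from (c) by a Baire-category argument. Since $\Gamma$ is a continuous map between compact metric spaces, the fibre-diameter $\hx'\mapsto\diam(\Gamma^{-1}(\hx'))$ is upper semicontinuous, so the singleton-fibre set $\{\hx'\in\hXprime: |\Gamma^{-1}(\hx')|=1\}$ is a $G_\delta$ subset of $\hXprime$. By (c) this set contains $\rho(\CW)$, which is dense in $\hXprime$ because $\CW$ is dense in $\hX$ and $\rho$ is a continuous surjection. Hence the singleton-fibre set is a dense $G_\delta$ in $\hXprime$, so $\Gamma$ is almost~\oneone. The main obstacle I anticipate is the singleton-fibre identity $\gamma^{-1}(\hx)=\{\nuW(\hx)\}$ on $\CW$; once it is established, both (c) and (d) follow cleanly, and the remainder of the proof is routine bookkeeping with translations by $H_W$ and standard topological facts about quotient maps.
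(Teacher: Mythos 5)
Your proof is correct. Parts a)--c) follow essentially the paper's own route: well-definedness via Lemma~\ref{lemma:shifted-versions} and the computation $\gamma(\nu')-\gamma(\nu)=(0,d)+\LL\in\pihX(\cH_{\inn(W)})$; continuity via the quotient-map property of the continuous surjection $\piG_*$ between compacta (the paper instead verifies directly that $\Gamma^{-1}(D)=\piG_*\bigl(\gamma^{-1}(\rho^{-1}(D))\bigr)$ is compact for closed $D$ --- the two arguments are interchangeable); and for c) the invariance of $\CW$ under translation by $\cH_W$ together with the fact that $\gamma^{-1}(\hx)=\{\nuW(\hx)\}$ for $\hx\in\CW$, which the paper imports from \cite[Prop.~3.3b]{KR2015} and you reprove by an approximation argument using $\MW=\overline{\nuW(\hX)}$ and $\gamma\circ\nuW=\id_\hX$. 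The genuine divergence is in d). The paper shows that $\rho(\CW)$ itself is a dense $G_\delta$ in $\hXprime$, which requires observing that $\hX\setminus\CW$ is an $H_W$-invariant $F_\sigma$ (a countable union of compacta), so that its $\rho$-image is an $F_\sigma$ complementary to $\rho(\CW)$. You bypass this by noting that for the continuous surjection $\Gamma$ between compact metric spaces the fibre-diameter function $\hx'\mapsto\diam\bigl(\Gamma^{-1}(\hx')\bigr)$ is upper semicontinuous, so the full singleton-fibre set is a $G_\delta$, and it contains the dense set $\rho(\CW)$ by c). Your route is shorter and more general --- it shows that a dense set of singleton fibres of a continuous surjection of compact metric spaces is automatically residual, without needing $\rho(\CW)$ itself to be $G_\delta$ --- at the modest cost of writing out the standard compactness argument for the upper semicontinuity. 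Both approaches deliver the same conclusion.
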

\begin{proof}
a) 
Suppose that $\piG_*(\nu)=\piG_*(\nu')$. Then 
$\gamma(\nu')-\gamma(\nu)\in\pihX(\cH_{\inn(W)})$ by Lemma~\ref{lemma:technical}, so that $\rho(\gamma(\nu'))=\rho(\gamma(\nu)+\pihX(\cH_{\inn(W)}))=\rho(\gamma(\nu))$.
This shows that $\Gamma$ is well defined.\\
b) Let $D\subseteq \hX/\pihX(\cH_{\inn(W)})$ be closed. Then 
$E:=\gamma^{-1}(\rho^{-1}(D))$ is closed in $\MW$, and
\begin{equation}\label{eq:Gamma-def}
\nuG\in\Gamma^{-1}(D)
\;\Leftrightarrow\;
\exists\,\nu\in E:\ \piG_*(\nu)=\nuG
\;\Leftrightarrow\;
\nuG\in \piG_*(E)\ .
\end{equation}
As $E$ is also compact, this shows that $\Gamma^{-1}(D)=\piG_*(E)$ is closed. Hence $\Gamma$ is continuous. We show that it commutes with the dynamics: Let $\nuG\in\MWG$, $g\in G$, and denote $D:=\{\Gamma(\nuG)\}$ and $E:=\gamma^ {-1}(\rho^{-1}(D))$. Then $\nuG\in\Gamma^{-1}(D)=\piG_*(E)$ by \eqref{eq:Gamma-def}, and 
\begin{equation*}
S_g\circ\gamma^{-1}\circ\rho^{-1}=(\rho\circ\gamma\circ S_{-g})^{-1}=(\hT_{-g}\circ\rho\circ\gamma)^{-1}=\rho^{-1}\circ\gamma^{-1}\circ \hT_g\ .
\end{equation*}
Hence $S_g\nuG\in S_g(\piG_*(E))=\piG_*(S_g(\gamma^{-1}(\rho^{-1}(D))))=\piG_*(\gamma^{-1}(\rho^{-1}(\hT_g(D))))=\Gamma^{-1}(\hT_g(D))$, where we used again \eqref{eq:Gamma-def} for the last identity. Therefore, $\Gamma(S_g\nuG)\in\hT_g(D)=\{\hT_g(\Gamma(\nuG))\}$.\\
c) Let $h\in H_W$. Then $\hx\in\CW$ if and only if $\hx+(0,h)\in\CW$.  This follows immediately from
\begin{equation}\label{eq:CW-inv}
\begin{split}
\nuW(\hx+(0,h))=\sigma_h(\nuW(\hx))\ ,
\end{split}
\end{equation}
see Lemma~\ref{lem:quotientnu}b). 

Suppose now that $\hx\in\CW$ and $\Gamma(\nuG)=\rho(\hx)$ for some $\nuG\in\MWG$. There is $\nu\in\MW$ such that $\piG_*(\nu)=\nuG$ and $\rho(\gamma(\nu))=\rho(\hx)$. Hence $\gamma(\nu)\in\hx+\cH_{\inn(W)}=\hx+\cH_W$, i.e., there is $h\in H_W$ such that $\gamma(\nu)=\hx+(0,h)\in\CW$. This implies $\nu=\nuW(\hx+(0,h))$, see \cite[Prop.~3.3b]{KR2015}. Hence
\begin{equation*}
\nuG
=
\piG_*(\nu)
=
\piG_*(\nuW(\hx+(0,h)))
=
\piG_*(\nuW(\hx)+(0,h))
=
\piG_*(\nuW(\hx))\ ,
\end{equation*}
where we used \eqref{eq:CW-inv} for the third identity.\\
d) In view of assertion c), $\Gamma^{-1} \{\rho(\hx)\}$ is a singleton for each $\rho(\hx)\in\rho(\CW)$. For countable acting groups $G$ it is well known that this implies that $\Gamma$ is almost \oneone. For uncountable groups we could not locate such a statement in the literature. So we provide a proof for the convenience of the reader:
In view of c) we only need to show that $\rho(\CW)$ is a dense $G_\delta$-set in 
$\hX/\pihX(\cH_{\inn(W)})$:
$\CW$ is a dense $G_\delta$-set in $\hX$ by \cite[Prop.~3.3c]{KR2015}.
Hence denseness of $\rho(\CW)$ follows as $\rho$ is continuous and onto.
In \eqref{eq:CW-inv} we argued that
$\CW$ - and hence also $\hX\setminus \CW$ - are invariant under translations by elements from the subgroup $H_W$.
Hence
$\rho(\hX\setminus \CW)=(\hX/\pihX(\cH_{\inn(W)}))\setminus\rho(\CW)$.
Indeed, suppose $\rho(\hx)=\rho(\hy)$ for some $\hx\in \CW$ and $\hy\in\hX \setminus\CW$. Then $\hy\in \hx+\cH_{\inn(W)}=\hx+\cH_W\subseteq \CW$, a contradiction.
To show that $\rho(\CW)$ is a $G_\delta$-set, it now suffices to show that $\rho(\hX\setminus\CW)$ is an $F_\sigma$-set. But this is obvious since $\hX\setminus\CW$ is an $F_\sigma$-set by Eqn.~\eqref{eq:CW-def} and hence a countable union of compact sets, and since $\rho$ is continuous.

\end{proof}

\begin{proof}[Proof of Theorem~\ref{theo:inn-periodic}]
a) Let $M$ be any non-empty, closed $S$-invariant subset of $\MWG$.
$(\hXprime,\hTprime)$ is a factor of $(M,S)$ by Lemma~\ref{lem:factor}. We prove that it is the maximal equicontinuous factor of $(M,S)$.

Let $W_0:=\overline{\inn(W)}$ and 
$H_0:=H_{W_0}=H_{\inn(W)}$ (see Lemma~\ref{lem:aper} for the second identity).
Then $W'=\varphi_{W_0}(W_0)$ is an aperiodic subset of $H'=H/H_{W_0}$ 
by Lemma~\ref{lemma:Wprime-aperiodic}.
As $W_0$ is topologically regular, also $W'$ is topologically regular (Lemma~\ref{lemma:Wprime}). Hence also $\inn(W')$ is aperiodic.

Let $\hXprime=\hX/\pihX(\cH_{\inn(W)})$ with induced $G$-action $\hTprime$, and recall from Corollary~\ref{coro:iso-quotients} that $\hXprime$ is isomorphic to $(G\times H')/\LL'$.  Theorem~\ref{theo:inn-aperiodic}
implies that $(\MWGprime,S)$ is an almost \oneone extension of 
$(\hXprime,\hTprime)$.
As $\MWGprime=\piG_*(\MWprime)=\piG_*(\MWzero)=\MWGzero$ by Proposition~\ref{prop:same-images} (applied to $W_0$ instead of $W$), also $(\MWGzero,S)$ is an almost \oneone extension of $(\hXprime,\hTprime)$.
It follows that also the unique minimal subsystem $(\overline{\nuWGzero(\CWzero)},S)$ of 
$(\MWGzero,S)$ is an almost \oneone extension of 
$(\hXprime,\hTprime)$.

Note next that 
$\CW\subseteq\CWzero$ and $\nuWGzero|_{\CW}=\nuWG|_{\CW}$, because $\partial W_0\subseteq\partial W$. As $\CW$ is dense 
in $\hX$ (and as $\CWzero$ is the set of continuity points of $\nuWGzero$), this implies
$\overline{\nuWGzero(\CWzero)}=\overline{\nuWG(\CW)}=\MminG$. Hence the minimal system 
$(\MminG,S)$ 
is an almost \oneone extension of $(\hXprime,\hTprime)$, so that $(\hXprime,\hTprime)$ is the maximal equicontinuous factor of $(\MminG,S)$.

Suppose now that $(\tilde{X},\tilde{T})$ is an equicontinuous factor of $(M,S)$
and observe that $\MminG\subseteq M$. Then the restriction of the factor map to $\MminG$ defines a factor map from
$(\MminG,S)$ to $(\tilde{X},\tilde{T})$. It follows that $(\tilde{X},\tilde{T})$ is a factor of $(\hXprime,\hTprime)$. As this holds for any equicontinuous factor $(\tilde{X},\tilde{T})$ of $(M,S)$, the system $(\hXprime,\hTprime)$ is in fact the maximal equicontinuous factor of $(M,S)$.\\
b) This is Lemma~\ref{lem:factor}d.
\end{proof}

\begin{remark}
A proof of Theorem~\ref{theo:inn-periodic} only for the case when $H_{\inn(W)}=H_W$ is much simpler: In that case $H'=H/H_W$, and $W'=\varphi_W(W)$ is aperiodic by Lemma~\ref{lemma:Wprime-aperiodic}. As $\MWG=\MWGprime$ by Proposition~\ref{prop:same-images}a,
all assertions of Theorem~\ref{theo:inn-periodic} follow from Theorem~\ref{theo:inn-aperiodic} applied to the cut-and-project scheme $(G,H',\LL')$ with window~$W'$.
\end{remark}

\begin{proof}[Proof of Theorem~\ref{theo:main-2-periodic}]
Assume first that the window $W$ is Haar regular. Then $W=W_{reg}=W_{\QM}$ by Corollary~\ref{coro:WQM=Wreg}. Let $W'=\varphi_W(W)$. This set is Haar regular by Lemma~\ref{lemma:Wprime} and aperiodic by Lemma~\ref{lemma:Wprime-aperiodic}. Hence $W'$ is also Haar aperiodic, see Remark~\ref{remark:Haar-and-other-periods}.
As $(\MWG,\QMG,S)=(\MWGprime,\QMGprime,S)$ by Proposition~\ref{prop:same-images}, the claim of the theorem follows now from Theorem~\ref{theo:main-2}.

In the general case, note that $(\MW,\QM,S)$ is measure-theoretically isomorphic to $(\cM,\QM,S)$. As the present theorem applies to the Haar regularized window $W_{reg}$,
we must only show that $\QM=m_\hX\circ (\nuW)^{-1}$ equals $\QMreg=m_\hX\circ (\nuWreg)^{-1}$ on $\cM$. But this follows from the observation that
\begin{equation*}
\left\{\hx\in\hX:\nuW(\hx)\neq\nuWreg(\hx)\right\}
\subseteq
\pihX\left(\bigcup_{\ell\in\LL}\left((G\times(W\setminus W_{reg}))-\ell\right)\right),
\end{equation*}
and this is a set of $m_\hX$-measure zero, because $\LL$ is countable and $m_H(W\setminus W_{reg})=0$.
\end{proof}

\begin{proof}[Proof of Theorem~\ref{theo:main-periodic}]
Let $\PG$ be an ergodic  $S$-invariant probability measure on $\MWG\setminus\{\0\}$. Take any ergodic $S$-invariant probability measure $P$ on $\MW\setminus\{\0\}$ satisfying $\PG=P\circ(\piG_*)^{-1}$. In particular we have $W_P\ne\emptyset$.  Let $\hXprime=\hX/\pihX(\cH_{W_{P}})$ with induced $G$-action $\hTprime$ and Haar measure $m_{\hXprime}$. 
Denote
\begin{equation*}
\cA_P:=\left\{\nu\in\MW: \SH(\nu)=W_P\right\},
\end{equation*}
where $W_P\subseteq W$ is the Haar regular set from Lemma~\ref{lemma:SH-inv-ergod}, for which $\SH(\nu)=W_P$ for $P$-a.a.~$\nu$. Hence $P(\cA_P)=1$, and $\cA_P$ is Borel measurable and $S$-invariant, because $\SH$ is (Lemmas~\ref{lem:lsc} and~\ref{lemma:SH-inv}a). Then also $\piG_*(\cA_P)\subseteq\MWG\setminus\{\0\}$ is $S$-invariant, 
it is Borel measurable by Lemma~\ref{lemma:Borel-measurable-strong},
and $\PG(\piG_*(\cA_P))=P\left((\piG_*)^{-1}(\piG_*(\cA_P))\right)\geqslant P(\cA_P)=1$.

Denote by $\gamma$ the factor map from $\MW\setminus\{\0\}$ onto  $\hX$
\footnote{This is the map $\pihX_*\circ(\piGH_*)^{-1}$ from \cite[Thm.~1a]{KR2015}.\label{footnote:pi*}}, and by $\rho$ the factor map from $\hX$ onto $\hX/\pihX(\cH_{W_P})$.
We define now a map 
\begin{equation*}
\Gamma:\piG_*(\cA_P)\to\hX/\pihX(\cH_{W_P}),\quad
\left\{\Gamma(\nuG)\right\}:=(\rho\circ\gamma)\left((\piG_*)^{-1}\{\nuG\}\cap\cA_P\right).
\end{equation*}
In order to see that $\Gamma$ is well defined, observe first
the cardinality is at least $1$, because $\nuG\in\piG_*(\cA_P)$. 
On the other hand, if
$\nu,\nu'\in(\piG_*)^{-1}\{\nuG\}\cap\cA_P$, then $\gamma(\nu')-\gamma(\nu)\in\pihX(\cH_{W_P})$ by Lemma~\ref{lemma:technical}, i.e. $(\rho\circ\gamma)(\nu')=(\rho\circ\gamma)(\nu)$.

Next observe that $\Gamma$ commutes with the dynamics: For each $g\in G$,
\begin{equation*}
\begin{split}
\left\{\Gamma(S_g\nuG)\right\}
&=
(\rho\circ\gamma)\left((\piG_*)^{-1}(S_g\{\nuG\})\cap\cA_P\right)
=
(\rho\circ\gamma)\left(S_g\left((\piG_*)^{-1}\{\nuG\}\right)\cap S_g(\cA_P)\right)\\
&=
\widehat{T'_g}\left((\rho\circ\gamma)\left((\piG_*)^{-1}\{\nuG\}\cap\cA_P\right) \right)
=
\widehat{T'_g}\left\{\Gamma(\nuG)\right\}\\
&=
\left\{\widehat{T'_g}(\Gamma(\nuG))\right\}
\end{split}
\end{equation*}

It remains to show that $\piG_*(\cA_P)\cap\Gamma^{-1}(K')$ is Borel measurable for each closed subset $K'$ of $\hXprime$. Then it follows that $\Gamma$ is Borel measurable and $(\hXprime,m_{\hXprime},\hTprime)$ is a measure-theoretic factor of $(\MWG,\PG,S)$ as claimed in Theorem~\ref{theo:main-periodic}. 

So let $K:=(\rho\circ\gamma)^{-1}(K')\subseteq \MW$. Then $K$ is closed, and
\begin{equation}\label{eq:last-identity}
K\cap\cA_P=(\piG_*)^{-1}\left(\piG_*(K\cap\cA_P)\right)\cap\cA_P\ .
\end{equation}
The $\subseteq$-inclusion is trivial. To see the reverse inclusion, let $\nu\in\cA_P$ and assume that there exists $\nu'\in K\cap\cA_P$ such that $\piG_*(\nu)=\piG_*(\nu')$. Then $\gamma(\nu)-\gamma(\nu')\in\pihX(\cH_{W_P})$ by Lemma~\ref{lemma:technical}, so that 
$(\rho\circ\gamma)(\nu)\in(\rho\circ\gamma)(\nu')+\rho(\pihX(\cH_{W_P}))=(\rho\circ\gamma)(\nu')\in(\rho\circ\gamma)(K)\subseteq K'$, i.e.
$\nu\in(\rho\circ\gamma)^{-1}(K')=K$.

Now let $\nuG\in\piG_*(\cA_P)$. Then
\begin{equation*}
\begin{split}
\Gamma(\nuG)\in K'
&\Leftrightarrow
\left\{\Gamma(\nuG)\right\}\subseteq K'\\
&\Leftrightarrow
(\rho\circ\gamma)\left((\piG_*)^{-1}\{\nuG\}\cap \cA_P\right)\subseteq K'\\
&\Leftrightarrow
(\piG_*)^{-1}\{\nuG\}\cap \cA_P\subseteq (\rho\circ\gamma)^{-1}(K')= K\\
&\Leftrightarrow
(\piG_*)^{-1}\{\nuG\}\cap \cA_P\subseteq K\cap\cA_P\\
&\Leftrightarrow
\nuG\in\piG_*(K\cap\cA_P)\ .
\end{split}
\end{equation*}
The last equivalence is seen as follows:\\ ,,$\Rightarrow$'':
As $\nuG\in \piG_*(\cA_P)$, there exists some $\nu\in(\piG_*)^{-1}\{\nuG\}\cap\cA_P\subseteq F\cap\cA_P$, so that
$\nuG=\piG_*(\nu)\in\piG_*(F\cap\cA_P)$.\\
,,$\Leftarrow$'': Let $\nu\in(\piG_*)^{-1}\{\nuG\}\cap\cA_P$. Then $\piG_*(\nu)=\nuG\in\piG_*(F\cap\cA_P)$, so that in view of \eqref{eq:last-identity},
\begin{equation*}
\nu\in(\piG_*)^{-1}\left(\piG_*(F\cap\cA_P)\right)\cap\cA_P
=
F\cap\cA_P\ .
\end{equation*}

Hence $\piG_*(\cA_P)\cap\Gamma^ {-1}(K')=\piG_*(K\cap\cA_P)$, and this set is Borel measurable by Lemma~\ref{lemma:Borel-measurable-strong}.
\end{proof}

\section{Relatively compact windows}\label{sec:rcwin}

For suitable relatively compact windows $W\subseteq H$, dynamical properties are the same as in the compact case. This has already been observed in \cite[Rem.~3.16]{KR2015}. 

\subsection{Topological results}

For some topological results, we assume that the boundary $\partial W$ is nowhere dense. This condition characterises denseness  of the set $\CW$ of continuity points of the map $\nuW:\MW\to\hX$, see Proposition~\ref{lem:dGd}. As any compact window has nowhere dense boundary, this condition generalises the compact case. 
The next lemma shows that this condition also generalises topological regularity.
\begin{lemma}\label{lemma:bdW-nowhere-dense}
For each $W\subseteq H$,  $\partial W$ is nowhere dense if and only if 
$\inn(\overline{\inn(W)})=\inn(\overline{W})$.
\end{lemma}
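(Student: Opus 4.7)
The plan is to unwind both conditions into statements about $\inn(W)$, $\overline{W}$ and $\partial W=\overline{W}\setminus\inn(W)$, and show each direction directly. First note the trivial inclusion $\inn(\overline{\inn(W)})\subseteq\inn(\overline{W})$, which follows from $\overline{\inn(W)}\subseteq\overline{W}$. So the equality in the lemma reduces to the reverse inclusion $\inn(\overline{W})\subseteq\inn(\overline{\inn(W)})$. Also, since $\partial W$ is automatically closed, ``$\partial W$ is nowhere dense'' is equivalent to $\inn(\partial W)=\emptyset$.

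For the implication ($\Rightarrow$), assume $\inn(\partial W)=\emptyset$ and pick $x\in\inn(\overline{W})$. Choose an open neighborhood $U\ni x$ with $U\subseteq\overline{W}$. Then $U\cap W^c\subseteq\overline{W}\setminus W\subseteq\overline{W}\setminus\inn(W)=\partial W$. Since $U\cap W^c$ is open and $\partial W$ has empty interior, $U\cap W^c=\emptyset$, i.e.\ $U\subseteq W$. As $U$ is open, $U\subseteq\inn(W)$, so $x\in\inn(W)\subseteq\inn(\overline{\inn(W)})$, which is the desired inclusion.

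For ($\Leftarrow$), assume $\inn(\overline{\inn(W)})=\inn(\overline{W})$ and suppose, for contradiction, that $U\subseteq\partial W$ is a nonempty open set. Since $\partial W\subseteq\overline{W}$, we have $U\subseteq\inn(\overline{W})$, which by hypothesis equals $\inn(\overline{\inn(W)})\subseteq\overline{\inn(W)}$. Hence every point of $U$ lies in the closure of $\inn(W)$, so $U\cap\inn(W)\ne\emptyset$. But $U\subseteq\partial W$ is disjoint from $\inn(W)$, a contradiction. Therefore $\inn(\partial W)=\emptyset$.

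There is no serious obstacle here; the only thing to be careful about is keeping the two layers of interior/closure straight, and in particular noting that $\overline{W}\setminus W\subseteq\partial W$ (used in the forward direction) and that the inclusion $\inn(\overline{\inn(W)})\subseteq\overline{\inn(W)}$ is what lets us convert the interior-equality into a density statement for $\inn(W)$ inside $U$ (used in the backward direction).
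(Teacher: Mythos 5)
Your reduction to the single inclusion $\inn(\overline{W})\subseteq\inn(\overline{\inn(W)})$ and your backward direction are both correct. The forward direction, however, contains a genuine gap: you assert that $U\cap W^c$ is open, but the lemma is stated for an arbitrary subset $W\subseteq H$, so $W^c$ need not be open and neither need $U\cap W^c$; openness is exactly what you need to conclude $U\cap W^c=\emptyset$ from $U\cap W^c\subseteq\partial W$ and $\inn(\partial W)=\emptyset$. Moreover, the conclusion you extract from this step --- $U\subseteq W$, hence $x\in\inn(W)$, i.e.\ $\inn(\overline{W})\subseteq\inn(W)$ --- is actually false in general even when $\partial W$ is nowhere dense. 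Take $H=\R$ and $W=[0,1]\setminus\{1/2\}$: then $\partial W=\{0,\tfrac12,1\}$ is nowhere dense and $\tfrac12\in\inn(\overline{W})=(0,1)$, but $\tfrac12\notin\inn(W)$. For any open $U$ with $\tfrac12\in U\subseteq(0,1)$ one gets $U\cap W^c=\{1/2\}$, a nonempty set with empty interior contained in $\partial W$ --- no contradiction. So this is not a presentational slip; the argument as written proves a false statement.

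The repair is small: instead of intersecting $U$ with $W^c$, remove the \emph{closed} set $\overline{\inn(W)}$. The set $U\setminus\overline{\inn(W)}$ is open and satisfies $U\setminus\overline{\inn(W)}\subseteq\overline{W}\setminus\inn(W)=\partial W$, hence it is empty when $\partial W$ is nowhere dense. This gives $U\subseteq\overline{\inn(W)}$ and therefore $x\in\inn(\overline{\inn(W)})$, which is precisely the inclusion you need (and is all that is true). For comparison, the paper's proof avoids the pointwise argument entirely: it writes $\partial W=\overline{W}\cap\overline{W^c}$, uses $\inn(A\cap B)=\inn(A)\cap\inn(B)$ together with $(\inn(\overline{W^c}))^c=\overline{\inn(W)}$ to see that nowhere denseness of $\partial W$ is equivalent to $\inn(\overline{W})\subseteq\overline{\inn(W)}$, and then passes to interiors. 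Your backward direction is essentially the same density observation, phrased by contradiction.
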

\begin{proof}
As $\partial W=\overline{W}\cap\overline{W^c}$, we have $\inn(\partial W)=\inn(\overline{W})\cap\inn(\overline{W^c})$. Hence $\partial W$ is nowhere dense if and only if $\inn(\overline{W})\subseteq(\inn(\overline{W^c}))^c=\overline{\inn(W)}$. But this is obviously equivalent to 
$\inn(\overline{\inn(W)})=\inn(\overline{W})$.
\end{proof}

The next lemma generalises slightly Lemma 6.1 in~\cite{KR2015}.

\begin{lemma}\label{lem:charcw}
Let $W\subseteq H$ be relatively compact. Then the set $\CW\subseteq \hX$ of continuity points of the map $\nuW:\hX\to \MW$ is given by
\begin{displaymath}
\CW=\pihX\left(\bigcap_{\ell\in\LL}\left((G\times(\partial W)^c)-\ell\right)\right) \ .
\end{displaymath}
\end{lemma}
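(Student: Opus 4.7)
The plan is to identify the continuity points $\hx=x+\LL$ of $\nuW$ with exactly those $x$ for which no lattice translate $(x+\ell)_H$ lies in the topological boundary $\partial W$, which is precisely the right hand side. The argument is a local pointwise analysis and does not appeal to upper semicontinuity of $\nuW$ (which would require compactness of $W$); only relative compactness enters, and only through compactness of $\overline W$, which is what ensures that each $\nuW(\hx)$ is locally finite and each $\{\ell \in \LL : x + \ell \in K\}$ is finite for $K$ compact. This mirrors the proof of Lemma~6.1 in \cite{KR2015}.

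For the $\supseteq$ inclusion I assume $(x+\ell)_H\notin\partial W$ for every $\ell\in\LL$ and verify $\nuW(\hx_n)\to\nuW(\hx)$ vaguely whenever $\hx_n\to\hx$. After lifting to representatives $x_n\to x$ in $G\times H$ and fixing a continuous compactly supported test function $f$ with support $K$, I enlarge $K$ to a compact $K'$ containing a neighborhood of $K$, so that $x_n+\ell\in K$ forces $x+\ell\in K'$ for all sufficiently large $n$. Discreteness of $\LL$ makes $\Lambda:=\{\ell\in\LL:x+\ell\in K'\}$ finite. For each $\ell\in\Lambda$ the hypothesis leaves only two mutually exclusive cases. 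Either $(x+\ell)_H\in\inn(W)$, in which case $(x_n+\ell)_H\in\inn(W)\subseteq W$ eventually by openness of $\inn(W)$, and $f(x_n+\ell)\to f(x+\ell)$ by continuity; or $(x+\ell)_H\in H\setminus\overline W$, in which case $(x_n+\ell)_H\notin\overline W\supseteq W$ eventually, so no contribution arises on either side. Summing the termwise convergences over the finite set $\Lambda$ yields $\int f\,d\nuW(\hx_n)\to\int f\,d\nuW(\hx)$.

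For the reverse inclusion I argue the contrapositive: if some $\ell\in\LL$ satisfies $h_0:=(x+\ell)_H\in\partial W=\overline W\cap\overline{W^c}$, then $\nuW$ is discontinuous at $\hx$. I pick a sequence $h_n\to h_0$ lying entirely in $W^c$ if $h_0\in W$, and entirely in $W$ if $h_0\notin W$, and set $x_n:=x+(0,h_n-h_0)$, so $\hx_n:=x_n+\LL\to\hx$. Then I fix a continuous compactly supported bump $f$ concentrated in a neighborhood of $x+\ell$ small enough to avoid every other lattice translate (by discreteness of $\LL$) and normalized to $f(x+\ell)=1$. A direct computation shows that precisely one of $\int f\,d\nuW(\hx)$ and $\lim_n\int f\,d\nuW(\hx_n)$ equals $0$ while the other equals $1$, so $\nuW(\hx_n)\not\to\nuW(\hx)$. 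Well-definedness on $\hX$ is automatic, since the right hand side set is manifestly $\LL$-invariant under the shift $x\mapsto x+\ell_0$.

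The main delicacy is the bookkeeping in the $\supseteq$ direction: one has to enlarge $\supp f$ to $K'$ to capture every lattice translate that could contribute along a perturbed sequence, and then combine discreteness of $\LL$ with continuity of $f$ to reduce vague convergence to a finite sum of termwise limits. Everything else, including both perturbation constructions in the $\subseteq$ direction, is routine.
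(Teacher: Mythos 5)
Your proof is correct and follows essentially the same route as the paper's: the same pointwise case analysis ($(x+\ell)_H\in\inn(W)$ versus $(x+\ell)_H\in(\overline W)^c$) for the continuity direction, and the same one-sided perturbation $h_n\to h_0$ from the opposite side of $\partial W$ for the discontinuity direction, with discreteness of $\LL$ isolating the relevant lattice translate. The only difference is that you test vague convergence against compactly supported continuous functions (with the $K\subseteq K'$ enlargement), whereas the paper evaluates the measures on small open neighbourhoods of $x+\ell$; this is a cosmetic reformulation of the same argument.
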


\begin{proof}
This is seen by reinspecting the proof of \cite[Lem.~6.1]{KR2015}. If $x_H\not\in\bigcap_{\ell\in \LL}((\partial W)^c-\ell_H)$, then there is $\ell\in \LL$ such that $x_H+\ell_H\in\partial W$. In the case $x_H+\ell_H\in W$ we take $x^n_H\in (W)^c-\ell_H$ such that $x_H^n\to x_H$ as $n\to\infty$ and let $x^n=(x_G,x_H^n)$. Then, for each sufficiently small open neighbourhood $U$ of $x+\ell$ in $G\times H$ and sufficiently large $n$ we have $\nuW(x^n+\LL)(U)=1_{W}(x_H^n+\ell_H)\cdot\delta_{x_n+\ell}(U)=0$ while 
$\nuW(x+\LL)(U)=1_{W}(x_H+\ell_H)\cdot\delta_{x+\ell}(U)=1$ so that $\nuW(x^n+\LL)\not\to\nuW(x+\LL)$. In particular, $(x+\LL)\not\in \CW$. If $x_H+\ell_H\in (W)^c$, we may argue analogously with a sequence $x^n_H\in W-\ell_H$ such that $x_H^n\to x_H$ as $n\to\infty$. 

Conversely, let $x+\LL,x^n+\LL\in\hX$, $\lim_{n\to\infty}(x^n+\LL)=(x+\LL)$ and take a sufficiently small open neighbourhood $U$ of $x+\ell$ in $G\times H$. Assume that $x_H\in\bigcap_{\ell\in \LL}((\partial W)^c-\ell_H)$, i.e., $(x+\ell)_H\not\in\partial W$ for all $\ell\in \LL$. Now consider $\ell\in\LL$ such that $x_H+\ell\in W$. Then by assumption $x_H+\ell_H\in \inn(W)$, which implies  $1=\nuW(x+\LL)(U)=\nuW(x^n+\LL)(U)$ for sufficiently large $n$. If $\ell\in\LL$ such that $x_H+\ell_H\notin W$, then by assumption $x_H+\ell\in (\overline{W})^c$, which implies  $0=\nuW(x+\LL)(U)=\nuW(x^n+\LL)(U)$ for sufficiently large $n$. 
This implies $\lim_{n\to\infty}\nuW(x^n+\LL)=\nuW(x+\LL)$ and hence $x+\LL\in \CW$.
\end{proof}

The following proposition collects some further properties of the set $\CW$.
\begin{proposition}\label{lem:dGd}
Let $W\subseteq H$ be relatively compact. Then the set $\CW\subseteq \hX$ of continuity points of the map $\nuW:\hX\to \MW$ is a $G_\delta$-set.  The set $\CW$ is dense in $\hX$ if and only if $\partial W$ is nowhere dense in $H$.
Otherwise $\CW=\emptyset$.
\end{proposition}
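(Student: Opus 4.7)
\textbf{Proof plan for Proposition~\ref{lem:dGd}.}

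For the first claim, I would rely on the classical fact that for any map between metric spaces the set of points of continuity is a $G_\delta$ set. Since both $\hX$ and $\MW$ are Polish (they are metrisable by the assumptions in \S\ref{assnot} and \S\ref{en:ass}), this is immediate for $\CW$. Alternatively, and more in line with the algebraic setup, one can argue directly from Lemma~\ref{lem:charcw}: because $W$ is relatively compact, $\partial W\subseteq\overline{W}$ is closed (in fact compact), so each set $(G\times(\partial W)^c)-\ell$ is open in $G\times H$; as $\LL$ is countable (Fact~\ref{en:ass}(2)), the intersection $\bigcap_{\ell\in\LL}((G\times(\partial W)^c)-\ell)$ is a $G_\delta$-set. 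Since this preimage set is $\LL$-invariant and $\pihX$ is continuous, open, and surjective, its image $\CW$ is a $G_\delta$-subset of $\hX$.

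For the remaining two statements, the key observation is that $(G\times (\partial W)^c)-\ell=G\times((\partial W)^c-\piH(\ell))$, so Lemma~\ref{lem:charcw} rewrites as
\begin{equation*}
\CW=\pihX\left(G\times\Big(H\setminus\bigcup_{\ell\in\LL}(\partial W-\piH(\ell))\Big)\right).
\end{equation*}
Assume first that $\partial W$ is nowhere dense. As $\partial W$ is closed, each translate $\partial W-\piH(\ell)$ is closed and nowhere dense. The locally compact Hausdorff group $H$ is a Baire space, so the countable union $\bigcup_{\ell\in\LL}(\partial W-\piH(\ell))$ is meager and its complement is dense in $H$. Consequently the set inside $\pihX(\cdot)$ above is dense in $G\times H$, and since continuous open surjections preserve density, $\CW$ is dense in $\hX$.

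Now suppose $\partial W$ is not nowhere dense, so that $\inn(\partial W)\ne\emptyset$. Pick a non-empty open $U\subseteq \partial W$. For any $h\in H$ the set $U-h$ is a non-empty open subset of $H$, and since $\piH(\LL)$ is dense in $H$ by assumption, there exists $\ell\in\LL$ with $\piH(\ell)\in U-h$, i.e.\ $h\in U-\piH(\ell)\subseteq\partial W-\piH(\ell)$. Hence $\bigcup_{\ell\in\LL}(\partial W-\piH(\ell))=H$, and the displayed formula gives $\CW=\pihX(G\times\emptyset)=\emptyset$. The only mild obstacle I foresee is bookkeeping around the fact that the quotient map $\pihX$ is continuous, open, and surjective (so it preserves both $G_\delta$-ness of $\LL$-invariant sets and density), but this follows directly from the standing assumptions on $\LL$.
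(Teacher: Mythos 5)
Your treatment of the density dichotomy (Baire category when $\partial W$ is nowhere dense, and the denseness of $\piH(\LL)$ forcing $\bigcup_{\ell\in\LL}(\partial W-\ell_H)=H$ otherwise) coincides with the paper's proof essentially line by line, and is correct; note only that you should justify why $\partial W$ is closed for a merely relatively compact $W$ (it is: $\partial W\subseteq\overline{W}$ is compact, since boundaries are always closed and here contained in a compact set). For the $G_\delta$ claim your \emph{primary} argument -- the continuity set of any map into a metric space is a $G_\delta$, applied to $\nuW:\hX\to\MW$ with $\MW$ Polish by Fact~2.2(4) -- is valid and in fact shorter and more robust than the paper's argument; the paper instead works with the explicit description from Lemma~\ref{lem:charcw}, passes to the complement $B=\bigcup_{\ell\in\LL}G\times(\partial W-\ell_H)$, and uses $\sigma$-compactness of $G$ to write $\pihX(B)$ as a countable union of compact (hence closed) sets, so that $\CW=\hX\setminus\pihX(B)$ is $G_\delta$. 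What the paper's route buys is an argument that stays entirely inside the cut-and-project formalism and simultaneously establishes the $\LL$-invariance bookkeeping ($\pihX(B)=\hX\setminus\pihX(A)$) needed for the other two claims.

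Be warned, however, that your \emph{alternative} argument for the $G_\delta$ property is exactly the fallacious step that this paper's own footnote flags in \cite[Prop.~3.3c]{KR2015}. Openness of $\pihX$ gives $\pihX(U)$ open for each open $U$, but it does \emph{not} give $\pihX\bigl(\bigcap_{\ell}U_\ell\bigr)=\bigcap_{\ell}\pihX(U_\ell)$: a point of the right-hand side may have different preimages witnessing membership in the different $U_\ell$, and the $\LL$-invariance of the total intersection does not repair this because the individual sets $U_\ell=(G\times(\partial W)^c)-\ell$ are not themselves $\LL$-invariant. The correct way to push the $G_\delta$ property through the quotient is the paper's: use that $A$ is $\LL$-invariant to get $\pihX(A)=\hX\setminus\pihX\bigl((G\times H)\setminus A\bigr)$, and then show that the image of the complement is $F_\sigma$ by exhibiting it as a countable union of \emph{compact} sets (here one needs $\sigma$-compactness of $G$ and compactness of $\partial W$, not just closedness). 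Since your first argument already settles the claim, this does not invalidate your proof, but the sentence ``$\pihX$ preserves $G_\delta$-ness of $\LL$-invariant sets'' as stated is not a consequence of openness alone and should not be left as the load-bearing step.
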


\begin{proof}\footnote{For compact windows this was claimed in \cite[Prop.~3.3c]{KR2015}. As the proof of that proposition contains a mistake, we provide a full proof for the more general case treated here. (Indeed, the argument given in footnote $\dagger$ of the proof of that proposition is wrong, because, with the sets $U_\ell$ and $X$ defined there, it is not true that $\pihX(X\cap U_\ell)=\pihX(U_\ell)$.)}
Let $A:=\bigcap_{\ell\in\LL}\left((G\times(\partial W)^c)-\ell\right)$ and $B:=(G\times H)\setminus A=\bigcup_{\ell\in\LL}G\times(\partial W-\ell_H)$,
and recall that $\hX=(G\times H)/\LL$.
As both sets, $A$ and $B$, are invariant under translations by elements from the lattice $\LL$, 
$\pihX(B)=\hX\setminus\pihX(A)$.
Indeed, suppose $\pihX(a)=\pihX(b)$ for some $a\in A$ and $b\in B$. Then $b\in a+\LL\subseteq A$, a contradiction.
To show that $\pihX(A)$ is a $G_\delta$-set, it suffices to show that $\pihX(B)$ is a $F_\sigma$-set. To that end recall that $G$ is $\sigma$-compact, i.e. there are compact $K_1,K_2,\ldots\subseteq G$ such that $G=\bigcup_{j\in\mathbbm N}K_j$. Hence 
$\pihX(B)=\bigcup_{\ell\in\LL}\bigcup_{j\in\mathbbm N}\pihX\left(K_j\times(\partial W-\ell_H)\right)$ is a countable union of compact sets, hence $F_\sigma$.

Now assume that $\partial W$ is nowhere dense. Then $A_H:=\bigcap_{\ell\in\LL}\left((\partial W)^c-\ell_H\right)$ is dense in $H$ as $\partial W$ is nowhere dense and as  $H$ is a Baire space. This readily implies that
$A=(\piH)^{-1}(A_H)=G\times A_H$ is dense in $G\times H$. Now denseness of $\pihX(A)=\CW$ in $\hX$ follows as $\pihX$ is continuous and onto. 

Conversely, assume that $\partial W$ is not nowhere dense. Then there is some nonempty open set $O\subseteq \partial W$, which implies
$A_H^c\supseteq\bigcup_{\ell\in\LL}(O-\ell_H)=H$ because of the denseness of $\piH(\LL)$ in $H$. Thus
$A_H=\emptyset$ and $\CW=\pihX((\piH)^{-1}(A_H))=\emptyset$, in particular $\CW$ is not dense.
\end{proof}

Let us denote by $\GnuW:=\{(\hx, \nuW(\hx)):\hx\in\hX\}\subseteq \hX\times \MW$ the graph of the map $\nuW$, and by $\GMW$ its closure in the vague topology. Likewise, denote by $\cG(\nuW|_{\CW})$ the restriction of $\GnuW$ to its continuity points. We have the following general result on minimal subsets of $\GMW, \MW$ and $\MWG$.
\begin{lemma}\label{lemma:minimal-subset}
Let $W\subseteq H$ be relatively compact. Then
\begin{compactenum}[a)]
\item 
The set $\overline{\cG(\nuW|_{\CW})}$ is the unique minimal subset of $\GMW$. 
\item The set $\overline{\nuW(\CW)}$ is the unique minimal subset of $\MW$.
\item The set $\overline{\nuWG(\CW)}$ is the unique minimal subset of $\MWG$.
\item If $\inn(W)=\emptyset$, then $\overline{\nuW(\CW)}$ and $\overline{\nuWG(\CW)}$ are singletons consisting of the zero measure only.
\end{compactenum}
\end{lemma}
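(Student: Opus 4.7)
The plan is to exploit a rigidity property of points of $\GMW$ whose first coordinate lies in $\CW$. I work under the standing assumption that $\partial W$ is nowhere dense, so that $\CW$ is a dense $G_\delta$-subset of $\hX$ by Proposition~\ref{lem:dGd} (otherwise $\CW=\emptyset$ and the statements are vacuous). Since $\nuW\circ\hT_g=S_g\circ\nuW$ and $S_g$ is a homeomorphism of $\MW$, the set $\CW$ is $G$-invariant, and so is $\cG(\nuW|_{\CW})$. The key observation is: if $(\hy,\nu)\in\GMW$ with $\hy\in\CW$, then $\nu=\nuW(\hy)$. Indeed, by definition of $\GMW$ there exists a sequence $\hy_n\in\hX$ with $\hy_n\to\hy$ and $\nuW(\hy_n)\to\nu$, and continuity of $\nuW$ at $\hy$ forces $\nuW(\hy_n)\to\nuW(\hy)$.

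For part (a), the set $\overline{\cG(\nuW|_{\CW})}$ is closed and $G$-invariant. To show it is minimal, fix $(\hx,\mu)$ in it and any $\hy\in\CW$. Minimality of $(\hX,\hT)$ provides $g_n\in G$ with $\hT_{g_n}\hx\to\hy$, and by compactness of $\MW$ one passes to a subsequence along which $S_{g_n}\mu\to\nu'$. Then $(\hy,\nu')\in\ocl{(\hx,\mu)}\subseteq\GMW$, and the key observation forces $\nu'=\nuW(\hy)$. Hence $\cG(\nuW|_{\CW})\subseteq\ocl{(\hx,\mu)}$, which yields $\ocl{(\hx,\mu)}=\overline{\cG(\nuW|_{\CW})}$. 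For uniqueness, let $\mathcal{M}\subseteq\GMW$ be any minimal set. Its image under the first-coordinate projection is a non-empty compact invariant subset of $\hX$, hence equals $\hX$; so $\mathcal{M}$ contains some $(\hy,\mu)$ with $\hy\in\CW$, and the key observation gives $(\hy,\nuW(\hy))\in\mathcal{M}\cap\overline{\cG(\nuW|_{\CW})}$. Since distinct minimal sets are disjoint, $\mathcal{M}=\overline{\cG(\nuW|_{\CW})}$.

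Parts (b) and (c) follow by pushing part (a) forward along the continuous, $G$-equivariant surjections $p_2:\GMW\to\MW$ (second coordinate) and $\piG_*\circ p_2:\GMW\to\MWG$ (the latter surjective because $\MWG=\piG_*(\MW)$ by compactness of $\MW$). Images of minimal sets are minimal, so $\overline{\nuW(\CW)}=p_2(\overline{\cG(\nuW|_{\CW})})$ and $\overline{\nuWG(\CW)}=(\piG_*\circ p_2)(\overline{\cG(\nuW|_{\CW})})$ are minimal subsets of $\MW$ and $\MWG$ respectively. For uniqueness, any minimal $\mathcal{N}\subseteq\MW$ satisfies $p_2^{-1}(\mathcal{N})\cap\GMW\neq\emptyset$, so this preimage contains a minimal subset of $\GMW$, which by part (a) equals $\overline{\cG(\nuW|_{\CW})}$; thus $\overline{\nuW(\CW)}\subseteq\mathcal{N}$, and minimality of $\mathcal{N}$ forces equality. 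The same argument with $\piG_*\circ p_2$ in place of $p_2$ handles $\MWG$. For part (d), Lemma~\ref{lem:charcw} shows that $\hx=x+\LL\in\CW$ implies $x_H+\ell_H\notin\partial W$ for every $\ell\in\LL$; if $\inn(W)=\emptyset$ then $W\subseteq\overline{W}=\partial W$, so $(x+\LL)\cap(G\times W)=\emptyset$ and $\nuW(\hx)=\0$, giving $\overline{\nuW(\CW)}=\{\0\}$ and hence $\overline{\nuWG(\CW)}=\{\0\}$.

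The principal obstacle is the key observation itself: it pins down the fibre of $\GMW$ above every point of $\CW$ and thereby converts minimality of $(\hX,\hT)$ into minimality of the graph closure. Once this is in place, the remaining arguments reduce to standard bookkeeping with continuous factor maps of minimal systems.
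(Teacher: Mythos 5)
Your proof is correct and takes essentially the same route as the paper's: the decisive point in both is that the fibre of $\GMW$ over a continuity point $\hy\in\CW$ is the singleton $\{(\hy,\nuW(\hy))\}$, which together with minimality of $(\hX,\hT)$ forces every non-empty closed invariant subset of $\GMW$ to contain $\overline{\cG(\nuW|_{\CW})}$, and parts b)--d) then follow by projecting, exactly as in the paper. The only slightly off note is your remark that for $\CW=\emptyset$ the statements are \emph{vacuous} --- they would in fact be false (a compact system always has a non-empty minimal set), so the lemma must be read with $\partial W$ nowhere dense, a hypothesis the paper's own proof also uses tacitly.
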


\begin{proof}
This can be seen by re-inspecting the proofs in \cite{KR2015}, but we give a simple direct argument for the ease of the reader. \\
a) Let $\emptyset \ne A\subseteq \GMW$ be any closed invariant set. Then $\emptyset \ne \pihX(A)\subseteq \hX$ is closed  invariant. Hence $\pihX(A)=\hX\supseteq \CW$, since $(\hX,\hat T)$ is minimal. 
As $(\pihX)^{-1}\{\hx\}\cap \GMW=\{(\hx,\nuW(\hx))\}$ for each $\hx\in \CW$, this implies $A\supseteq \cG(\nuW|_{\CW})$, which means
 $A\supseteq \overline{\cG(\nuW|_{\CW})}=:A_{min}$.\\
b) Let $\emptyset\ne B\subseteq \MW$ be any closed invariant set. Then $\emptyset \ne (\piGH_*)^{-1}(B)\subseteq  \GMW$ is closed invariant. By the previous result, we infer $(\piGH_*)^{-1}(B)\supseteq A_{min}$. 
Hence $B\supseteq \piGH_*(A_{min})\supseteq{\nuW(\CW)}$, so that
$\overline{\nuW(\CW)}\subseteq\overline{B}= B$.
\\
c) This follows using the same argument as in b).  \\
d) If $\inn(W)=\emptyset$, then $(\partial W)^c\subseteq (W)^c$. Hence $\hx=x+\LL\in \CW$ implies  $x_H\in \bigcap_{\ell\in\LL}\left((\partial W)^c-\ell_H\right)\subseteq   \bigcap_{\ell\in\LL}\left((W)^c-\ell_H\right)$. But the latter condition means $(x+\LL)\cap(G\times W)=\emptyset$. Hence the claim follows.
\end{proof}

Consider now the window $\overline{W}$, as well. We infer from \cite[Lem.~5.4]{KR2015} that, for each $\nu\in\MW\setminus\{\0\}$, there is a unique
$\hat\pi(\nu)\in\hX$ such that $\supp(\nu)\subseteq\supp(\nuWbar(\hat\pi(\nu)))$. Thus the map $\hat\pi: \MW\setminus\{\0\}\to \hX$ is still well-defined and continuous in our more general setting, and it satisfies $\hat\pi=\pihX_*\circ (\piGH_*)^{-1}$. We have the following version of \cite[Thm.~1a]{KR2015}.

\begin{proposition}
Let $W\subseteq H$ be relatively compact and such that $\partial W$ is nowhere dense in $H$. Assume that $\inn(\overline{W})$ is nonempty (being equivalent to $\inn(W)$ nonempty, in this case). Then
\begin{compactenum}[a)]
\item $\pihX_*:(\cG\MW,S)\to(\hX,\hT)$ is a topological almost \oneone-extension of its maximal equicontinuous factor.
\item $\hat\pi:(\MW,S)\to(\hX,\hT)$ is a topological almost \oneone-extension of its maximal equicontinuous factor.
\end{compactenum}
\end{proposition}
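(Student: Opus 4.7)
The plan is to extend the proof of \cite[Thm.~1a]{KR2015} (which treats the compact, $\inn(W)$-aperiodic case) to the present relatively compact setting. The essential new input is density of $\CW$, which Proposition~\ref{lem:dGd} guarantees under the nowhere-dense boundary assumption. The structure is standard: show that the factor map is almost \oneone, then identify the target as the maximal equicontinuous factor via the regional proximality relation.

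For part a), continuity, equivariance and surjectivity of $\pihX_*:\GMW\to\hX$ are immediate from the definition of the graph closure. To see that it is almost \oneone, fix $\hx\in\CW$ and let $(\hx,\nu)\in(\pihX_*)^{-1}\{\hx\}$. Write $(\hx,\nu)$ as a limit of graph points $(\hx_n,\nuW(\hx_n))$ with $\hx_n\to\hx$; continuity of $\nuW$ at $\hx$ then forces $\nu=\nuW(\hx)$, so the fibre is the singleton $\{(\hx,\nuW(\hx))\}$. Proposition~\ref{lem:dGd} shows $\CW$ is a dense $G_\delta$ in $\hX$, so $\pihX_*$ is almost \oneone.

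To identify $(\hX,\hT)$ as the maximal equicontinuous factor of $(\GMW,S)$, I would run the classical regional proximality argument. Take $p,p'\in\GMW$ with $\pihX_*(p)=\pihX_*(p')=\hy$, and fix any $\hx_0\in\CW$. By minimality of $(\hX,\hT)$ there exist $g_n\in G$ with $\hT_{g_n}\hy\to\hx_0$; by compactness of $\GMW$ extract a common subsequence along which $S_{g_n}p\to p_*$ and $S_{g_n}p'\to p'_*$. Both $p_*$ and $p'_*$ project to $\hx_0$ under $\pihX_*$, and since this fibre is the singleton $\{(\hx_0,\nuW(\hx_0))\}$, they coincide. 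Hence $(p,p')$ is proximal, \emph{a fortiori} regionally proximal. Since $(\hX,\hT)$ is equicontinuous, the fibres of $\pihX_*$ conversely contain the regionally proximal relation, so the two coincide and $\hX$ is the quotient by regional proximality, i.e.\ the maximal equicontinuous factor.

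Part b) runs on the same skeleton applied to $\hat\pi:\MW\to\hX$, after checking that $\hat\pi$ is defined on all of $\MW$. Under $\inn(W)\ne\emptyset$, every coset $x_H+\piH(\LL)$ is dense in $H$ and meets the open set $\inn(W)$, so $\nuW(\hx)\ne\0$ for every $\hx\in\CW$; combined with upper semicontinuity of $\nuW$ and a uniform-density estimate (as in \cite[Prop.~3.3]{KR2015}) this rules out $\0\in\MW$. For $\hx\in\CW$ and $\nu\in\hat\pi^{-1}\{\hx\}$, writing $\nu=\lim\nuW(\hx_n)$, continuity of $\hat\pi$ gives $\hx_n=\hat\pi(\nuW(\hx_n))\to\hat\pi(\nu)=\hx$ (which is legitimate because $\nu\ne\0$ forces $\nuW(\hx_n)\ne\0$ eventually), and continuity of $\nuW$ at $\hx$ then yields $\nu=\nuW(\hx)$. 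The regional proximality argument of part a) transfers verbatim. The main obstacle is the bookkeeping around non-compactness of $W$, chiefly verifying $\0\notin\MW$ and checking that upper semicontinuity of $\nuW$ at points outside $\CW$ does not spoil the singleton-fibre argument over $\CW$; once these are secured, no new ideas beyond those of \cite{KR2015} are required.
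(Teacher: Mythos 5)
Your proof is correct and follows essentially the same route as the paper, which simply observes that the arguments of \cite[Props.~3.3 and~3.5]{KR2015} carry over once Proposition~\ref{lem:dGd} supplies the density of $\CW$ --- precisely the singleton-fibre-over-continuity-points plus proximality/regional-proximality argument you spell out. The only cosmetic difference is that you prove b) directly via continuity of $\hat\pi$ rather than deducing it from a) through the projection $\piGH_*$; no new ideas are involved either way.
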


\begin{proof}
a) We can argue as in the proof of \cite[Prop.~3.5c]{KR2015}.  The assumption $\partial W$ nowhere dense guarantees that $\CW$ is a dense $G_\delta$-set by Proposition~\ref{lem:dGd}.\\
b) This follows from a) by noting that  the statements and proofs of \cite[Prop.~3.5b]{KR2015} and \cite[Prop.~3.3e]{KR2015} still apply to the present situation. 
\end{proof}

If $\partial W$ is nowhere dense, then $\MW$ and $\MWbar$ have the same unique minimal subset, and a similar result holds for the $G$-projections.
\begin{lemma}(See also \cite[Cor.~1b and Remark 3.16]{KR2015})\label{lemma:same-Min}
Let $W\subseteq H$ be relatively compact and such that $\partial W$ is nowhere dense in $H$. Then $\overline{\nuW(\CW)}=\overline{\nuWbar(\CWbar)}$
and $\overline{\nuWG(\CW)}=\overline{\nuWGbar(\CWbar)}$.
\end{lemma}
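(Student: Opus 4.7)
The plan is to reduce both claimed equalities to a single observation: the maps $\nuW$ and $\nuWbar$ agree on a common dense set of continuity points. Using the explicit description in Lemma~\ref{lem:charcw}, $\hx=x+\LL\in\CW$ precisely when $(x+\LL)_H\cap\partial W=\emptyset$. First I would note that $(\overline{W})^c\subseteq W^c$ gives $\overline{(\overline{W})^c}\subseteq\overline{W^c}$, hence $\partial\overline{W}\subseteq\partial W$; applying Lemma~\ref{lem:charcw} once more yields $\CW\subseteq\CWbar$. Next, for $\hx=x+\LL\in\CW$ the inclusion $\overline{W}\setminus W\subseteq\partial W$ together with $(x+\LL)_H\cap\partial W=\emptyset$ forces $(x+\LL)\cap(G\times W)=(x+\LL)\cap(G\times\overline{W})$, so $\nuW(\hx)=\nuWbar(\hx)$ and consequently $\nuW(\CW)=\nuWbar(\CW)$.

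Since $\partial W$ is nowhere dense, Proposition~\ref{lem:dGd} says that $\CW$ is dense in $\hX$, and via $\CW\subseteq\CWbar$ also dense in $\CWbar$. As $\nuWbar$ is by definition continuous at every point of $\CWbar$, each element of $\nuWbar(\CWbar)$ is a vague limit of elements of $\nuWbar(\CW)$, which gives $\nuWbar(\CWbar)\subseteq\overline{\nuWbar(\CW)}=\overline{\nuW(\CW)}$. Taking closures yields $\overline{\nuWbar(\CWbar)}\subseteq\overline{\nuW(\CW)}$, while the reverse inclusion is immediate from $\nuW(\CW)=\nuWbar(\CW)\subseteq\nuWbar(\CWbar)$. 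This proves the first equality.

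The $G$-projected identity is then obtained by pushing forward through the continuous map $\piG_*:\cM\to\cMG$: since $\overline{\nuW(\CW)}$ is compact, $\piG_*(\overline{\nuW(\CW)})$ is closed in $\cMG$ and equals $\overline{\piG_*(\nuW(\CW))}=\overline{\nuWG(\CW)}$, and the analogous identity holds with $\overline{W}$ in place of $W$. Applying $\piG_*$ to $\overline{\nuW(\CW)}=\overline{\nuWbar(\CWbar)}$ therefore yields $\overline{\nuWG(\CW)}=\overline{\nuWGbar(\CWbar)}$.

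The only mild subtlety is that continuity of $\nuWbar$ is a priori only guaranteed on $\CWbar$, not globally on $\hX$, so the approximating sequences have to be located inside $\CWbar$; the inclusion $\CW\subseteq\CWbar$ makes this automatic, which is why the nowhere-dense-boundary assumption is precisely what is needed to drop the compactness hypothesis present in \cite[Cor.~1b]{KR2015}. Beyond this, the argument is routine topology — no estimates in the vague or Hausdorff topology are required.
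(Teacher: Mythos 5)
Your argument is correct and follows essentially the same route as the paper's proof: $\partial\overline{W}\subseteq\partial W$ gives $\CW\subseteq\CWbar$ and $\nuW|_{\CW}=\nuWbar|_{\CW}$, density of $\CW$ plus continuity of $\nuWbar$ on $\CWbar$ gives the two inclusions, and the $G$-version follows by pushing forward with the continuous map $\piG_*$ (the paper states this last step even more tersely, but the compactness of $\MW$ you invoke is available since $W$ is relatively compact). No gaps.
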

\begin{proof}
As $\partial \overline{W}\subseteq\partial W$, we have $\CW\subseteq\CWbar$ and
$\nuW|_{\CW}=\nuWbar|_{\CW}$, because $1_{\overline{W}}(h)=1_{W}(h)$ for all $h\in H\setminus\partial W$. Hence
$\overline{\nuW(\CW)}=\overline{\nuWbar(\CW)}\subseteq\overline{\nuWbar(\CWbar)}$.
On the other hand, as $\CW$ is dense in $\hX$ and as $\nuWbar$ is continuous on $\CWbar$, we have $\nuWbar(\CWbar)\subseteq\overline{\nuW(\CW)}$. This proves the first identity. 
The second identity follows at once, because $\nuWGbar=\piG_*\circ\nuWbar$ with a continuous $\piG_*$.
\end{proof}

Now we are ready to state and prove
 the following extensions of Theorems~\ref{theo:inn-aperiodic} and~\ref{theo:inn-periodic}.

\begin{theoremIprime}\label{theo:inn-aperiodicprime}
Let $W\subseteq H$ be relatively compact and $\partial W$ be nowhere dense. Assume that $\inn(\overline{W})$ is aperiodic (so in particular non-empty). 
\begin{compactenum}[a)]
\item The topological dynamical systems $(\MW,S)$ and $(\MWG,S)$ are isomorphic, and both are almost \oneone extensions of their maximal equicontinuous factor $(\hX,\hT)$.
\item Denote by $\Gamma:\MWG\to\hX$ the factor map from a).
If $M$ is a non-empty, closed $S$-invariant subset of $\MWG$, then $(M,S)$ is an
almost \oneone extension of its maximal equicontinuous factor $(\hX,\hT)$ with factor map 
$\Gamma|_{M}$.
\end{compactenum}
\end{theoremIprime}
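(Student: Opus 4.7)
The plan is to reduce to the compact window case by applying Theorem~\ref{theo:inn-aperiodic} to the compact window $\overline W$. Nowhere-density of $\partial W$ combined with aperiodicity (and hence nonemptiness) of $\inn(\overline W)$ places us squarely in the regime of Theorem~\ref{theo:inn-aperiodic} for $\overline W$, which supplies the desired almost \oneone extension structure on $(\MWbar,S)$ and $(\MWGbar,S)$. A key geometric input from Lemma~\ref{lemma:bdW-nowhere-dense} is the identity $\overline{\inn(W)}=\overline{\inn(\overline W)}$, which will let us interchange $W$ and $\overline W$ in the relevant spots.

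For part a), the almost \oneone extension property of $\hat\pi\colon(\MW,S)\to(\hX,\hT)$ is already delivered by the Proposition immediately preceding the theorem, whose hypotheses are in force. It thus remains only to show that the continuous surjection $\piG_*\colon\MW\to\MWG$ is injective; once this is established, $\piG_*$ becomes a homeomorphism of compact Hausdorff spaces, and the almost \oneone extension structure transfers to $(\MWG,S)$ via $\Gamma:=\hat\pi\circ(\piG_*)^{-1}$. For the injectivity step, the crucial observation is that the continuous torus parametrization embeds $\MW$ inside $\widetilde{\MWbar}$, since each $\nu\in\MW$ satisfies $\nu\le\nuWbar(\hat\pi(\nu))$. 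Given $\nu,\nu'\in\MW$ with $\piG_*(\nu)=\piG_*(\nu')$, Lemma~\ref{lemma:shifted-versions} applied inside $\widetilde{\MWbar}$ yields $d\in H$ with $\nu'=\sigma_d\nu$ and $d+\SH(\nu')=\SH(\nu)$. The relatively-compact analogue of Lemma~\ref{lemma:SH-inv}d, which follows from the fact that every orbit closure in $\MW$ meets the unique minimal subsystem $\Mmin=\overline{\nuW(\CW)}$ on which $\SH$ is identically $\overline{\inn(W)}=\overline{\inn(\overline W)}$, gives $\inn(\SH(\nu))=\inn(\overline W)$ for each $\nu\in\MW$. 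Hence $d+\inn(\overline W)=\inn(\overline W)$, and aperiodicity of $\inn(\overline W)$ forces $d=0$, so $\nu=\nu'$.

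For part b), I would observe that the factor map $\Gamma$ from part a) has a singleton fiber $\{\nuWG(\hx)\}$ over each $\hx$ in the dense $G_\delta$ set $\CW$ (cf.~Proposition~\ref{lem:dGd}), and this unique preimage lies in $\overline{\nuWG(\CW)}=\MminG$. Since any nonempty closed $S$-invariant $M\subseteq\MWG$ contains $\MminG$ by Lemma~\ref{lemma:minimal-subset}c), the singleton fibers of $\Gamma$ over $\CW$ automatically lie in $M$. Hence $\Gamma|_M\colon(M,S)\to(\hX,\hT)$ is an almost \oneone factor map, and the maximality of $(\hX,\hT)$ among equicontinuous factors of $(M,S)$ descends from the corresponding maximality for the unique minimal subsystem $(\MminG,S)\subseteq(M,S)$.

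The main obstacle is the injectivity step in part a); all subsequent statements then follow mechanically. This injectivity is handled cleanly by noting that $\MW$ embeds into $\widetilde{\MWbar}$ via the torus parametrization, allowing the compact-window machinery of Section~\ref{sec:proof-1} to be invoked with $\overline W$ playing the role of $W$.
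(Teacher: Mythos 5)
Your argument is correct and follows essentially the same route as the paper's (which is only a sketch): embed $\MW$ into $\widetilde{\MWbar}$ so that Lemmas~\ref{lem:lsc}, \ref{lemma:SH-inv} and~\ref{lemma:shifted-versions} apply with $\overline{W}$ as compact window, use Lemma~\ref{lemma:bdW-nowhere-dense} and the unique minimal subset to get $\inn(\SH(\nu))=\inn(\overline{W})$ for all $\nu\in\MW$, and then rerun the proof of Theorem~\ref{theo:inn-aperiodic}. Your version in fact supplies more of the details than the paper does; the only cosmetic slip is the opening claim that Theorem~\ref{theo:inn-aperiodic} applied to $\overline{W}$ yields the extension structure on $(\MWbar,S)$ and $(\MWGbar,S)$ -- those are different spaces from $\MW$ and $\MWG$ and are never used, since you correctly invoke the Proposition of Section~\ref{sec:rcwin} for $\hat\pi$ on $\MW$ instead.
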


\begin{proof}[{\bf \em Sketch of proof of Theorem~\ref{theo:inn-aperiodicprime}}]
In Section~\ref{sec:proof-1}, the space $\widetilde{\MWbar}$ was defined such that 
$\MW\subseteq\widetilde{\MWbar}$. Hence
 Lemma~\ref{lem:lsc} applies also to $\SH|_{\MW\setminus\{\0\}}$. 
In view of Lemma~\ref{lemma:same-Min}, 
Lemma~\ref{lemma:SH-inv}c) and d) and Lemma~\ref{lemma:shifted-versions} remain valid.
Keeping in mind the above results, one readily checks that the proof of  Theorem~\ref{theo:inn-aperiodic} also applies under the above assumptions.
\end{proof}

\begin{theoremIIprime}\label{theo:inn-periodicprime}
Let $W\subseteq H$ be relatively compact and $\partial W$ be nowhere dense. 
Assume that $\inn(\overline{W})\neq\emptyset$.
Let $\hXprime=\hX/\pihX(\cH_{\inn(\overline{W})})$ with induced $G$-action $\hTprime$, 
and let $M$ be any non-empty, closed $S$-invariant subset of $\MWG$ (thus including the case $M=\MWG$).
\begin{compactenum}[a)]
\item $(\hXprime,\hTprime)$ is the maximal equicontinuous factor of
the topological dynamical system $(M,S)$.
\item If $H_{\inn(\overline{W})}=H_{\overline{W}}$, then $(M,S)$ is an almost \oneone extension of $(\hXprime,\hTprime)$.
\end{compactenum}
\end{theoremIIprime}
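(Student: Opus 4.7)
The plan is to reduce Theorem~\ref{theo:inn-periodicprime} to Theorem~\ref{theo:inn-periodic} applied to the compact window $\overline{W}$, in the spirit of the sketch of Theorem~\ref{theo:inn-aperiodicprime}. First I would collect three basic reductions: since $\partial W$ is nowhere dense and $\inn(\overline{W})\neq\emptyset$, Lemma~\ref{lemma:bdW-nowhere-dense} gives $\inn(W)\neq\emptyset$, so by Proposition~\ref{lem:dGd} both $\CW$ and $\CWbar$ are dense $G_\delta$ subsets of $\hX$. By Lemma~\ref{lemma:same-Min}, the unique minimal subsystem $\MminG=\overline{\nuWG(\CW)}=\overline{\nuWGbar(\CWbar)}$ is the same whether computed from $W$ or $\overline{W}$. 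Since $\cH_{\inn(\overline{\overline{W}})}=\cH_{\inn(\overline{W})}$, Theorem~\ref{theo:inn-periodic} applied to the compact window $\overline{W}$ tells us that $(\hXprime,\hTprime)$ is the maximal equicontinuous factor of $(\MminG,S)$, and an almost \oneone extension when $H_{\inn(\overline{W})}=H_{\overline{W}}$.

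Next I would construct a factor map $\Gamma:\MWG\to\hXprime$ along the lines of Lemma~\ref{lem:factor}. For $\nuG\in\MWG\setminus\{\0\}$, pick any $\nu\in\MW$ with $\piG_*\nu=\nuG$ and set $\Gamma(\nuG):=\rho(\hat\pi(\nu))$, where the torus map $\hat\pi:\MW\setminus\{\0\}\to\hX$ is well defined because $\MW\setminus\{\0\}\subseteq\widetilde{\MWbar}\setminus\{\0\}$, and $\rho:\hX\to\hXprime$ is the canonical projection. Well-definedness would follow from Lemma~\ref{lemma:shifted-versions} applied with the compact window $\overline{W}$: for two preimages $\nu,\nu'$ one has $\nu'=\sigma_d\nu$ with $d+\inn(\overline{W})=\inn(\overline{W})$, so $d\in H_{\inn(\overline{W})}$ and $\hat\pi(\nu')-\hat\pi(\nu)\in\pihX(\cH_{\inn(\overline{W})})=\ker\rho$. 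Continuity and $G$-equivariance of $\Gamma$ transfer verbatim from the proof of Lemma~\ref{lem:factor}b. Part a) then follows for any closed $S$-invariant $M\subseteq\MWG$: $\Gamma|_M$ presents $(\hXprime,\hTprime)$ as an equicontinuous factor of $(M,S)$, and since $\MminG\subseteq M$, every equicontinuous factor of $(M,S)$ restricts to one of $(\MminG,S)$ and is thus dominated by $(\hXprime,\hTprime)$ by the first step.

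For part b), under the hypothesis $H_{\inn(\overline{W})}=H_{\overline{W}}$, I would adapt Lemma~\ref{lem:factor}c)--d). The crucial step is that if $\hx\in\CWbar$ and $\Gamma(\nuG)=\rho(\hx)$ for some $\nuG\in M$, then $\nuG=\piG_*\nuWbar(\hx)$. Indeed, choosing $\nu\in\MW$ with $\piG_*\nu=\nuG$, the condition $\rho(\hat\pi(\nu))=\rho(\hx)$ forces $\hat\pi(\nu)\in\hx+\cH_{\inn(\overline{W})}=\hx+\cH_{\overline{W}}$, and the identity $\nuWbar(\hx+(0,h))=\sigma_h\nuWbar(\hx)$ for $h\in H_{\overline{W}}$ (Lemma~\ref{lem:quotientnu}b applied with $\overline{W}$) shows that $\CWbar$ is $\cH_{\overline{W}}$-invariant, so $\hat\pi(\nu)\in\CWbar$. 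Hence $\nu=\nuWbar(\hat\pi(\nu))=\sigma_h\nuWbar(\hx)$ for some $h\in H_{\overline{W}}$, and $\nuG=\piG_*\nuWbar(\hx)$ is uniquely determined by $\rho(\hx)$. The $G_\delta$ argument in Lemma~\ref{lem:factor}d) would carry over verbatim, using density and $G_\delta$-ness of $\CWbar$ together with its $\cH_{\overline{W}}$-invariance, to show $\rho(\CWbar)$ is a dense $G_\delta$ in $\hXprime$; minimality of $(\hXprime,\hTprime)$ would then make $\Gamma|_M$ surjective, so that $(\Gamma|_M)^{-1}\{\rho(\hx)\}$ is non-empty and hence a singleton on this dense $G_\delta$.

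The principal technical obstacle is ensuring that the containment $\MW\subseteq\widetilde{\MWbar}$, the continuity of $\hat\pi$ on $\MW\setminus\{\0\}$, and the extension of Lemma~\ref{lemma:shifted-versions} (so that the resulting $d$ belongs to the period group of $\overline{W}$ rather than of $W$) all transfer to the relatively compact setting. These were noted in the sketch of Theorem~\ref{theo:inn-aperiodicprime} but need careful verification. A further small point to address is that $\0\notin\MWG$ under the hypothesis $\inn(\overline{W})\neq\emptyset$, which can be argued via $\SH(\nu)\supseteq\overline{\inn(\overline{W})}$ for all $\nu\in\MW\subseteq\widetilde{\MWbar}$ in the spirit of Lemma~\ref{lemma:SH-inv}d).
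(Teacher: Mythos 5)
Your part a) is essentially correct and takes the same route as the paper: the paper's own proof is a two-line reduction to the compact case, and your implementation --- identifying $\MminG=\overline{\nuWGbar(\CWbar)}$ via Lemma~\ref{lemma:same-Min}, applying Theorem~\ref{theo:inn-periodic} to the compact window $\overline{W}$ to determine the maximal equicontinuous factor of the minimal subsystem, and then combining the explicitly defined factor map $\Gamma$ with the restriction-to-$\MminG$ argument --- is a legitimate and more detailed version of that reduction. The technical points you flag at the end ($\MW\subseteq\widetilde{\MWbar}$, continuity of $\hat\pi$, the $\overline{W}$-version of Lemma~\ref{lemma:shifted-versions}) are exactly those the paper settles in Section~\ref{sec:rcwin}.

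Part b), however, has a genuine gap at the step ``$\hat\pi(\nu)\in\CWbar$, hence $\nu=\nuWbar(\hat\pi(\nu))$''. The identification of a configuration with $\nuWbar$ of its torus parameter over continuity points (\cite[Prop.~3.3b]{KR2015}) applies to elements of $\MWbar$, but your $\nu$ lies in $\MW$, and $\MW\not\subseteq\MWbar$. If $\nu=\lim_n\nuW(\hx_n)$ with $\hx_n\to\hy\in\CWbar$, one only obtains $\nu\leqslant\nuWbar(\hy)$: the configuration $\nu$ may omit lattice points whose $H$-coordinate lies in $\partial W\cap\inn(\overline{W})$, a set disjoint from $\partial\overline{W}$ and hence invisible to $\CWbar$, but nonempty whenever $W$ fails to be closed inside $\inn(\overline{W})$. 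The uniqueness statement that does survive in the relatively compact setting is $\nu=\nuW(\hat\pi(\nu))$ for $\hat\pi(\nu)\in\CW$; but running your argument over $\CW$ instead fails at the next step, because the translate $\hat\pi(\nu)=\hx+(0,h)$ only satisfies $h\in H_{\overline{W}}$, while $\CW$ is invariant only under $H_W$-translations, and $H_W\subsetneq H_{\overline{W}}$ is possible.

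This is not merely presentational. Take $G=\Z$, $H=\T$, $\LL=\{(n,n\alpha):n\in\Z\}$ with $\alpha$ irrational, and $W=\T\setminus\{0\}$. Then $\partial W=\{0\}$ is nowhere dense, $\inn(\overline{W})=\T$, and $H_{\inn(\overline{W})}=H_{\overline{W}}=\T$, so the hypothesis of b) holds and $\hXprime$ is a single point; yet $\MWG$ contains $\sum_{n\in\Z}\delta_n$ together with all $\sum_{n\neq k}\delta_n$, so the unique fibre of $\Gamma$ is not a singleton and $(\MWG,S)$ is not an almost \oneone extension of $(\hXprime,\hTprime)$. So your argument for b) cannot be completed as written; note that the same obstruction affects the paper's own sketch, and a $\CW$-based argument modelled on Lemma~\ref{lem:factor}c)--d) would require a hypothesis controlling the periods of $W$ itself (for instance $H_{\inn(\overline{W})}=H_W$) rather than only those of $\overline{W}$.
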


\begin{proof}[{\bf \em Sketch of proof of Theorem~\ref{theo:inn-periodicprime}}]
Here we note that $H_{\inn(\overline{W})}$ is compact due to Lemma~\ref{lem:aper}c) and d). 
This ensures that all arguments in the proof
of Theorem~\ref{theo:inn-periodic} for compact windows directly apply to the present situation.
\end{proof}

\subsection{Measure-theoretic results}\label{subsec:mtr}

For measure-theoretic results, let us assume that $W\subseteq H$ is relatively compact and measurable.
In that situation, the map $\nuW:\hX\to\MW\subseteq\cM$ is still measurable such that the Mirsky measure is well defined, compare \cite[Rem.~3.16]{KR2015}. In fact Propositions~\ref{prop:MWfactor}, \ref{prop:factor} and \ref{prop:mirsky} continue to hold. In particular, $(\MW,\QM,S)$ is a measure-theoretic factor of $(\hX,m_\hX,\hT)$, and thus the same holds for $(\MWG,\QMG,S)$. Hence both systems have pure point dynamical spectrum.

\medskip

Whereas the statement of Proposition~\ref{prop:MWfactor} is obvious from measurability of $\nuW$, we give proofs of the other propositions for the convenience of the reader.

\begin{proof}[Proof of Proposition~\ref{prop:factor}]
Note that $P\circ \hat\pi^{-1}$ is a probability measure on $\hX$ by assumption on $P$. As $P\circ \hat\pi^{-1}$ is $\hT$-invariant and the $\hT$-action is uniquely ergodic, it 
thus equals $m_\hX$, compare Fact \ref{en:ass}\,(\ref{item2.2:4}). In particular,
$m_\hX(\nuW^{-1}\{\0\})=P\,\big((\nuW\circ\hat\pi)^{-1}\{\0\}\big)=0$, because 
$\supp(\nu)\subseteq\supp(\nuW(\hat\pi(\nu))$ for all $\nu\in\MW\setminus\{\0\}$.
But this implies $m_H(W)>0$ \cite[Prop.~3.6b]{KR2015}, which continues to hold in the non-compact setting.  The assertion of the proposition now follows as the desired factor map \cite[Def.~2.7]{EinWard2011} is provided by $\hat \pi$.
\end{proof}

\begin{proof}[Proof of Proposition~\ref{prop:mirsky}]
As $m_H(W)>0$, we have $m_\hX(\nuW^{-1}\{\0\})=0$ by \cite[Prop.~3.6b]{KR2015}, which continues to hold in the non-compact setting. Hence we have $\QM(\MW\setminus \{\0\})=m_\hX\circ(\nuW)^{-1}(\MW\setminus \{\0\})=1$. We thus can combine the statements of Proposition~\ref{prop:factor} and Proposition~\ref{prop:MWfactor} to get the result.
\end{proof}

For general relatively compact measurable windows $W$, it might be difficult to give an isomorphism between $(\MWG,\QMG,S)$ and an explicit group rotation. But the previous results for compact windows, i.e., Theorems~\ref{theo:main-2} and~\ref{theo:main-2-periodic}, continue to hold
for windows $W$ that are compact modulo $0$. Consider a window $W$ that is compact modulo $0$ and its Haar regularization $W_{reg}=\supp((m_H)|_{W})$.
Then the Mirsky measures $\QM$ and $\QMreg$ coincide on $\mathcal M$, because
\begin{equation*}
\left\{\hx\in\hX:\nuWreg(\hx)\neq\nuW(\hx)\right\}
\subseteq
\pihX\left(\bigcup_{\ell\in\LL}\left((G\times(W_{reg}\triangle W))-\ell\right)\right)\ ,
\end{equation*}
which is a set of $m_\hX$-measure zero as $\LL$ is countable. This implies that $(\MW,\QM,S)$ is measure-theoretically isomorphic to $(\MWreg,\QMreg,S)$. 
As the Haar periods of $W_{reg}$ coincide with those of $W$,
Theorems~\ref{theo:main-2} and~\ref{theo:main-2-periodic}, which apply to $W_{reg}$, continue to hold for $W$.

\medskip

In oder to better understand the passage from the extended hull $\MWG$ to the usual hull $\MWG(\hx)$ for model sets without compact windows, we  discuss the maximal density condition in that case.

\begin{remark}(Generic configurations)\label{rem:genconf}\\
Assume that some configuration $\nuWG(\hx)$ has maximal density in the sense that $d(\nuWG(\hx))=\mathrm{dens}(\LL)\cdot m_H(\overline{W})$ along some given tempered van Hove sequence $(A_n)_n$, compare also Section~\ref{sec:proof-2-3}. 
In that case $\nuWG(\hx)$ is generic for the Mirsky measure $\QMGbar$
on $\mathcal M^{\scriptscriptstyle G}$, as will be shown below. This implies that $(\MWG(\hx), \QMGbar, S)$ is isomorphic to $(\widetilde{\MWGbar}, \QMGbar, S)$, where $\widetilde{\MWGbar}=\{\nu\in\cMG: \nu\leqslant\nuWGbar(\hx)\text{ for some }\hx\in\hX\}$, compare the proof of \cite[Thm.~5]{KR2015}.

To see genericity, note that  for any $\hx\in\hX$ we have $0\leqslant\nuWG(\hx)\leqslant\nuWGbar(\hx)$, so in particular the density of $\nuWG(\hx)$ along $(A_n)_n$ is bounded by that of $\nuWGbar(\hx)$ and hence by $\mathrm{dens}(\LL)\cdot m_H(\overline{W})$, see \cite[Thm.~3]{KR2015}.
If $\nuWG(\hx)$ achieves this maximal density, then 
the density of $\nuWGbar(\hx)-\nuWG(\hx)$ is clearly zero. 
Now consider
for $\nuWG(\hx)\in\MWG$ its empirical measures
\begin{displaymath}
Q^{G,n}_{W,\hx}
:=
\frac{1}{m_G(A_n)}\int_{A_n}\delta_{S_{g}\nuWG(\hx)}\,dm_G(g)\ , 
\end{displaymath}
and likewise for $\nuWGbar(\hx)\in \MWGbar$ its empirical measures $Q^{G,n}_{\overline{W},\hx}$ .
The previous reasoning shows that the empirical measures
$Q_{W, \hx}^{G,n}$
 are asymptotically equivalent to the measures $Q_{\overline{W}, \hx}^{G,n}$ 
in the sense that both sequences do have the same weak limit points, and \cite[Thm.~5]{KR2015} implies that the measures $Q_{W, \hx}^{G,n}$ 
converge weakly to $\QMGbar$. 
It follows that statistical properties of $\nuWG(\hx)$ and $\nuWGbar(\hx)$, like pattern frequencies and especially their autocorrelation coefficients, coincide for such $\hx$, and that they are determined by the Mirsky measure $\QMGbar$, to which Theorems~\ref{theo:main-2} and~\ref{theo:main-2-periodic} apply.

Note, however, that for $m_{\hX}$-a.a $\hx$ the configuration $\nuW(\hx)$ has density
$\mathrm{dens}(\LL)\cdot m_H({W})$, see \cite[Thm. 1]{Moody2002}. Thus the above reasoning applies to $\QMG$-Mirsky-typical configurations only, if $m_H(W)=m_H(\overline{W})$, i.e., if $W$ is compact modulo $0$.

\end{remark}

\begin{remark}(Autocorrelation measure, diffraction spectrum and generic configurations) \\
For $\PG\in \mathcal{M}_S(\widetilde{\MWGbar})$, the set of $S$-invariant probability measures on $\widetilde{\MWGbar}$, the so-called autocorrelation measure $\gamma_{\PG}$ is a positive definite Radon measure on $G$ that is naturally associated to $\PG$. In particular, $\gamma_{\PG}$ is Fourier transformable. 
We recall its definition from \cite[Prop.~6]{BaakeLenz2004}, \cite[Sec.~2.3]{LenzRichard07} and \cite[Lemma~4.1]{LenzStrungaru09}. Take any $\psi\in C_c(G)$ such that $m_G(\psi)=1$ and define the Radon measure $\gamma_{\PG}$ via its associated linear functional $\gamma_{\PG}:C_c(G)\to\mathbb C$ by requiring
\begin{displaymath}
\gamma_{\PG}(\varphi):= 
\int_{\widetilde{\MWGbar}} 
\left(\int_G {\int_G {\varphi(s-t)\psi(t)} d \nu(s)} d \nu(t) \right)
d \PG(\nu) 
\end{displaymath}
for every $\varphi\in C_c(G)$. 
The measure $\gamma_{\PG}\in \cMG$ is independent of the choice of $\psi$, and it is a rather direct consequence of the above definition that the map $\gamma: \mathcal{M}_S(\widetilde{\MWGbar})\to\cMG$, defined by $\PG \mapsto \gamma_{\PG}$, is continuous with respect to the vague topologies.
For $\PG:= \QMG$, the Mirsky measure on $\widetilde{\MWGbar}$\,, the Fourier transform $\widehat{\gamma_{\QMG}}$ is a pure point measure as
$(\widetilde{\MWGbar}, \QMG,S)$ has pure point dynamical spectrum \cite[Thm.~7]{BaakeLenz2004}. One says that $(\widetilde{\MWGbar}, \QMG,S)$ has pure point diffraction spectrum.

Now fix any configuration $\nuG\in \widetilde{\MWGbar}$ and consider its empirical measures $Q^{n}_{\nuG}$ on $\widetilde{\MWGbar}$, given as
\begin{displaymath}
Q^{n}_{\nuG}
:=
\frac{1}{m_G(A_n)}\int_{A_n}\delta_{S_{g}\nuG}\,dm_G(g) \ , 
\end{displaymath}
along some fixed tempered van Hove sequence $(A_n)_n$ in $G$. As $\widetilde{\MWGbar}$ is compact, we can assume that the associated sequence $(\gamma_{Q_{ \nuG}^{n}})_n$ of empirical autocorrelations converges to a limit $\gamma_{\nuG}$, possibly after passing to some subsequence of $(A_n)_n$. For sufficiently large $n$,  the Fourier transform of  $\gamma_{\nuG}$  describes the outcome of a diffraction experiment on a physical realisation of $\nuG$ restricted to $A_n$. A standard tedious calculation which we omit yields the explicit expression $\gamma_{\nuG}=\sum_{\ell \in \LL}\eta(\ell)\delta_{\ell_G}$, where
\begin{displaymath}
\eta(\ell)=\lim_{n\to\infty} \frac{1}{m_G(A_n)} \mathrm{card}\big(\mathrm{supp}(\nuG)\cap (\ell_G+\mathrm{supp}(\nuG))\cap A_n\big) \ge 0
\end{displaymath}
are the autocorrelation coefficients of $\gamma_{\nuG}$.
In particular, if $\nuG$ is generic for the Mirsky measure $\QMG$ on $\widetilde{\MWGbar}$, i.e., if $(Q_{\nuG}^{n})_n$ converges to  $\QMG$, then by continuity of $\gamma$ we have  $\gamma_{\nuG}=\gamma_{\QMG}$. By the above argument, this implies that $\nuG$ is pure point diffractive. In particular,  from \cite[Eqn.~(18)]{KR2015} we obtain explicit expressions for the autocorrelation coefficients, namely $\eta(\ell)=\mathrm{dens}(\LL)\cdot m_H(W\cap (W+\ell_H))$ for all $\ell$ such that $\eta(\ell)\ne0$.
 
The above result alternatively follows by combining Theorem 5 and Proposition 6 in \cite{BaakeLenz2004}. That there is a  full $\QMG$-measure set of configurations $\nuWG(\hx)\in (\nuWG)^{-1}(\hX)\subseteq \MWG$ such that $\gamma_{\nuWG(\hx)}=\gamma_{\QMG}$ on any given tempered van Hove sequence $(A_n)_n$ has been shown by Moody \cite[Cor.~1]{Moody2002}, by constructing a full $m_\hX$-measure set of pure point diffractive $\nuWG(\hx)$ via repeated applications of Birkhoff's ergodic theorem.

A sufficient criterion for pure point diffractiveness has been observed in \cite{BaakeHuckStrungaru15, KR2015}: The configuration $\nuWG(\hx)$ is pure point diffractive if $\nuWG(\hx)$ has maximal density $d(\nuWG(\hx))=\mathrm{dens}(\LL)\cdot m_H(\overline{W})$. Indeed, in that case $\nuWG(\hx)$ is generic for the Mirsky measure $Q_{\scriptscriptstyle \overline{W}}^{\scriptscriptstyle G}$ on $\widetilde{\MWGbar}$ by Remark~\ref{rem:genconf} above. Note however that such $\nuWG(\hx)$ is typically  not generic for the Mirsky measure $\QMG$ on $\widetilde{\MWGbar}$.

\end{remark}

\end{document}